\newtheorem{theorem}{Theorem}[section]
\newtheorem{corollary}[theorem]{Corollary}
\newtheorem{lemma}[theorem]{Lemma}
\theoremstyle{definition}
\newtheorem{definition}[theorem]{Definition}
\newtheorem{question}[theorem]{Question}
\theoremstyle{remark}
\newtheorem{remark}[theorem]{Remark}
\newtheorem{example}[theorem]{Example}
\numberwithin{equation}{section}
\def\R {{\mathbb{R}}}
\def\N{{\mathbb{N}}}
\def\Z {{\mathbb{Z}}}
\def\3{{|\!|\!|}}
\def\coef{{\mathrm{coef}}}
\begin{document}

\title{On equivalence relations generated by Schauder bases}
\author{Longyun Ding}
\address{School of Mathematical Sciences and LPMC, Nankai University, Tianjin, 300071, P.R.China}
\email{dinglongyun@gmail.com}
\thanks{Research partially supported by the National Natural Science Foundation of China (Grant No. 11371203).}

\subjclass[2010]{Primary 03E15, 46B15, 46B45}

\date{\today}

\begin{abstract}
In this paper, a notion of Schauder equivalence relation $\R^\N/L$ is introduced, where $L$ is a linear subspace of $\R^\N$ and the unit vectors form a Schauder basis of $L$. The main theorem is to show that the following conditions are equivalent:
\begin{enumerate}
\item[(1)] the unit vector basis is boundedly complete;
\item[(2)] $L$ is $F_\sigma$ in $\R^\N$;
\item[(3)] $\R^\N/L$ is Borel reducible to $\R^\N/\ell_\infty$.
\end{enumerate}

We show that any Schauder equivalence relation generalized by basis of $\ell_2$ is Borel bireducible to $\R^\N/\ell_2$ itself, but it is not true for bases of $c_0$ or $\ell_1$. Furthermore, among all Schauder equivalence relations generated by sequences in $c_0$, we find the minimum and the maximum elements with respect to Borel reducibility.

We also show that $\R^\N/\ell_p$ is Borel reducible to $\R^\N/J$ iff $p\le 2$, where $J$ is James' space.
\end{abstract}
\maketitle

\section{Introduction}

The notion of Borel reducibility becomes a tool to compare objects or problems from different branches of mathematics. In recent years, many equivalence relations concerning Banach space theory were investigated. One motivation of this paper is the Borel reducibility among equivalence relations $\R^\N/\ell_p$ and $\R^\N/c_0$. It is proved by Dougherty and Hjorth: for $p,q\ge 1$,
$$\R^\N/\ell_p\le_B\R^\N/\ell_q\iff p\le q,$$
while $\R^\N/\ell_p$ and $\R^\N/c_0$ are Borel incomparable (see \cite{DH} and \cite{hjorth}). These results were generalized via different methods by several authors (see, e.g., \cite{matrai} and \cite{ding2}). In this paper, we study equivalence relations of the form $\R^\N/L$, where $L$ is a linear subspace of $\R^\N$. Moreover, the class of of subspaces $L$ we focus on in this paper can be specified by the following two equivalent conditions:
\begin{enumerate}
\item[(i)] there is a sequence $(x_n)$ of none-zero elements in a Banach space $X$ such that
$$L=\coef(X,(x_n))\stackrel{\rm Def}{=}\{a\in\R^\N:\sum_na(n)x_n\mbox{ converges in }X\}.$$
\item[(ii)] the unit vectors $e_n=(0,0,\cdots,0,\stackrel{n}{1},0,\cdots)$ form a Schauder basis of $L$.
\end{enumerate}
If one, and thus all of above conditions hold for $L$, we call $\R^\N/L$ a Schauder equivalence relation.

Schauder equivalence relations were already studied in different disguises. For example, it is obvious that all $\R^\N/\ell_p\;(p\ge 1)$ and $\R^\N/c_0$ are Schauder equivalence relations. Most recently, equivalence relations $\R^{\N\times\N}/\ell_p(\ell_q)$ were considered by Gao and Yin \cite{GY}. We can easily see that, for any $p,q\ge 1$, $\R^{\N\times\N}/\ell_p(\ell_q)$ is Borel bireducible to a Schauder equivalence relation. With a continuous function $f:[0,1]\to\R^+$, M\'atrai \cite{matrai} defined a relation ${\bf E}_f$ on $[0,1]^\N$. Borel reducibility between equivalence relations of the form ${\bf E}_f$ were investigated in \cite{matrai}. Yin \cite{yin} generalized M\'atrai's results to show that the partial order structure $P(\omega)/{\rm Fin}$ can be embedded into the set of these ${\bf E}_f$'s equipped with the partial ordering of Borel reducibility. In fact, as noted in Yin \cite{yin}, any such ${\bf E}_f$ appeared in \cite{yin} is Borel bireducible to a Schauder equivalence relation $\R^\N/L$, where $L$ is an Orlicz sequence space.

The main theorem of this paper is the following:

\begin{theorem}\label{main}
If the unit vectors $(e_n)$ form a Schauder basis of a Banach space $L$. Then the following are equivalent:
\begin{enumerate}
\item[(1)] $(e_n)$ is boundedly complete basis;
\item[(2)] $L$ is $F_\sigma$ in $\R^\N$;
\item[(3)] $\R^\N/L\le_B\R^\N/\ell_\infty$.
\end{enumerate}
\end{theorem}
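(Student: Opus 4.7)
The plan is to establish four implications: $(1)\Rightarrow(2)$ and $(1)\Rightarrow(3)$ by direct constructions from the Banach-space structure on $L$, and $(2)\Rightarrow(1)$ and $(3)\Rightarrow(1)$ by a common argument based on the Bessaga--Pelczynski selection principle applied to a non-boundedly-complete basis.

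For $(1)\Rightarrow(2)$, consider
$$F_k=\Bigl\{a\in\R^\N:\sup_N\Bigl\|\sum_{n=1}^N a(n)e_n\Bigr\|_L\le k\Bigr\}.$$
Each $F_k$ is closed in the product topology, because each map $a\mapsto\|\sum_{n\le N}a(n)e_n\|_L$ is continuous on $\R^\N$ (depending only on finitely many coordinates). Bounded completeness forces $F_k\subseteq L$, while every $a\in L$ has bounded partial sums, so $L=\bigcup_k F_k$ is $F_\sigma$. For $(1)\Rightarrow(3)$, fix a countable norming sequence $(x_k^*)_k\subseteq L^*$ and define $\phi:\R^\N\to\R^\N$ by $\phi(a)(\langle k,N\rangle)=x_k^*(\sum_{n\le N}a(n)e_n)$. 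Then $\phi$ is continuous and linear, $\|\phi(a)\|_\infty=\sup_N\|\sum_{n\le N}a(n)e_n\|_L$, and by $(1)$ this is finite exactly when $a\in L$; hence $\phi(x)-\phi(y)=\phi(x-y)\in\ell_\infty$ iff $x-y\in L$, a Borel reduction.

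For the converses, assume $(1)$ fails. By the Bessaga--Pelczynski selection principle, there is a block basic sequence $u_k=\sum_{n\in I_k}\alpha_n e_n$ of $(e_n)$, with disjoint finite intervals $(I_k)$, equivalent to the unit vector basis of $c_0$. Define $\iota:\R^\N\to\R^\N$ by $\iota(b)(n)=b_k\alpha_n$ for $n\in I_k$ and $\iota(b)(n)=0$ elsewhere. Then $\iota$ is continuous and linear, and the $c_0$-equivalence of $(u_k)$ yields that $\sum_k b_k u_k$ converges in $L$ iff $b\in c_0$; hence $\iota^{-1}(L)=c_0$, so $\iota$ is a continuous linear reduction of $\R^\N/c_0$ to $\R^\N/L$. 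From here, $(2)\Rightarrow(1)$ is immediate: were $L$ $F_\sigma$ in $\R^\N$, so would $c_0=\iota^{-1}(L)$ be, contradicting the standard fact that $c_0$ is properly $\Pi^0_3$ in $\R^\N$. For $(3)\Rightarrow(1)$, composing $\iota$ with the reduction from $(3)$ yields $\R^\N/c_0\le_B\R^\N/\ell_\infty$, which must be ruled out using the $F_\sigma$-ness of $\ell_\infty$ together with the properly $\Pi^0_3$ character of $c_0$.

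I expect the main obstacle to be this last step in $(3)\Rightarrow(1)$. Passing from a general Borel reduction $\R^\N/c_0\le_B\R^\N/\ell_\infty$ to a contradiction requires pushing the $F_\sigma$-ness of $\ell_\infty$ back through a merely Borel reduction --- which, unlike the continuous linear $\iota$ used in $(2)\Rightarrow(1)$, does not immediately transport Borel-class information. The argument will therefore need either a canonical-form result for reductions out of Polishable subgroups (in the spirit of Solecki's and Hjorth's work on $F_\sigma$-equivalence relations) or a direct Baire-category argument tailored to the $\ell_\infty$-target, that recovers the $F_\sigma$-ness of the pullback $\iota^{-1}(L)=c_0$ and thereby repeats the contradiction used for $(2)\Rightarrow(1)$.
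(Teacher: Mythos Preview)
Your $(1)\Rightarrow(2)$ matches the paper's. Your $(1)\Rightarrow(3)$ via a norming sequence is valid and in fact cleaner than the paper's construction, which instead defines $\theta(\alpha)(k)=\min\{m:(\alpha+B_m)\cap U_k\ne\emptyset\}$ for a countable base $(U_k)$ of $\R^\N$ and checks by hand that this is a reduction to $\R^\N/\ell_\infty$.

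The genuine gap is in the common step you use for both converses. What you attribute to the Bessaga--Pelczynski selection principle is really James' theorem, and it requires the basis to be \emph{unconditional}: if $(e_n)$ is an unconditional basis that is not boundedly complete, then some block basis is equivalent to the unit vector basis of $c_0$. For conditional bases this fails outright. Take $L=J$ (James' space) or $L={\rm bv}_0$: in each case the unit vector basis is not boundedly complete, yet the space contains no isomorphic copy of $c_0$ (James' space has separable bidual; ${\rm bv}_0$ is isomorphic to $\ell_1$, which has the Schur property), so no block basis of $(e_n)$ can be equivalent to the $c_0$ basis and your map $\iota$ does not exist. The paper in fact proves $\R^\N/c_0\not\le_B\R^\N/{\rm bv}_0$ and $\R^\N/c_0\not\le_B\R^\N/J$, so even the \emph{conclusion} of your intermediate step is false in these examples. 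The obstacle you flagged at the end (transporting $F_\sigma$-ness back through a merely Borel reduction) is real, but it is not the first place the argument breaks.

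The paper handles both converses without ever producing a $c_0$-embedding. For $(2)\Rightarrow(1)$ it argues directly by Baire category: writing $L=\bigcup_m F_m$ with each $F_m$ closed in $\R^\N$, it views $L$ as a complete space in its own norm, finds some $F_m$ with nonempty interior in $L$, and uses the resulting ball to show that any $a\in\R^\N$ with bounded partial sums lies in $L$. For $(3)\Rightarrow(1)$ it fixes a single witness $a$ to non-bounded-completeness (bounded partial sums, divergent series), invokes a technical lemma (Lemma~3.1) to make a given Borel reduction $\theta$ continuous on a carefully structured closed subset of $\R^\N$ built from $a$, and then runs a second Baire-category argument on that subset to force $a\in L$, a contradiction. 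Neither step assumes unconditionality.
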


In is well known that $\R^\N/\ell_p\sim_B[0,1]^\N/\ell_p$ and $\R^\N/c_0\sim_B[0,1]^\N/c_0$. We generalize these results to the following:
\begin{theorem}\label{cube1}
Let $(x_n)$ be a symmetric basis of Banach space $X$. Then
$$E(X,(x_n))\sim_B[0,1]^\N/\coef(X,(x_n)).$$
\end{theorem}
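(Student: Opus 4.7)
The direction $[0,1]^\N/\coef(X,(x_n))\le_B E(X,(x_n))$ is provided by the inclusion $[0,1]^\N\hookrightarrow\R^\N$, which preserves and reflects $L$-equivalence for $L=\coef(X,(x_n))$. For the reverse direction, the plan is as follows. After an equivalent renorming of $X$ I may assume that $(x_n)$ is $1$-symmetric, so that $\|x_n\|=1$ and, for any finitely supported $(c_k)$, $\|\sum_k c_k x_k\|$ depends only on the multiset $\{|c_k|\}$. In particular $L$ is a solid, symmetric sequence space whose elements are null sequences, and $L$ is closed under permutations, sign changes, and extraction of subsequences (followed by reindexing to $\N$).

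Fix a bijection $\sigma\colon\N\times\{+,-\}\times\N\to\N$ and define the continuous \emph{staircase encoding} $\varphi\colon\R^\N\to[0,1]^\N$ by
\[
\varphi(a)\bigl(\sigma(n,+,k)\bigr)=\max\bigl(0,\min(1,a(n)-k)\bigr),\quad
\varphi(a)\bigl(\sigma(n,-,k)\bigr)=\max\bigl(0,\min(1,-a(n)-k)\bigr),
\]
for $n,k\in\N$. Non-negative values of $a(n)$ are encoded in the $+$-track, non-positive values in the $-$-track. The claim to verify is that $a-b\in L \iff \varphi(a)-\varphi(b)\in L$ for all $a,b\in\R^\N$.

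Write $d_n=a(n)-b(n)$. A case analysis on the signs of $a(n),b(n)$ shows that the entries of $\varphi(a)-\varphi(b)$ in the $n$-th block (both tracks, all $k$) lie in $[-1,1]$ and have absolute values summing to $|d_n|$, with at most $O(1+|d_n|)$ nonzero positions; whenever $|d_n|\ge 1$, at least one such entry has absolute value $\ge 1/2$. For the direction $\varphi(a)-\varphi(b)\in L\Rightarrow (d_n)\in L$: since $L$-sequences are null, only finitely many $n$ can have $|d_n|\ge 1$; past that finite set each block contributes at most two nonzero entries $v_n,w_n$ with $|v_n|+|w_n|=|d_n|$. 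The subsequences $(v_n)_n,(w_n)_n$ of the $L$-sequence $\varphi(a)-\varphi(b)$ themselves lie in $L$ (by $1$-symmetry plus unconditionality), so do $(|v_n|)_n,(|w_n|)_n$, hence their sum $(|d_n|)_n\in L$, and finally $(d_n)\in L$ by unconditionality. The converse $(d_n)\in L\Rightarrow \varphi(a)-\varphi(b)\in L$ is easier: $|d_n|\to 0$ forces blocks into the at-most-two-entry regime, with entries dominated in absolute value by $|d_n|$, and a further appeal to $1$-symmetry/unconditionality gives the conclusion.

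The main technical obstacle is the careful reduction of the block multiset of $\varphi(a)-\varphi(b)$ to the scalar sequence $(d_n)$: the staircase can split the ``mass'' of a single $d_n$ between the two sign tracks (when $a(n),b(n)$ have opposite signs) and between two adjacent positions (when they straddle an integer). The $1$-symmetric structure of $(x_n)$ is indispensable for reassembling these fragments into a single $L$-sequence in one direction, and for justifying that the split pieces lie in $L$ exactly when $(d_n)$ does in the other.
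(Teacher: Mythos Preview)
Your proof is correct and follows essentially the same approach as the paper's Lemma~6.1: both use the staircase encoding $t\mapsto\max(0,\min(1,t-k))$ (the paper indexes by $k\in\Z$, you split into $\pm$-tracks over $k\in\N$, which is a cosmetic reparametrization), and both establish the equivalence via the same combination of unconditionality (for sign changes and coordinate restriction/domination) and symmetry (for reindexing along injections). Your case split by $|d_n|<1$ versus $|d_n|\ge 1$ is a minor variant of the paper's partition into $I_0,I_1,I_2$ according to $|[a(k)]-[b(k)]|\in\{0,1,\ge 2\}$.
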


Reducibility and nonreducibility between Schauder equivalence relations generated by different sequences of same space are considered, especially in case that the generating sequences are Schauder bases. In this paper, we mainly focus on bases of three special Banach spaces: $\ell_2$, $c_0$, and $\ell_1$. 

\begin{theorem}
For any basis $(y_k)$ of $\ell_2$, we have $E(\ell_2,(y_k))\sim_B\R^\N/\ell_2$.
\end{theorem}

In contract, for $c_0$, we construct special bases $(x^m_n)$ for each $m\ge 1$ and $m=\infty$, and denote ${\rm cs}^{(m)}=\coef(c_0,(x^m_n))$. For $m=1$, we have
$${\rm cs}^{(1)}=\{a\in\R^\N:\sum_na(n)\mbox{ converges}\}.$$
We show that
\begin{theorem}
\begin{enumerate}
\item[(1)] For $m\ge 1$, we have
$$\R^\N/c_0<_B\R^\N/{\rm cs}^{(m)}\le_B\R^\N/{\rm cs}^{(m+1)}<_B\R^\N/{\rm cs}^{(\infty)}.$$
\item[(2)] Let $(x_n)$ be a none-zero sequence in $c_0$. Then
$$\R^\N/c_0\le_BE(c_0,(x_n))\le_B\R^\N/{\rm cs}^{(\infty)}.$$
\end{enumerate}
\end{theorem}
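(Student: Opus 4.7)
The theorem groups into three tasks: (A) the chain of $\le_B$-reductions in (1), (B) the two strict inequalities in (1), and (C) the two inequalities in (2). My overall strategy is to obtain all $\le_B$ statements by writing down explicit block-linear Borel maps between the coefficient spaces, and to obtain the two $\not\le_B$ statements by a descriptive-set-theoretic argument in the style of Dougherty--Hjorth; these non-reductions I expect to be the main obstacle.

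For (A), the reduction $\R^\N/{\rm cs}^{(m)}\le_B\R^\N/{\rm cs}^{(m+1)}$ and $\R^\N/{\rm cs}^{(m)}\le_B\R^\N/{\rm cs}^{(\infty)}$: since the bases $(x^m_n)$, $(x^{m+1}_n)$, $(x^\infty_n)$ are constructed so that the next refines or amalgamates the previous, I would read off the reduction by re-expanding each $x^m_n$ in the target basis. This yields a block-linear map $T_m\colon\R^\N\to\R^\N$ that is a bijection of ${\rm cs}^{(m)}$ onto a linear image inside ${\rm cs}^{(m+1)}$ (resp.\ ${\rm cs}^{(\infty)}$) and serves as the Borel reduction; the verification is routine provided the relationship between the bases is used correctly.

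For (B), the reduction $\R^\N/c_0\le_B\R^\N/{\rm cs}^{(1)}$ is by the alternating doubling map $\phi(a)(2k-1)=a(k),\;\phi(a)(2k)=-a(k)$, whose partial sums oscillate between $0$ and $a(k)$ and therefore converge iff $a\in c_0$; composing with $T_m$ extends the reduction into ${\rm cs}^{(m)}$. The hard part is the non-reductions. For $\R^\N/{\rm cs}^{(m)}\not\le_B\R^\N/c_0$, the approach is to exhibit a Borel invariant (an averaging or asymptotic-density invariant applied to a putative reduction) that ${\rm cs}^{(m)}$ respects but $c_0$ cannot; alternatively, one may use Theorem~\ref{main} to compare the two equivalence relations through their $\R^\N/\ell_\infty$-reducibility classes, exploiting the different positions of the summing and unit bases relative to bounded completeness. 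For $\R^\N/{\rm cs}^{(\infty)}\not\le_B\R^\N/{\rm cs}^{(m)}$, I would diagonalize against $m$ using a family of test sequences living at level $m+1$ in the refinement chain.

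For (C), the upper bound $E(c_0,(x_n))\le_B\R^\N/{\rm cs}^{(\infty)}$ follows from the universality of $(x^\infty_n)$ already used in (A): expanding each $x_n$ in the $(x^\infty_n)$-basis gives a linear embedding of $\coef(c_0,(x_n))$ into ${\rm cs}^{(\infty)}$, which is the required reduction. For the lower bound $\R^\N/c_0\le_B E(c_0,(x_n))$, adapt the alternating encoding: for each $n$ choose a coordinate $k_n$ with $x_n(k_n)\ne 0$ and a scalar $\lambda_n>0$ so that $\pm\lambda_n x_n$ together produce telescoping partial sums in $c_0$ whose convergence tracks precisely whether $a\in c_0$. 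A Bessaga--Pelczy\'nski subsequence extraction reduces the general case to one where the supports of the $x_n$ are almost disjoint, where the encoding works transparently; the full case is then handled by gluing the disjoint-support blocks together and absorbing the bounded overlap into the Borel reduction.
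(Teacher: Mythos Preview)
Your handling of the $\le_B$ reductions is sound, and the alternating map for $\R^\N/c_0\le_B\R^\N/{\rm cs}$ is exactly what the paper uses. The two strict inequalities, however, both have genuine gaps.

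For $\R^\N/{\rm cs}^{(m)}\not\le_B\R^\N/c_0$, invoking Theorem~\ref{main} cannot work: neither the unit vector basis of $c_0$ nor the summing basis is boundedly complete, so \emph{both} $\R^\N/c_0$ and $\R^\N/{\rm cs}^{(m)}$ fail to reduce to $\R^\N/\ell_\infty$, and the theorem does not separate them. Your ``averaging/density invariant'' is too vague to carry weight. The paper instead uses Theorem~\ref{unconditional}: the unit vector basis of $c_0$ is unconditional, while every subsequence of the summing basis has coefficient space exactly ${\rm cs}$ and is therefore conditional; hence no reduction exists. For $\R^\N/{\rm cs}^{(\infty)}\not\le_B\R^\N/{\rm cs}^{(m)}$, ``diagonalizing at level $m+1$'' misses the actual obstruction, which is a finite-versus-infinite-dimensional phenomenon. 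The paper's Lemma~\ref{PD} rests on perfectly divergent series: by Dvoretzky's theorem $c_0$ contains a perfectly divergent series with terms tending to $0$, whereas by Dvoretzky--Hanani no finite-dimensional space does. Feeding such a series through the additive-reduction Lemma~\ref{useful2} yields $(c_0)^\N/{\rm cs}(c_0)\not\le_B(\R^m)^\N/{\rm cs}(\R^m)$. Nothing in your sketch supplies this ingredient.

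For the lower bound in (2), your Bessaga--Pe\l czy\'nski disjoint-support extraction handles only the case where the normalized sequence $(x_n)$ has no norm-convergent subsequence. If instead $x_n\to x\ne 0$ in $c_0$, the supports cannot be made almost disjoint and your encoding breaks down. The paper treats this as a separate case: one then has $\coef(c_0,(x_{n_k}))={\rm cs}$ for a fast-converging subsequence, so $\R^\N/c_0\le_B\R^\N/{\rm cs}\le_B E(c_0,(x_n))$. For the upper bound in (2), note that expanding an arbitrary $x_n$ in the basis $(x^\infty_n)$ does not directly give a map defined on all of $\R^\N$; the paper's route is cleaner, sending $a\mapsto(a(k)x_k)_k\in(c_0)^\N$ and using $(c_0)^\N/{\rm cs}(c_0)\sim_B\R^\N/{\rm cs}^{(\infty)}$.
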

While for $\ell_1$, we construct a basis $(y^1_n)$ with
$$\coef(\ell_1,(y^1_n))={\rm bv}_0\stackrel{\rm Def}{=}c_0\cap\{a\in\R^\N:\sum_n|a(n)-a(n+1)|<+\infty\},$$
and prove $\R^\N/\ell_1<_B\R^\N/{\rm bv}_0<_B\R^\N/\ell_1\otimes\R^\N/c_0$,
where $\otimes$ is the direct product operator between equivalence relations.

We also compare $\R^\N/\ell_p$ and $\R^\N/J$ where $J$ is James' space and get
$$\R^\N/\ell_p\le_B\R^\N/J\iff p\le 2.$$

The paper is organized as follows. In section 2 we recall some notions in descriptive set theory and functional analysis, and introduce two kind of equivalence relaitons. In section 3 we prove Theorem \ref{main}. In section 4 we prove an useful lemma for converting a Borel reduction to an additive reduction. In section 5 we focus on Schauder equivalence relations generated by bases of $\ell_2$, $c_0$, and $\ell_1$. In section 6 we prove Theorem \ref{cube1} and compare $\R^\N/\ell_p$ and $\R^\N/J$. Finally section 7 contains some further remarks and open problems.

\section{Preliminaries}

A {\it Polish space} is a separable completely metrizable topological space. Let $E$ and $F$ be equivalence relations on Polish spaces $X$ and $Y$ respectively. A Borel function $\theta:X\to Y$ is called a {\it Borel reduction} from $E$ to $F$ if, for any $x,y\in X$,
$$(x,y)\in E\iff(\theta(x),\theta(y))\in F.$$
We say $E$ is {\it Borel reducible} to $F$, denoted $E\le_B F$, if there exists a Borel reduction from $E$ to $F$. If both $E\le_BF$, $F\le_BE$ hold, we say $E$ and $F$ are {\it Borel bireducible}, denoted $E\sim_B F$. We also denote $E\le_B F$ and $F\not\le_B E$ as $E<_B F$. We refer to \cite{gaobook} and \cite{kanovei} for background of Borel reducibility.

A sequence $(x_n)$ in a Banach space $X$ is called a {\it Schauder basis} (or basis, for the sake of brevity) of $X$ if, for any $x\in X$, there is a unique sequence $a\in\R^\N$ such that $x=\sum_na(n)x_n$. Let $(x_n)$ be a Schauder basis of $X$. Define $P_n:X\to X$ as $P_n(\sum_na(n)x_n)=\sum_{i\le n}a(n)x_n$. Then all $P_n$ are bounded and the {\it basis constant} $\sup_n\|P_n\|<+\infty$. It follows that, there is a sequence $(x^*_n)$ of bounded linear functional on $X$, such that $x=\sum_nx^*_n(x)x_n$. We call $(x_n^*)$ the {\it biorthogonal functionals} associated to $(x_n)$.

Let $(x_n)$ be a basis of $X$. We say $(x_n)$ is {\it unconditional} if, for any permutation $\pi:\N\to\N$, the sequence $(x_{\pi(n)})$ is also a basis. A basis $(x_n)$ is said to be {\it boundedly complete} if, for every sequence $a\in\R^\N$ such that $\sup_n\|\sum_{i\le n} a(i)x_i\|<\infty$, the series $\sum_na(n)x_n$ converges.

Let $(x_n)$ be a sequence of none-zero elements in a Banach space $X$. The closed linear span of $\{x_n:n\in\N\}$ is denoted by $[x_n]_{n\in\N}$. We denote
$$\coef(X,(x_n))=\{a\in\R^\N:\sum_na(n)x_n\mbox{ converges}\},$$
and for $a\in\coef(X,(x_n))$, we define
$$\3 a\3=\sup_n\|\sum_{i\le n}a(i)x_i\|.$$
By Cauchy criterion, it is routine to check that $(\coef(X,(x_n)),\3\cdot\3)$ is a Banach space, and the unit vectors $e_n=(0,0,\cdots,0,\stackrel{n}{1},0,\cdots)$ form a basis of it. From the definition of $\coef(X,(x_n))$, we can easily see that, if $X$ is a closed subspace of $Y$, then $\coef(X,(x_n))=\coef(Y,(x_n))$.

A sequence $(x_n)$ is called {\it normalized} if $\|x_n\|=1$ for each $n$, and is called {\it semi-normalized} if there are $A\ge B>0$ such that $A\ge\|x_n\|\ge B$ for each $n$. It is easy too see that, for a semi-normalized sequence $(x_n)$ of $X$, we always have $\ell_1\subseteq\coef(X,(x_n))\subseteq c_0$.

We say two sequences $(x_n)$ and $(y_n)$ of $X$ are {\it equivalent} if $\coef(X,(x_n))=\coef(X,(y_n))$. A basis $(x_n)$ of $X$ is said to be {\it symmetric} if, for any permutation $\pi:\N\to\N$, $(x_{\pi(n)})$ is equivalent to $(x_n)$. All symmetric basis are actually unconditional.

\begin{definition}
Let $(x_n)$ be a sequence in a Banach space $X$. We define an equivalence relation on $\R^\N$ as $E(X,(x_n))=\R^\N/\coef(X,(x_n))$, i.e., for all $a,b\in\R^\N$,
$$(a,b)\in E(X,(x_n))\iff a-b\in\coef(X,(x_n)).$$
We call this kind of equivalence relations {\it Schauder equivalence relations}.
\end{definition}

A none-zero sequence $(x_n)$ of a Banach space $X$ is said to be a {\it basic sequence} if it is a basis of $[x_n]_{n\in\N}$.

Let $(x_n)$ be a basis of $X$, $(r_n)$ a sequence of real numbers, and $0=n_0<n_1<\cdots$ an strictly increasing natural numbers. If for every $k$, $u_k=\sum_{n=n_k}^{n_{k+1}-1}r_nx_n$ is not $0$, we call sequence $(u_k)$ a {\it block basis} of $(x_n)$. A block basis is no necessarily a basis, but is always a basic sequence. A simple reduction $\theta$ witnesses that $E(X,(u_k))\le_BE(X,(x_n))$ defined as, for any $a\in\R^\N$ and $n\in\N$, $\theta(a)(n)=a(k)r_n$ for $n_k\le n<n_{k+1}$. For any sequence $(r_n)$ of non-zero numbers, we always have $E(X,(x_n))\sim_BE(X,(r_nx_n))$. Therefore, we may assume any basis is normalized when we need.

Let $(x_n)$ be a semi-normalized basis of $X$, $(r_n)$ a sequence of positive real numbers with $\sum_nr_n<+\infty$. Since $\ell_1\subseteq\coef(X,(x_n))$, we have
$$E(X,(x_n))\sim(\prod_nr_n\Z)/\coef(X,(x_n)).$$
A desired Borel reduction $\theta$ defined as $\theta(a)(n)=r_n[a(n)/r_n]$ for $a\in\R^\N$ and $n\in\N$.

Let $E$ and $F$ be two equivalence relations on $X$ and $Y$ respectively. Denote $E\otimes F$ the equivalence relation on $X\times Y$ as
$$((x_1,y_1),(x_2,y_2))\in E\otimes F\iff(x_1,x_2)\in E\;\&\;(y_1,y_2)\in F$$
for $x_1,x_2\in X$ and $y_1,y_2\in Y$. Let $(x_n)$ be a sequence in $X$ and $(y_n)$ a sequence in $Y$. We denote $z_{2k}=x_k$ and $z_{2k+1}=y_k$ for $k\in\N$. It is easy to see that
$$E(X\oplus Y,(z_n))\sim_B E(X,(x_n))\otimes E(Y,(y_n)).$$
So we may think $E(X,(x_n))\otimes E(Y,(y_n))$ is still a Schauder equivalence relation. Furthermore, if $(x_n)$ and $(y_n)$ are Schauder bases on $X$ and $Y$ respectively, we can see that $(z_n)$ is also a basis on $X\oplus Y$.

A sequence $(X_n)$ of closed subspaces of a Banach space $X$ is called a {\it Schauder decomposition} of $X$ if every $x\in X$ has a unique representation of the form $x=\sum_nx_n$, with $x_n\in X_n$ for each $n$. Similar to Schauder bases, define $P_n:X\to X$ as $P_n(\sum_nx_n)=\sum_{i\le n}x_i$ where $x_n\in X_n$ for each $n$. Then all $P_n$ are bounded and the {\it decomposition constant} $\sup_n\|P_n\|<+\infty$.

\begin{definition}
Let $(X_n)$ be a Schauder decomposition of a separable Banach space $X$. We define an equivalence relation $E(X,(X_n))$ on $\prod_nX_n$ as
$$(\alpha,\beta)\in E(X,(X_n))\iff\sum_n(\alpha(n)-\beta(n))\mbox{ converges in }X$$
for any $\alpha,\beta\in\prod_nX_n$. We call this kind of equivalence relations {\it decomposition equivalence relations}.

Furthermore, if all these $X_n$ are finite dimensional, we call $E(X,(X_n))$ an {\it F.D.D. equivalence relation}.
\end{definition}

For more details for Schauder bases and Schauder decompositions, we refer to \cite{LT}. A tiny difference on notation with \cite{LT} is, in this paper, any sequence $(x_n)$ means $(x_0,x_1,\cdots)$, not $(x_1,x_2,\cdots)$.

\section{$F_\sigma$ Schauder equivalence relations}

In the light of Rosendal's Theorem that any $K_\sigma$ equivalence relation on a Polish space is Borel reducible to $\R^\N/\ell_\infty$ (see \cite{rosendal}), we compare $F_\sigma$ Schauder equivalence relations and $\R^\N/\ell_\infty$.

The following lemma will be used to convert a Borel reduction to a continuous reduction.

\begin{lemma}\label{useful1}
Denote $D=\{d\in\R^\N:\forall n(4^nd(n)\in\Z)\}$. Let $G$ be a dense $G_\delta$ set in $D$, $a\in\R^\N$ with $2^na(n)\in\Z$ for each $n\in\N$, and let $-1=n_0<n_1<\cdots<n_k<\cdots$. Then there exist $b^*\in D$ and a strictly increasing sequence of natural numbers $(k_l)$ with $k_0=0$ such that $G\supseteq C$, where
$$C=\{d\in D:\forall l\exists i\le 2^l\forall n\in(n_{k_l},n_{k_{l+1}}](d(n)=b^*(n)+ia(n)/2^l)\}.$$
\end{lemma}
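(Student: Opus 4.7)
The plan is a fusion / Baire-category construction. Write $G=\bigcap_m G_m$ with each $G_m$ open dense in $D$. For any choice of $b^*$ and $(k_l)$ the set $C$ is naturally homeomorphic to the Cantor-like space $\prod_l\{0,1,\ldots,2^l\}$ via $(i_l)_l\mapsto d$ where $d(n)=b^*(n)+i_la(n)/2^l$ on $(n_{k_l},n_{k_{l+1}}]$. I will build $(k_l)$ and $b^*$ by induction on $l$ so that after stage $l$ every partial branch $p_{(i_0,\ldots,i_l)}$ determines a basic clopen set $U_{p_{(i_0,\ldots,i_l)}}\subseteq G_l$. Then every $d\in C$ lies in such a set for every $l$, hence in $G$.

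Before starting the induction I record the arithmetic constraint that $ia(n)/2^l\in 4^{-n}\Z$ requires $n\ge l$ (since $2^na(n)\in\Z$). Consequently the shifts I will use are well defined on $D$ only on coordinates $n\ge l$, which is guaranteed on $n>n_{k_l}$ provided $n_{k_l}\ge l-1$. I therefore ensure $k_l\ge l$ throughout the construction.

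The inductive step is the core of the argument. Assume $k_0<\cdots<k_l$ and $b^*$ on $[0,n_{k_l}]$ are already chosen so that the partial paths $p_{\vec{i}}$ indexed by $\vec{i}=(i_0,\ldots,i_{l-1})$ satisfy $U_{p_{\vec{i}}}\subseteq G_0\cap\cdots\cap G_{l-1}$. For each such $\vec{i}$ and each $i_l\in\{0,\ldots,2^l\}$ I define a continuous map $\phi_{\vec{i},i_l}:D\to D$ by $\phi_{\vec{i},i_l}(d)(n)=p_{\vec{i}}(n)$ for $n\le n_{k_l}$ and $\phi_{\vec{i},i_l}(d)(n)=d(n)+i_la(n)/2^l$ for $n>n_{k_l}$. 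Since $\phi_{\vec{i},i_l}$ depends only on the tail of $d$, it factors as the tail projection $\pi_>:D\to D_>:=\prod_{n>n_{k_l}}4^{-n}\Z$ followed by a homeomorphism $D_>\to U_{p_{\vec{i}}}$. Hence $\phi_{\vec{i},i_l}^{-1}(G_l)=\pi_>^{-1}(V_{\vec{i},i_l})$ with $V_{\vec{i},i_l}\subseteq D_>$ open and dense. The finite intersection $\bigcap_{\vec{i},i_l}V_{\vec{i},i_l}$ is then open dense in $D_>$ and contains a basic clopen set $\{d_>:d_>\upharpoonright(n_{k_l},N]=c\}$. I pick $k_{l+1}\ge l+1$ with $n_{k_{l+1}}\ge N$, extend $c$ arbitrarily (but staying in $D_>$) to $(n_{k_l},n_{k_{l+1}}]$, and declare $b^*$ on that interval to be this extension. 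A routine translation check then gives $U_{p_{(\vec{i},i_l)}}\subseteq G_l$ for every new index, closing the induction.

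The hard part is exactly this inductive step: the single shared parameter $b^*\upharpoonright(n_{k_l},n_{k_{l+1}}]$ must simultaneously push all $\prod_{j\le l}(1+2^j)$ new branches into $G_l$, even though distinct branches require different shifts $i_la(n)/2^l$. The factorization through $\pi_>$ resolves this: it recasts the differently shifted conditions as open dense subsets of one common space $D_>$, so their finite intersection is still dense and admits a single basic clopen witness. When the induction is complete, the intervals $(n_{k_l},n_{k_{l+1}}]$ partition $\N$, so $b^*$ is defined on all of $\N$, and the observation in the first paragraph delivers $C\subseteq G$.
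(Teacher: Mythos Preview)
Your proposal is correct and follows the same fusion/Baire-category strategy as the paper's proof: write $G=\bigcap_m O_m$, and at stage $m$ extend $b^*$ and choose $k_{m+1}$ so that every partial branch determines a basic clopen set contained in $O_m$. The only organizational difference is that the paper enumerates the finitely many tuples $(i_0,\ldots,i_m)$ sequentially and extends $b^*$ a little further for each one, whereas you handle them all at once by pulling the conditions back to open dense subsets of the tail space $D_>$ and intersecting; this is a cosmetic repackaging of the same argument, not a genuinely different route.
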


\begin{proof}
Assume that $O_0\supseteq O_1\supseteq\cdots\supseteq O_l\supseteq\cdots$ be a sequence of dense open sets with $G=\bigcap_{l}O_l$. We will construct $b^*$ and $(k_m)$ by induction on $m$ such that $O_m\supseteq C_m$, where
$$C_m=\{d\in D:\forall l\le m\exists i\le 2^l\forall n\in(n_{k_l},n_{k_{l+1}}](d(n)=b^*(n)+ia(n)/2^l)\}.$$
When we finish the construction, we shall have
$$C=\bigcap_{m}C_m\subseteq\bigcap_mO_m=G.$$

First, for $m=0$, fix a $d^0_0\in O_0$. Since $O_0$ is open, there is $n^0_0$ such that $O_0\supseteq\{d\in D:\forall n\le n^0_0(d(n)=d^0_0(n))\}$. Set $b^*(n)=d^0_0(n)$ for $n\le n^0_0$. Denote
$$N^0_1=\{d\in D:\forall n\le n^0_0(d(n)=d^0_0(n)+a(n))\}.$$
Since $O_0$ is dense, there is a $d^0_1\in N^0_1\cap O_0$. Then we can find an $n^0_1\ge n^0_0$ such that $O_0\supseteq\{d\in D:\forall n\le n^0_1(d(n)=d^0_1(n))\}$.
Select $k_1$ such that $n_{k_1}\ge n^0_1$ and set $b^*(n)=d^0_1(n)-a(n)$ for $n^0_0<n\le n_{k_1}$. Now we denote
$$C_0=\{d\in D:\exists i\in\{0,1\}\forall n\le n_{k_1}(d(n)=b^*(n)+ia(n))\}.$$
Then $C_0\subseteq O_0$.

Secondly, assume that we have defined $k_1,\cdots,k_m$ and the value of $b^*(n)$ for $n\le n_{k_m}$. Let $s_0,s_1,\cdots,s_J$ be an enumeration of the following set
$$\{(i_0,\cdots,i_m):\forall l\le m(0\le i_l\le 2^l)\}.$$
We inductively find a sequence $n^m_0<n^m_1<\cdots<n^m_J$ as follows.

Denote
$$N^m_0=\{d\in D:\forall l<m\forall n\in(n_{k_l},n_{k_{l+1}}](d(n)=b^*(n)+s_0(l)a(n)/2^l)\}.$$
Since $O_m$ is dense, there is $d^m_0\in N^m_0\cap O_m$. Then since $O_m$ is open, we can find an $n^m_0\ge n_{k_m}$ such that $O_m\supseteq\{d\in D:\forall n\le n^m_0(d(n)=d^m_0(n))$. Set $b^*(n)=d^m_0(n)-s_0(m)a(n)/2^m$ for $n_{k_m}<n\le n^m_0$.

Further assume that $n^m_j$ and the value of $b^*(n)$ for $n\le n^m_j$ have been defined.

If $j<J$, denote by $N^m_{j+1}$ the set of all $d\in D$ satisfying
$$d(n)=\left\{\begin{array}{ll}b^*(n)+s_{j+1}(l)a(n)/2^l, & n\in(n_{k_l},n_{k_{l+1}}]\mbox{ for }l<m,\cr
b^*(n)+s_{j+1}(m)a(n)/2^m, & n_{k_m}<n\le n^m_j\end{array}\right.$$
By the same reason, we can find $d^m_{j+1}\in N^m_{j+1}\cap O_m$ and $n^m_{j+1}\ge n^m_j$ with $O_m\supseteq\{d\in D:\forall n\le n^m_j(d(n)=d^m_{j+1}(n))\}$. Then set $b^*(n)=d^m_{j+1}-s_{j+1}(m)a(n)/2^m$ for $n^m_j<n\le n^m_{j+1}$.

If $j=J$, select a $k_{m+1}$ such that $n_{k_{m+1}}\ge n^m_J$. Set $b^*(n)=d^m_J(n)-s_J(m)a(n)/2^m$ for $n^m_J<n\le n_{k_{m+1}}$. It easy to see $O_m\supseteq C_m$ as desired.
\end{proof}

The next theorem is slightly more general then Theorem \ref{main}.

Recall that a Schauder decomposition $(X_n)$ of a Banach space $X$ is called {\it boundedly complete} if, for every sequence $\alpha\in\prod_nX_n$ such that $\sup_n\|\sum_{i\le n} \alpha(i)\|<+\infty$, the series $\sum_n\alpha(n)$ converges (see \cite{LT}).

\begin{theorem}\label{decomposition}
Let $(X_n)$ be a Schauder decomposition of a separable Banach space $X$. Then the following are equivalent:
\begin{enumerate}
\item[(1)] $(X_n)$ is boundedly complete;
\item[(2)] ${\rm cs}(X,(X_n))\stackrel{\rm Def}{=}\{\alpha\in\prod_nX_n:\sum_n\alpha(n)\mbox{ converges}\}$ is $F_\sigma$ in $\prod_nX_n$;
\item[(3)] $E(X,(X_n))\le_B\R^\N/\ell_\infty$.
\end{enumerate}
\end{theorem}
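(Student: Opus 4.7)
The plan is to show $(1)\!\Rightarrow\!(2)$ and $(1)\!\Rightarrow\!(3)$ directly, then close the cycle via a single construction yielding $\neg(1)\!\Rightarrow\!\neg(2)\wedge\neg(3)$.

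For $(1)\!\Rightarrow\!(2)$, bounded completeness of $(X_n)$ gives
$${\rm cs}(X,(X_n))=\bigcup_{M\in\N}\left\{\alpha\in\prod_nX_n:\sup_n\left\|\sum_{i\le n}\alpha(i)\right\|\le M\right\},$$
each set closed in the product topology, so ${\rm cs}(X,(X_n))$ is $F_\sigma$. For $(1)\!\Rightarrow\!(3)$, separability of $X$ lets us fix a countable weak-$*$ dense sequence $(f_k)$ in the closed unit ball of $X^*$, and define $\theta:\prod_nX_n\to\R^{\N\times\N}$ by $\theta(\alpha)(n,k)=f_k\bigl(\sum_{i\le n}\alpha(i)\bigr)$. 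Then $\theta$ is continuous and linear, and weak-$*$ density gives $\theta(\alpha)-\theta(\beta)\in\ell_\infty$ iff $\sup_n\|\sum_{i\le n}(\alpha(i)-\beta(i))\|<\infty$, which by bounded completeness is equivalent to $\alpha-\beta\in{\rm cs}(X,(X_n))$; identifying $\R^{\N\times\N}\cong\R^\N$ yields the Borel reduction.

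For the converse, suppose $(X_n)$ is not boundedly complete and fix $\alpha_0\in\prod_nX_n$ with $M:=\sup_n\|\sum_{i\le n}\alpha_0(i)\|<\infty$ but the partial sums $(\sum_{i\le n}\alpha_0(i))$ not Cauchy. Extract $-1=n_0<n_1<\cdots$ so that the blocks $u_k=\sum_{n_k<i\le n_{k+1}}\alpha_0(i)$ satisfy $\|u_k\|\ge\delta>0$; telescoping gives $\|\sum_{k\le K}u_k\|\le 2M$, so $(u_k)$ is a semi-normalized block basic sequence with uniformly bounded partial sums. Apply the Bessaga--Pelczynski selection principle to replace $(u_k)$ by a subsequence equivalent to the unit vector basis of $c_0$. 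Now define the linear continuous map $\phi:\R^\N\to\prod_nX_n$ by $\phi(a)(n)=a(k)\alpha_0(n)$ for $n_k<n\le n_{k+1}$. A direct partial-sum inspection (using $\|u_k\|\ge\delta$, the $c_0$-equivalence of $(u_k)$, and the bound $2M|a(k)|$ on intermediate tail sums within block $k$) shows $\phi(a)\in{\rm cs}(X,(X_n))$ iff $a\in c_0$. Hence $\phi$ is a continuous additive reduction from $\R^\N/c_0$ to $E(X,(X_n))$.

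Both contradictions follow: if ${\rm cs}$ were $F_\sigma$ then $c_0=\phi^{-1}({\rm cs})$ would be $F_\sigma$ in $\R^\N$, contradicting the classical fact that $c_0$ is not $F_\sigma$; and if $E(X,(X_n))\le_B\R^\N/\ell_\infty$ then composition gives $\R^\N/c_0\le_B\R^\N/\ell_\infty$, contradicting the Dougherty--Hjorth incomparability result \cite{DH}. The main obstacle is the Bessaga--Pelczynski step, since uniformly bounded forward partial sums do not automatically imply weak unconditional Cauchyness, which is what is needed to extract a $c_0$-equivalent subsequence of $(u_k)$. I expect Lemma \ref{useful1} to play a role precisely here: rather than always routing through $\R^\N/c_0$, one applies its Baire-category construction on the lattice $D\subseteq\R^\N$ directly to any hypothetical Borel reduction $E(X,(X_n))\le_B\R^\N/\ell_\infty$ (and to any hypothetical $F_\sigma$-decomposition of ${\rm cs}$), using the bad sequence $\alpha_0$ to produce the contradiction.
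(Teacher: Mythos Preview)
Your forward implications are fine. The argument for $(1)\Rightarrow(2)$ matches the paper's, and your $(1)\Rightarrow(3)$ via a weak-$*$ dense sequence in $B_{X^*}$ is a clean linear alternative to the paper's reduction $\theta(\alpha)(k)=\min\{m:(\alpha+B_m)\cap U_k\ne\emptyset\}$; both work, yours is arguably more transparent.

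The converse is where the proposal breaks, and the break is not a technical wrinkle but a genuine obstruction. You are right to distrust the Bessaga--Pelczynski step: uniformly bounded \emph{forward} partial sums of a basic block sequence $(u_k)$ do not give weak unconditional Cauchyness, and without wuC there is no $c_0$-extraction. Concretely, the unit vector basis of James' space $J$ (or of ${\rm bv}_0$) has bounded partial sums and is not boundedly complete, yet $J$ contains no copy of $c_0$; the paper even proves $\R^\N/c_0\not\le_B\R^\N/J$. So the route ``embed $\R^\N/c_0$ and quote $c_0\notin F_\sigma$ / Dougherty--Hjorth'' is blocked in general, not merely unproven. Note also that replacing ``$\phi^{-1}({\rm cs})=c_0$'' by ``$\phi^{-1}({\rm cs})=\coef(X,(u_k))$ is not $F_\sigma$'' is circular, since that is exactly the statement $(2)\Rightarrow(1)$ for the block sequence.

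The paper handles the two converse directions separately. For $(2)\Rightarrow(1)$ it does \emph{not} use Lemma~\ref{useful1}: it runs a Baire category argument inside $X$ itself. Writing ${\rm cs}=\bigcup_m F_m$ with $F_m$ closed and pulling back along the (continuous, injective) map $S:X\to\prod_n X_n$, $S(x)=(P_n x-P_{n-1}x)$, some $S^{-1}(F_m)$ has interior in $X$; translating and scaling the witness $\alpha_0$ into that ball and passing to the limit of truncations forces $\alpha_0\in{\rm cs}$. For $(3)\Rightarrow(1)$ your instinct is correct: the paper applies Lemma~\ref{useful1} to upgrade a Borel reduction to a continuous one on a structured closed set $C\subseteq D$, then intersects with a single coset of ${\rm cs}$ and runs a second Baire category argument on that intersection to push $\alpha_0$ back into ${\rm cs}$. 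No $c_0$-embedding is used (or available) anywhere.
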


\begin{proof} Define $S:X\to\prod_nX_n$ as $S(x)=(x_n)$ for $x=\sum_nx_n$ with each $x_n\in X_n$. Because all projections $P_n$ of the decomposition $(X_n)$ are bounded, we see $S$ is a continuous injection whose range is ${\rm cs}(X,(X_n))$. Let $M$ be the decomposition constant $\sup_n\|P_n\|$. For $m\ge 1$, we denote
$$\begin{array}{ll}B_m&=\{\alpha\in\prod_iX_i:\sup_n\|\sum_{i\le n}\alpha(i)\|\le m\}\cr &=\bigcap_n\{\alpha\in\prod_iX_i:\|\sum_{i\le n}\alpha(i)\|\le m\}.\end{array}$$
Then $B_m$ is closed in $\prod_nX_n$.

(1)$\Rightarrow$(2). From the definition of boundedly complete decomposition, we have
$${\rm cs}(X,(X_n))=\bigcup_mB_m,$$
so ${\rm cs}(X,(X_n))$ is $F_\sigma$.

(2)$\Rightarrow$(1). Assume that ${\rm cs}(X,(X_n))=\bigcup_mF_m$ with each $F_m$ closed in $\prod_nX_n$. Then each $S^{-1}(F_m)$ is closed in $X$. By Baire category theorem, there exits an $m$ such that $S^{-1}(F_m)$ has an inner point. So there exist $y^\#\in X$ and $r>0$ such that
$$S^{-1}(F_m)\supseteq B(y^\#,r)=\{x\in X:\|x-y^\#\|\le r\}.$$

Now for any sequence $\alpha\in\prod_nX_n$ with $\sup_n\|\sum_{i\le n}\alpha(i)\|<+\infty$, without loss of generality, we may assume that $\sup_n\|\sum_{i\le n}\alpha(i)\|\le r$. For each $j\in\N$, we define $\alpha_j\in\prod_nX_n$ as
$$\alpha_j(n)=\left\{\begin{array}{ll}\alpha(n),& n\le j,\cr 0,& n>j.\end{array}\right.$$
Then $\alpha_j\in{\rm cs}(X,(X_n))$ and $\|S^{-1}(\alpha_j)\|\le r$. Therefore, for $j\in\N$, we have $y^\#+S^{-1}(\alpha_j)\in B(b_0,r)\subseteq S^{-1}(F_m)$, i.e., $S(y^\#)+\alpha_j\in F_m$. Note that, in $\prod_nX_n$, $\lim_{j\to\infty}\alpha_j\to\alpha$. Since $F_m$ is closed in $\prod_nX_n$, we have $S(y^\#)+\alpha\in F_m\subseteq{\rm cs}(X,(X_n))$. Hence $\alpha\in{\rm cs}(X,(X_n))$.

(1)$\Rightarrow$(3). Let $\{U_k:k\in\N\}$ be a basis for the topology of $\prod_nX_n$. For $\alpha\in\prod_nX_n$ and $k\in\N$, since $\bigcup_{m}(\alpha+B_m)=\alpha+{\rm cs}(X,(X_n))$ is dense in $\prod_nX_n$, there are some $m$ such that $(\alpha+B_m)\cap U_k\ne\emptyset$. So we can define
$$\theta(\alpha)(k)=\min\{m:(\alpha+B_m)\cap U_k\ne\emptyset\}.$$
It is easy to see that $\theta:\prod_nX_n\to\R^\N$ is Borel.

For $\alpha,\beta\in\prod_nX_n$, if $(\alpha,\beta)\in E(X,(X_n))$, let $\|S^{-1}(\alpha-\beta)\|\le K/M$ with $K\in\N$. Then $\alpha-\beta\in B_K$, so for each $m$, $\alpha+B_m\subseteq\beta+B_{m+K}$, and $\beta+B_m\subseteq\alpha+B_{m+K}$. It follows that $|\theta(\alpha)(k)-\theta(\beta)(k)|\le K$ for each $k$, and hence $\theta(\alpha)-\theta(\beta)\in\ell_\infty$. On the other hand, if $(\alpha,\beta)\notin E(X,(X_n))$, then
$$(\alpha+{\rm cs}(X,(X_n)))\cap(\beta+{\rm cs}(X,(X_n)))=\emptyset.$$
Thus for each $l\ge 1$, we have $(\alpha+B_1)\cap\bigcup_{m\le l}(\beta+B_m)=\emptyset$. We can find a $k$ such that $\alpha+B_1$ meets $U_k$, and $(\beta+B_m)\cap U_k=\emptyset$ for all $m\le l$, so $|\theta(\alpha)(k)-\theta(\beta)(k)|\ge l$. It follows that $\theta(\alpha)-\theta(\beta)\notin\ell_\infty$.

Therefore, $\theta$ is a Borel reduction of $E(X,(X_n))$ to $\R^\N/\ell_\infty$.

(3)$\Rightarrow$(1). Assume for contradiction that there exists an $\alpha\in\prod_nX_n$ with $\sup_n\|\sum_{i\le n}\alpha(i)\|<+\infty$, but $\alpha\notin{\rm cs}(X,(X_n))$. By Cauchy criterion, there exist an $\varepsilon_0>0$ and a sequence $-1=n_0<n_1<\cdots<n_k<\cdots$ such that $\|\sum_{n=n_k+1}^{n_{k+1}}\alpha(n)\|\ge\varepsilon_0$. Denote $D=\{d\in\R^\N:\forall n(4^nd(n)\in\Z)\}$. Without loss of generality, we may assume that $\alpha(n)\ne 0$ for each $n$, otherwise, we may replace $\alpha$ by $\alpha+\gamma$ for a suitable $\gamma\in{\rm cs}(X,(X_n))$. Define $T:D\to\prod_nX_n$ as $T(d)(n)=d(n)\alpha(n)$ for any $d\in D$ and $n\in\N$. It is clear that $T$ is a homeomorphic embedding.

Suppose $\theta$ is a Borel reduction of $E(X,(X_n))$ to $\R^\N/\ell_\infty$. Then $\theta\circ T$ is also Borel on $D$. There exists a dense $G_\delta$ subset $G\subseteq D$ such that $\theta\circ T$ is continuous on $G$ (cf. \cite[(8.38)]{kechris}). Then $\theta$ is also continuous on $T(G)$. Applying Lemma \ref{useful1} with $a(n)=1$ for each $n$, There exist $b^*\in D$ and a strictly increasing sequence $(k_l)$ with $k_0=0$ such that $G\supseteq C$, where
$$C=\{d\in D:\forall l\exists i\le 2^l\forall n\in(n_{k_l},n_{k_{l+1}}](d(n)=b^*(n)+i/2^l)\}.$$

Let $C^*=T(C)\cap(T(b^*)+{\rm cs}(X,(X_n)))$. Note that $T(C)$ is closed in $\prod_nX_n$, so $C^*$ is relatively closed in $T(b^*)+{\rm cs}(X,(X_n))$. By the definition of Borel reduction, $C^*=(\theta\upharpoonright T(C))^{-1}(\theta(T(b^*))+\ell_\infty)$. So $C^*$ is $F_\sigma$ in $T(C)$, since $\theta$ is continuous on $T(C)$ and $\ell_\infty$ is $F_\sigma$ in $\R^\N$. Thus we can assume that $C^*=\bigcup_mF_m$ with each $F_m$ closed in $T(C)$.

Now denote $Z=S^{-1}(C^*-T(b^*)),Z_m=S^{-1}(F_m-T(b^*))$. Then $Z$ is closed in $X$, thus complete. Because each $Z_m$ is closed in $Z$ with $\bigcup_mZ_m=Z$, there exists an $m$ such that $Z_m$ has an inner point in $Z$. Thus there exist $y^\#\in Z$ and $r>0$ such that
$$Z_m\supseteq W=\{x\in Z:\|x-y^\#\|\le r\}.$$
Let $y^\#=\sum_ny_n$ with $y_n\in X_n$, by Cauchy criterion, we have
$$\lim_{k\to\infty}\|\sum_{n=n_k+1}^{n_{k+1}}y_n\|=0.$$
Since $S(y^\#)\in(C^*-T(b^*))\subseteq(T(C)-T(b^*))$, we have $T^{-1}(S(y^\#))\in(C-b^*)$. So there is a sequence $(i_l)$ such that, for $n_{k_l}<n\le n_{k_{l+1}}$, $T^{-1}(S(y^\#))(n)=i_l/2^l$, i.e., $y_n=S(y^\#)(n)=(i_l/2^l)\alpha(n)$. Thus
$$\lim_{l\to\infty}\frac{i_l}{2^l}\|\sum_{n=n_{k_l}+1}^{n_{k_l+1}}\alpha(n)\|=0.$$
Comparing with $\|\sum_{n=n_{k_l}+1}^{n_{k_l+1}}\alpha(n)\|\ge\varepsilon_0$, we get $\lim_{l\to\infty}i_l/2^l=0$. Now fix a large enough natural number $L$ such that $i_l/2^l<1/2$ for $l\ge L$, and
$$\frac{1}{2^L}\sup_n\|\sum_{i\le n}\alpha(i)\|\le\frac{r}{2}.$$
We define
$$\alpha'(n)=\left\{\begin{array}{ll}0, & n\le n_{k_L},\cr \alpha(n), & n>n_{k_L},\end{array}\right.$$
and for each $j>L$,
$$\alpha'_j(n)=\left\{\begin{array}{ll}0, & n\le n_{k_L}\mbox{ or }n>n_{k_j},\cr \alpha(n), & n_{k_L}<n\le n_{k_j}.\end{array}\right.$$
It is clear that $T(b^*)+S(y^\#)+a'/2^L\in T(C)$ and $T(b^*)+S(y^\#)+a'_j/2^L\in C^*$ for each $j$. Note that $S^{-1}(\alpha'_j/2^L)=1/2^L\sum_{n=n_{k_L}+1}^{n_{k_j}}\alpha(n)$ and
$$\frac{1}{2^L}\|\sum_{n=n_{k_L}+1}^{n_{k_j}}\alpha(n)\|\le\frac{1}{2^L}\left(\|\sum_{i\le n_{k_L}}\alpha(i)\|+\|\sum_{i\le n_{k_j}}\alpha(i)\|\right)\le r.$$
It follows that $y^\#+S^{-1}(\alpha'_j/2^L)\in W\subseteq Z_m=S^{-1}(F_m-T(b^*))$, i.e., $T(b^*)+S(y^\#)+\alpha'_j/2^L\in F_m$. Since $F_m$ is closed in $T(C)$, we have
$$T(b^*)+S(y^\#)+\alpha'/2^L=\lim_{j\to\infty}(T(b^*)+S(y^\#)+\alpha'_j/2^L)\in F_m\subseteq C^*.$$
From the definition of $C^*$, we have $S(y^\#)+\alpha'/2^L\in{\rm cs}(X,(X_n))$, so $\alpha'\in{\rm cs}(X,(X_n))$. Hence $\alpha\in{\rm cs}(X,(X_n))$, a contradiction!
\end{proof}

Indeed, the proof of (3)$\Rightarrow$(1) shows that, if $(X_n)$ is not boundedly complete, then $E(X,(X_n))$ is not Borel reducible to any $F_\sigma$ equivalence relation.

Theorem \ref{main} is equivalent to the following corollary.

\begin{corollary}\label{fsigma}
Let $(x_n)$ be a Schauder basis of a Banach space $X$. Then the following are equivalent:
\begin{enumerate}
\item[(1)] $(x_n)$ is boundedly complete;
\item[(2)] $\coef(X,(x_n))$ is $F_\sigma$ in $\R^\N$;
\item[(3)] $E(X,(x_n))\le_B\R^\N/\ell_\infty$.
\end{enumerate}
\end{corollary}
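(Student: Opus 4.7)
The plan is to deduce Corollary \ref{fsigma} directly from Theorem \ref{decomposition} by specializing to the trivial one-dimensional Schauder decomposition induced by the basis $(x_n)$.

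First, I would set $X_n = \mathbb{R}x_n$ for each $n$. Since $(x_n)$ is a Schauder basis of $X$, every $x \in X$ has a unique representation $x = \sum_n a(n)x_n$, which is to say a unique representation $x = \sum_n \alpha(n)$ with $\alpha(n) = a(n)x_n \in X_n$. Thus $(X_n)$ is a Schauder decomposition of $X$, and its decomposition projections are precisely the basis projections $P_n$ associated to $(x_n)$.

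Next, I would construct the natural identification $\Phi : \R^\N \to \prod_n X_n$ given by $\Phi(a)(n) = a(n)x_n$. Since each $x_n$ is non-zero, the coordinate map $a(n) \mapsto a(n)x_n$ is a linear homeomorphism from $\R$ onto $X_n$, so $\Phi$ is a homeomorphism of Polish groups. Under this identification one checks the following direct correspondences:
\begin{itemize}
\item $\Phi(\coef(X,(x_n))) = \mathrm{cs}(X,(X_n))$, since $\sum_n a(n)x_n$ converges in $X$ iff $\sum_n \alpha(n)$ converges with $\alpha = \Phi(a)$;
\item $\Phi$ is an isomorphism between $E(X,(x_n))$ and $E(X,(X_n))$;
\item the condition $\sup_n \|\sum_{i\le n} a(i)x_i\| < \infty$ reads exactly as $\sup_n \|\sum_{i\le n}\alpha(i)\| < \infty$, so $(x_n)$ is boundedly complete as a basis iff $(X_n)$ is boundedly complete as a decomposition.
\end{itemize}

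With these translations in hand, each of conditions (1), (2), (3) of Corollary \ref{fsigma} becomes, via $\Phi$, the corresponding condition of Theorem \ref{decomposition}: (1) is literal, (2) uses that $\Phi$ is a homeomorphism (so it preserves the $F_\sigma$ property), and (3) uses that $\Phi$ is a Borel isomorphism of equivalence relations (so $E(X,(x_n)) \le_B \R^\N/\ell_\infty$ iff $E(X,(X_n)) \le_B \R^\N/\ell_\infty$). The equivalences therefore transfer immediately from the theorem to the corollary. There is no substantive obstacle here; the only thing one has to be careful about is verifying that $\Phi$ is a homeomorphism of the product spaces, which reduces to the fact that each $X_n$, being one-dimensional, carries a topology linearly homeomorphic to $\R$.
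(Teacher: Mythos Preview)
Your proposal is correct and matches the paper's intent exactly: the paper states the corollary immediately after Theorem~\ref{decomposition} without any separate proof, treating it as the evident specialization to the one-dimensional decomposition $X_n=\R x_n$. Your identification $\Phi$ and the three bullet-point translations are precisely the unwritten routine verification the paper leaves to the reader.
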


\section{One lemma on additive reductions}

A lemma for converting a Borel reduction to an additive reduction will be used again and again in the rest of this paper, especially for proving nonreducibility. We introduce some concerned notions first.

\begin{definition}[Farah \cite{farah2}]
\begin{enumerate}
\item[(a)] A map $\psi:\prod_nX_n\to\prod_nX_n'$ is {\it additive} if there are $0=l_0<l_1<\cdots<l_j<\cdots$ and maps $H_j:X_j\to\prod_{n\in[l_i,l_{i+1})}X_i'$ such that
$$\psi(\alpha)=H_0(\alpha(0))^\smallfrown H_1(\alpha(1))^\smallfrown H_2(\alpha(2))^\smallfrown\cdots.$$
\item[(b)] Let $E$ and $F$ be equivalence relations on $\prod_nX_n$ and $\prod_nX_n'$ respectively, we say $E$ is {\it additively reducible} to $F$, denoted $E\le_A F$, if there is an additive reduction of $E$ to $F$.
\end{enumerate}
\end{definition}

Let $E$ be an equivalence relation on $\prod_nX_n$, and let $I\subseteq\N$ be infinite. Fix an element $\mu\in\prod_{n\notin I}X_n$. For $\alpha\in\prod_{n\in I}X_n$, put $\alpha\oplus\mu=\left\{\begin{array}{ll}\alpha(n), & n\in I,\cr \mu(n), & n\notin I.\end{array}\right.$
We define $E|_I$ (with $\mu$) on $\prod_{n\in I}X_n$ as
$$(\alpha,\beta)\in E|_I\iff(\alpha\oplus\mu, \beta\oplus\mu)\in E.$$
If for any $\alpha_1,\alpha_2\in\prod_nX_n$, $(\alpha_1,\alpha_2)\in E$ is equivalent to say $\alpha_1-\alpha_2$ is in a specified set, then the exact value of $\mu(n)$ is meaningless, thus we may assume, say, $\mu(n)=0$ for all $n\notin I$, if we need.

Let $(F_n)$ be a sequence of finite sets. A special equivalence relation $E_0(\prod_nF_n)$ defined as
$$(\alpha,\beta)\in E_0(\prod_nF_n)\iff\exists m\forall n>m(\alpha(n)=\beta(n))$$
for all $\alpha,\beta\in\prod_nF_n$.

A weak version of the following lemma is due to Dougherty and Hjorth \cite{DH}.

\begin{lemma}\label{useful2}
Let $E$ be an equivalence relation on $\prod_nF_n$ with $E_0(\prod_nF_n)\subseteq E$, where all $F_n$ are finite sets. Let $(X_n)$ be a Schauder decomposition of a separable Banach space $X$. If $E\le_B E(X,(X_n))$, then there exists an infinite $I\subseteq\N$ such that $E|_I\le_AE(X,(X_n))$.
\end{lemma}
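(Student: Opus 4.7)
The plan is to run a standard fusion argument in the style of Dougherty--Hjorth \cite{DH}. Fix a Borel reduction $\theta\colon\prod_n F_n\to\prod_n X_n$ of $E$ to $E(X,(X_n))$. Because each $F_n$ is finite, $\prod_n F_n$ is compact Polish, and the standard fact that Borel maps are continuous on a comeager set (used already in Theorem~\ref{decomposition}) yields a dense $G_\delta$ subset $G\subseteq\prod_n F_n$ on which $\theta$ is continuous. The hypothesis $E_0(\prod_n F_n)\subseteq E$ tells us that any finite modification of an input preserves its $E$-class; a short Kuratowski--Ulam argument (using finiteness of each $F_n$) shows that the intersection of $G$ with its countably many preimages under the ``replace finitely many coordinates by a fixed value'' retractions is again comeager, so I may replace $G$ by this $E_0$-invariant refinement.

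Next I will build simultaneously, by induction on $j$: indices $i_0<i_1<\cdots$ forming $I$; integers $0=l_0<l_1<\cdots$; maps $H_j\colon F_{i_j}\to\prod_{n\in[l_j,l_{j+1})}X_n$; a stem $\mu\in G$ whose coordinates outside $\{i_j\}$ are pinned down by growing finite commitments; and a pre-chosen summable error budget $(\varepsilon_j)$ with $\sum_j\varepsilon_j<+\infty$, small in the norm of $X$. At stage $j$, for each of the finitely many assignments $(\vec a,b)\in\prod_{k<j}F_{i_k}\times F_{i_j}$ (with $i_j$ still to be chosen), the element $\mu^{\vec a,b}$ obtained by placing $a_k$ at $i_k$ and $b$ at $i_j$ lies in $G$. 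Continuity of $\theta$ at each such point, combined over the finitely many assignments, lets me pick $i_j$, an $l_{j+1}>l_j$, and extend the commitment of $\mu$ to some $N_j>i_j$, so that $\theta(\xi)\!\upharpoonright\![l_j,l_{j+1})$ lies within $\varepsilon_j$ of a value $V_{\vec a,b}$ depending only on $(\vec a,b)$ for every $\xi$ respecting the commitment. Fixing canonical $a_k^*\in F_{i_k}$, define $H_j(b)=V_{\vec a^*,b}$; then $\psi(\beta)=H_0(\beta(i_0))^\smallfrown H_1(\beta(i_1))^\smallfrown\cdots$ is additive by construction.

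To see $\psi$ reduces $E|_I$ (taken with the limit stem $\mu$) to $E(X,(X_n))$, note that since $\theta$ is already a reduction, it suffices to show $\theta(\beta\oplus\mu)-\psi(\beta)\in\coef(X,(X_n))$ for every $\beta\in\prod_{n\in I}F_n$. On block $[l_j,l_{j+1})$, the input $\beta\oplus\mu$ differs from $\mu^{\vec a^*,\beta(i_j)}$ only at the finitely many history coordinates $i_0,\ldots,i_{j-1}$, so $E_0\subseteq E$ forces $\theta(\beta\oplus\mu)-\theta(\mu^{\vec a^*,\beta(i_j)})\in\coef(X,(X_n))$. The remaining per-block discrepancy between $\theta(\mu^{\vec a^*,\beta(i_j)})\!\upharpoonright\![l_j,l_{j+1})$ and $H_j(\beta(i_j))=V_{\vec a^*,\beta(i_j)}$ is at most $\varepsilon_j$ in norm, and summability $\sum_j\varepsilon_j<+\infty$ places the cumulative block error inside $\coef(X,(X_n))$. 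Combining, $\psi(\beta_1)-\psi(\beta_2)\in\coef(X,(X_n))$ iff $\theta(\beta_1\oplus\mu)-\theta(\beta_2\oplus\mu)\in\coef(X,(X_n))$, as required.

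The main obstacle is the inductive step: forcing the output block $[l_j,l_{j+1})$ to be an effective function of the single input coordinate $i_j$. Literal equality is impossible, because varying the history coordinates $i_0,\ldots,i_{j-1}$ generally yields distinct $\theta$-images that are merely $E(X,(X_n))$-equivalent. The remedy is twofold: absorb the per-block slack into the pre-fixed summable error budget $(\varepsilon_j)$, and use $E_0\subseteq E$ to reroute every input through the canonical history at only a $\coef(X,(X_n))$ cost. Aligning the modulus of continuity of $\theta$ on $G$ with the norm on $X$, and diagonalising the commitments so that the final $\mu$ indeed lands in the $G_\delta$ set $G$, is the technical heart of the proof.
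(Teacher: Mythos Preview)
Your fusion outline is in the right spirit, but the verification step contains a real gap. You write that on block $[l_j,l_{j+1})$ one may replace $\beta\oplus\mu$ by the element $\gamma^{(j)}:=\mu^{\vec a^*,\beta(i_j)}$ (agreeing with $\beta\oplus\mu$ everywhere except that the history $i_0,\dots,i_{j-1}$ is overwritten by $\vec a^*$), invoking $E_0\subseteq E$ to conclude $\theta(\beta\oplus\mu)-\theta(\gamma^{(j)})\in\coef(X,(X_n))$. That conclusion is correct, but it is a \emph{global} statement about a sequence that depends on $j$. What you actually need is that the diagonal
\[
\sum_j\bigl[\theta(\beta\oplus\mu)-\theta(\gamma^{(j)})\bigr]\!\upharpoonright[l_j,l_{j+1})
\]
converges, and this does not follow: each summand is one block of a \emph{different} element of $\coef(X,(X_n))$, with no control on its norm. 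Your construction only guarantees $\theta(\xi)\!\upharpoonright[l_j,l_{j+1})\approx V_{\vec a,b}$ for the \emph{actual} history $\vec a$ of $\xi$; applied to $\beta\oplus\mu$ this gives $V_{(\beta(i_0),\dots,\beta(i_{j-1})),\,\beta(i_j)}$, not $V_{\vec a^*,\beta(i_j)}=H_j(\beta(i_j))$, and nothing you have built bounds $\|V_{\vec a,b}-V_{\vec a',b}\|$ for distinct histories $\vec a,\vec a'$.

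The missing ingredient is exactly what the paper (following Dougherty--Hjorth) calls condition~(a): one must arrange during the construction that whenever $\alpha,\hat\alpha\in\prod_{n\in I}F_n$ agree for $n>n_j$, then $\bigl\|\sum_{n\ge l_{j+1}}[\theta(\alpha\oplus\mu)(n)-\theta(\hat\alpha\oplus\mu)(n)]\bigr\|<2^{-j}$. This is obtained not from continuity of $\theta$ but from a Baire category argument using $E_0\subseteq E$: for each of the finitely many pairs of histories on $\{n_0,\dots,n_j\}$ the $\theta$-images differ by an element of $\coef(X,(X_n))$, so one can choose $l_{j+1}$ (and refine the commitment) so that all these tails are uniformly small. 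Only once both this ``past determines the tail'' condition and your continuity-based ``future determines the initial segment'' condition are in place can one bound the block error $\|[\theta(\alpha\oplus\mu)-\theta(e_j(\alpha)\oplus\mu)]\!\upharpoonright[l_j,l_{j+1})\|$ by a summable quantity (the paper gets $(1+3M)2^{-j}$, with $M$ the decomposition constant). Your ``remedy'' of rerouting through the canonical history \emph{after} the construction, at a qualitative $\coef$ cost, does not substitute for this quantitative step.
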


\begin{proof}
The proof is modified from the proof of \cite{DH}, Theorem 2.2, claims (i)--(iii). We omit some similar arguments.

Assume that $\theta$ is a Borel reduction of $E$ to $E(X,(X_n))$. Following claims (i) and (ii), and the arguments after Claim (ii) of \cite{DH}, Theorem 2.2, we construct two sequences of natural numbers $n_0<n_1<n_2<\cdots$ and $l_0<l_1<l_2<\cdots$, a sequence of $(s_j)$, and dense open sets $D^j_i\subseteq\prod_nF_n\;(i,j\in\N)$.

Denote $I=\{n_j:j\in\N\}$ and $\mu=\bigcup_js_j$. Note that ${\rm dom}(\mu)=\bigcup_j{\rm dom}(s_j)=\N\setminus I$. The construction confirms that, for any $\alpha,\hat\alpha\in\prod_{n\in I}F_n$, we have
\begin{enumerate}
\item[(a)] if $\alpha(n)=\hat\alpha(n)$ for $n>n_j$, then
$$\|\sum_{n\ge l_{j+1}}(\theta(\alpha\oplus\mu)(n)-\theta(\hat\alpha\oplus\mu)(n))\|<2^{-j};$$
\item[(b)] if $\alpha(n)=\hat\alpha(n)$ for $n\le n_j$, then
$$\|\sum_{n<l_{j+1}}(\theta(\alpha\oplus\mu)(n)-\theta(\hat\alpha\oplus\mu)(n))\|<2^{-j}.$$
\end{enumerate}

Now we are ready to define a sequence of mappings $(H_n)_{n\in I}$ to assemble the desired additive reduction $\psi$. For each $i\in I$, fix an element $x^*_n\in F_n$. We define $p_j:F_{n_j}\to\prod_{n\in I}F_n$ for each $j\in\N$ as
$p_j(x)(n)=\left\{\begin{array}{ll}x, & n=n_j,\cr x^*_n, & n\ne n_j\end{array}\right.$ for $x\in F_n$ and $n\in\N$. Then for $n\in I$ with $n=n_j$, we define $H_n:F_n\to\prod_{n\in[l_j,l_{j+1})}X_n$ as, for $x\in F_n$,
$$H_n(x)=\theta(p_j(x)\oplus\mu)\upharpoonright[l_j,l_{j+1}).$$
The additive mapping $\psi:\prod_{n\in I}F_n\to\prod_nX_n$ defined as, for $\alpha\in\prod_{n\in I}F_n$,
$$\psi(\alpha)=H_{n_0}(\alpha(n_0))^\smallfrown H_{n_1}(\alpha(n_1))^\smallfrown H_{n_2}(\alpha(n_2))^\smallfrown\cdots.$$

We come to show that $\psi$ is a reduction of $E|_I$ to $E(X,(X_n))$.

For any $\alpha\in\prod_{n\in I}F_n$ and $j\in\N$, define $e_j(\alpha),e'_j(\alpha)\in\prod_{n\in I}F_n$ as
$$e_j(\alpha)(n)=\left\{\begin{array}{ll}\alpha(n), & n=n_j,\cr x^*_n, & n\ne n_j,\end{array}\right.\quad
e'_j(\alpha)(n)=\left\{\begin{array}{ll}\alpha(n), & n\le n_j,\cr x^*_n, & n>n_j.\end{array}\right.$$
Applying (a) for $j-1$ and (b) for $j$, we have
$$\|\sum_{n\ge l_j}(\theta(e_j(\alpha)\oplus\mu)(n)-\theta(e'_j(\alpha)\oplus\mu)(n))\|<2^{-(j-1)},$$
$$\|\sum_{n<l_{j+1}}(\theta(\alpha\oplus\mu)(n)-\theta(e'_j(\alpha)\oplus\mu)(n))\|<2^{-j}.$$
Let $M$ be the decompositon constant of $(X_n)$. We have
$$\begin{array}{ll}&\|\sum_{n\in[l_j,l_{j+1})}(\theta(\alpha\oplus\mu)(n)-\theta(e_j(\alpha)\oplus\mu)(n))\|\cr
\le&\|\sum_{n\in[l_j,l_{j+1})}(\theta(\alpha\oplus\mu)(n)-\theta(e'_j(\alpha)\oplus\mu)(n))\|\cr
&+\|\sum_{n\in[l_j,l_{j+1})}(\theta(e'_j(\alpha)\oplus\mu)(n))-\theta(e_j(\alpha)\oplus\mu)(n))\|\cr
\le&(1+M)\|\sum_{n<l_{j+1}}(\theta(\alpha\oplus\mu)(n)-\theta(e'_j(\alpha)\oplus\mu)(n))\|\cr
&+M\|\sum_{n\ge l_j}(\theta(e'_j(\alpha)\oplus\mu)(n))-\theta(e_j(\alpha)\oplus\mu)(n))\|\cr
<&(1+3M)2^{-j}.
\end{array}$$
Note that $p_j(\alpha(n_j))=e_j(\alpha)$. We have
$$\psi(\alpha)\upharpoonright[l_j,l_{j+1})=H_{n_j}(\alpha(n_j))=\theta(e_j(\alpha)\oplus\mu)\upharpoonright[l_j,l_{j+1})$$
for each $j\in\N$. For each $m\in\N$, let $n_i\le m<n_{i+1}$. We have
$$\begin{array}{ll}&\|\sum_{n\ge m}(\theta(\alpha\oplus\mu)(n)-\psi(\alpha)(n))\|\cr
\le&\|\sum_{n\in[m,l_{i+1})}(\theta(\alpha\oplus\mu)(n)-\theta(e_i(\alpha)\oplus\mu)(n))\|\cr
&+\sum_{j\ge i+1}\|\sum_{n\in[l_j,l_{j+1})}(\theta(\alpha\oplus\mu)(n)-\theta(e_j(\alpha)\oplus\mu)(n))\|\cr
<&(1+M)(1+3M)2^{-i}+\sum_{j\ge i+1}(1+3M)2^{-j}\cr
=&(2+M)(1+3M)2^{-i}.
\end{array}$$
It follows that
$$\lim_{m\to\infty}\sum_{n\ge m}(\theta(\alpha\oplus\mu)(n)-\psi(\alpha)(n))=0,$$
i.e., $(\theta(\alpha\oplus\mu),\psi(\alpha))\in E(X,(X_n))$.

In the end, for $\alpha,\hat\alpha\in\prod_{n\in I}F_n$, we have
$$\begin{array}{ll}(\psi(\alpha),\psi(\hat\alpha))\in E(X,(X_n))&\iff(\theta(\alpha\oplus\mu),\theta(\hat\alpha\oplus\mu))\in E(X,(X_n))\cr
&\iff(\alpha\oplus\mu,\hat\alpha\oplus\mu)\in E\cr
&\iff(\alpha,\hat\alpha)\in E|_I.\end{array}$$
This completes the proof.
\end{proof}

\begin{remark}
It worth noting that the preceding lemma can be applied on some variations. If there is a subset $M_n\subseteq X_n$ for each $n$ such that $E\le_BE(X,(X_n))\upharpoonright\prod_nM_n$, i.e., there is a Borel reduction $\theta$ from $\prod_nF_n$ to $\prod_nM_n$, then the resulted additive reduction $\psi$ is also mapping into $\prod_nM_n$. Thus $E|_I\le_AE(X,(X_n))\upharpoonright\prod_nM_n$.
\end{remark}

As an application, we prove the following theorem.

\begin{theorem}\label{unconditional}
Let $(x_n)$ and $(y_n)$ be bases of Banach spaces $X$ and $Y$ respectively. If $(x_n)$ is unconditional, and every subsequence of $(y_n)$ is conditional, then $E(Y,(y_n))\not\le_BE(X,(x_n))$.
\end{theorem}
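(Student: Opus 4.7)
The plan is to assume for contradiction that $\theta : \R^\N \to \R^\N$ is a Borel reduction from $E(Y,(y_n))$ to $E(X,(x_n))$, and to use Lemma \ref{useful2} to extract a block sequence in $X$ whose unconditionality (inherited from $(x_n)$) forces an unconditional subsequence of $(y_n)$, contradicting the hypothesis. Since $(y_n)$ is itself conditional, I fix real coefficients $(a_n)$ such that $\sum_n a_n y_n$ converges in $Y$, together with signs $(\epsilon_n)\in\{-1,1\}^\N$ such that $\sum_n\epsilon_n a_n y_n$ diverges. After restricting to the (necessarily infinite) support $\{n:a_n\ne 0\}$ and relabeling, I may assume $a_n\ne 0$ for every $n$. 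Define $\phi:\{0,1\}^\N\to\R^\N$ by $\phi(\alpha)(n)=\alpha(n)a_n$ and set $E=\phi^{-1}(E(Y,(y_n)))$. Then $E\supseteq E_0$ (eventually-equal pairs give finite-support differences, automatically in $\coef(Y,(y_n))$), while the pair $(\alpha_\epsilon,\beta_\epsilon)$ realizing $\alpha_\epsilon(n)-\beta_\epsilon(n)=\epsilon_n$ shows that $E$ is strictly contained in the full relation, so $E$ has uncountably many classes.

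Composing, $\theta\circ\phi$ witnesses $E\le_B E(X,(x_n))$, so Lemma \ref{useful2} supplies an infinite $I=\{n_0<n_1<\cdots\}$ and an additive reduction $\psi$ of $E|_I$ into $E(X,(x_n))$, given by blocks $[l_j,l_{j+1})$ and maps $H_j:\{0,1\}\to\R^{[l_j,l_{j+1})}$. Set $v_j=\sum_{n\in[l_j,l_{j+1})}(H_j(1)(n)-H_j(0)(n))x_n\in X$. Since every $\chi\in\{-1,0,1\}^\N$ is realizable as $(\alpha(n_j)-\beta(n_j))_j$ for some $\alpha,\beta\in\{0,1\}^I$ (take $\alpha(n_j)=\max(\chi_j,0)$ and $\beta(n_j)=\max(-\chi_j,0)$), the reduction property unpacks into the key equivalence: for every $\chi\in\{-1,0,1\}^\N$, the series $\sum_j\chi_jv_j$ converges in $X$ iff $\sum_j\chi_ja_{n_j}y_{n_j}$ converges in $Y$. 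Unconditionality of $(x_n)$ makes the block sequence $(v_j)_{j:v_j\ne 0}$ an unconditional basic sequence in $X$, so the set of $\chi$'s giving convergent left-hand sums is closed under flipping signs on the support of $\chi$; via the equivalence, the same closure property transfers to the right-hand side.

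To finish, I use $a_ny_n\to 0$ (terms of the convergent series $\sum a_ny_n$) to thin $\{j:v_j\ne 0\}$ to an infinite $A$ with $\sum_{j\in A}\|a_{n_j}y_{n_j}\|<\infty$, so $\sum_{j\in A}a_{n_j}y_{n_j}$ converges absolutely. The equivalence gives $\sum_{j\in A}v_j$ convergent in $X$; unconditionality of $(v_j)_{j\in A}$ then makes $\sum_{j\in A}\epsilon'_jv_j$ convergent for every sign sequence $(\epsilon'_j)$, and the equivalence in reverse yields $\sum_{j\in A}\epsilon'_ja_{n_j}y_{n_j}$ convergent for every sign choice, identifying $(a_{n_j}y_{n_j})_{j\in A}$ --- and hence, after rescaling by the nonzero $a_{n_j}$, the subsequence $(y_{n_j})_{j\in A}$ of $(y_n)$ --- as an unconditional basic sequence, contradicting the hypothesis. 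The main obstacle is the translation step from the additive structure of $\psi$ to the $\{-1,0,1\}$-coefficient equivalence, together with excluding the degenerate case in which $v_j=0$ for cofinitely many $j$; the latter is ruled out because such a $\psi$ would distinguish only finitely many $E(X,(x_n))$-classes, incompatible with the uncountably many $E|_I$-classes inherited from the strictness of $E$.
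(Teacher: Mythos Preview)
There is a genuine gap in the final step. From your key equivalence you can only probe series of the specific form $\sum_j\chi_j a_{n_j}y_{n_j}$ with $\chi_j\in\{-1,0,1\}$. Showing that $\sum_{j\in A}\epsilon'_j a_{n_j}y_{n_j}$ converges for every sign sequence $\epsilon'$ says only that the \emph{single} series $\sum_{j\in A}a_{n_j}y_{n_j}$ converges unconditionally; it does \emph{not} make $(y_{n_j})_{j\in A}$ an unconditional basic sequence, which would require the analogous statement for \emph{every} convergent $\sum_j b_j y_{n_j}$. Worse, you chose $A$ precisely so that $\sum_{j\in A}\|a_{n_j}y_{n_j}\|<\infty$, so the unconditional convergence you obtain is already immediate from absolute convergence; the passage through $(v_j)$ recovers nothing, and no contradiction is reached. (The same objection applies to your handling of the degenerate case: if cofinitely many $v_j$ vanish then every $\sum_j\chi_j a_{n_j}y_{n_j}$ converges, but this again only says one series is unconditionally convergent, not that any subsequence of $(y_n)$ is unconditional; the appeal to ``uncountably many $E|_I$-classes'' is unjustified, since strictness of $E$ does not automatically descend to $E|_I$.)

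The underlying problem is the order of choices: you fix the witnessing data $(a_n),(\epsilon_n)$ for the full sequence $(y_n)$ \emph{before} Lemma~\ref{useful2} hands you $I$, and after passing to $I$ neither the convergence of $\sum_j a_{n_j}y_{n_j}$ nor the divergence of $\sum_j\epsilon_{n_j}a_{n_j}y_{n_j}$ survives in general, so your $(\epsilon_n)$ are never used. The paper reverses the order. It applies Lemma~\ref{useful2} with the richer sets $F_n=\{i/2^n:0\le i\le 2^n\}$, obtains $I=\{n_k\}$, and \emph{then} invokes the hypothesis on the subsequence $(y_{n_k})$ to produce coefficients $a_0(k)\in F_{n_k}$ and signs $\epsilon(k)$ with $\sum_k a_0(k)y_{n_k}$ convergent and $\sum_k\epsilon(k)a_0(k)y_{n_k}$ divergent. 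Setting $t_k=H_{n_k}(a_0(k))-H_{n_k}(0)$ gives $t_0{}^\smallfrown t_1{}^\smallfrown\cdots\in\coef(X,(x_n))$ but $\epsilon(0)t_0{}^\smallfrown\epsilon(1)t_1{}^\smallfrown\cdots\notin\coef(X,(x_n))$, directly contradicting the unconditionality of $(x_n)$. Your $\{0,1\}$-valued setup is too coarse to accommodate the post-hoc choice of $a_0(k)$; this is exactly why the larger $F_n$ are needed.
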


\begin{proof}
Assume for contradiction that $\theta:\R^\N\to\R^\N$ is a Borel reduction of $E(Y,(y_n))$ to $E(X,(x_n))$. Denote $F_n=\{i/2^n:i=0,1,\cdots,2^n\}$, and denote by $E$ the restriction of $E(Y,(y_n))$ on $\prod_nF_n$. Then $\theta\upharpoonright\prod_nF_n$ is also a Borel reduction of $E$ to $E(X,(x_n))$. From Lemma \ref{useful2}, we can find an infinite set $I\subseteq\N$ and an element $\mu\in\prod_{n\notin I}F_n$ such that $E|_I$ (with $\mu$) is additively reducible to $E(X,(x_n))$. Without loss of generality, we may assume that $\mu(n)=0$ for $n\notin I$. Therefore, we can find, for each $n\in I$, a natural number $l_n\ge 1$ and a map $H_n:F_n\to\R^{l_n}$ such that the following $\psi$ is a reduction of $E|_I$ to $E(X,(x_n))$. Let $(n_k)$ is the strictly increasing enumeration of $I$, then $\psi$ is defined as
$$\psi(a)=H_{n_0}(a(n_0))^\smallfrown H_{n_1}(a(n_1))^\smallfrown H_{n_2}(a(n_2))^\smallfrown\cdots,$$
for any $a\in\prod_{n\in I}F_n$.

From the assumption of $(y_n)$, the subsequence $(y_{n_k})$ is conditional. By Definition 1.c.5 and Proposition 1.c.6 of \cite{LT}, there exists an $a_0\in\R^\N$ such that $\sum_ka_0(k)y_{n_k}$ converges conditionally. Then by Proposition 1.c.1 of \cite{LT}, there is an $\epsilon\in\{-1,1\}^\N$ such that $\sum_ka_0(k)y_{n_k}$ converges while $\sum_k\epsilon(k)a_0(k)y_{n_k}$ diverges.

Without loss of generality, we may assume that $(y_n)$ is normalized. Then we have $\ell_1\subseteq\coef(Y,(y_{n_k}))\subseteq c_0$, so we may further assume that $a_0(k)\in F_{n_k}$. For each $n_k\in I$, denote $a^I(n_k)=a_0(k),a^\emptyset(n)=0$, and define
$$a^+(n_k)=\left\{\begin{array}{ll}a_0(k), & \epsilon(k)=1,\cr 0, & \epsilon(k)=-1,\end{array}\right.\quad
a^-(n_k)=\left\{\begin{array}{ll}0, & \epsilon(k)=1,\cr a_0(k), & \epsilon(k)=-1.\end{array}\right.$$
Since $\sum_{n\in I}(a^I(n)-a^\emptyset(n))y_n$ converges and $\sum_{n\in I}(a^+(n)-a^-(n))y_n$ diverges, we have $\psi(a^I)-\psi(a^\emptyset)\in\coef(X,(x_n))$ and $\psi(a^+)-\psi(a^-)\notin\coef(X,(x_n))$. Denote $t_k=H_{n_k}(a_0(k))-H_{n_k}(0)$. Then we have
$$t_0{}^\smallfrown t_1{}^\smallfrown t_2{}^\smallfrown\cdots\in\coef(X,(x_n)),$$
$$\epsilon(0)t_0{}^\smallfrown\epsilon(1)t_1{}^\smallfrown\epsilon(2)t_2{}^\smallfrown\cdots\notin\coef(X,(x_n)).$$
This contradicts the unconditionality of $(x_n)$ (cf. Proposition 1.c.6 of \cite{LT}).
\end{proof}

\section{Different Schauder bases of a Banach space}

A question arises naturally:
\begin{quote}
If $(x_n)$ and $(y_n)$ are two Schauder bases of a Banach space $X$, does $E(X,(x_n))\sim_B E(X,(y_n))$?
\end{quote}
Recall that two sequences $(x_n)$ and $(y_n)$ of $X$ are equivalent if $\coef(X,(x_n))=\coef(X,(y_n))$. It is well known that, in every infinite dimensional Banach space with a basis, there exist continuum many mutually non-equivalent normalized bases (see \cite{PS}, 4.1). If we restrict on unconditional bases, only $c_0,\ell_1$ and $\ell_2$ have the property that all unconditional bases are equivalent (see \cite{LT}, Theorem 2.b.10).

When we return to Borel bireducibility between $E(X,(x_n))$ and $E(X,(y_n))$, the question becomes very complicated. From Corollary \ref{fsigma}, if $(x_n)$ is boundedly complete and $(y_n)$ is not, then $E(X,(x_n))\not\sim_BE(X,(y_n))$. In \cite{zippin}, Zippin proved that, for a Banach space $X$ with a basis, $X$ is reflexive iff all bases of $X$ are boundedly complete. So if a non-reflexive space $X$ has a boundedly complete basis, the question turns out to fail for $X$. Which spaces the question can hold for? Perhaps, the most possible candidates might be reflexive spaces. So far, the only known example of such space is Hilbert space $\ell_2$.

\begin{lemma}\label{uptriangle}
Let $(x_n)$ and $(y_k)$ be two bases of a Banach space $X$. If $y_k=\sum_n\alpha_{nk}x_n$ with $\alpha_{nk}=0$ for any $n<k$, then $E(X,(x_n))\sim_BE(X,(y_k))$.
\end{lemma}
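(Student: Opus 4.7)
The plan is to construct an explicit continuous linear bijection $\theta_1\colon\R^\N\to\R^\N$ serving as a Borel reduction from $E(X,(y_k))$ to $E(X,(x_n))$, together with its inverse $\theta_2$ giving the reverse reduction. Define $\theta_1(a)(n)=\sum_{k=0}^{n}\alpha_{nk}a(k)$; each coordinate is a finite sum by the triangularity hypothesis, so $\theta_1$ is continuous and linear. The motivation is the formal identity $\sum_k a(k) y_k=\sum_n\bigl(\sum_k\alpha_{nk}a(k)\bigr)x_n$, which rewrites the $(y_k)$-expansion of a vector as its $(x_n)$-expansion.

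To construct $\theta_2$ I first need a matching triangular expansion $x_n=\sum_{k\ge n}\beta_{kn}y_k$; this is the heart of the proof. From $\alpha_{nk}=0$ for $n<k$ one reads off $y_k\in\overline{\mathrm{span}}\{x_j:j\ge k\}$, hence $\overline{\mathrm{span}}\{y_k:k\ge n\}\subseteq\overline{\mathrm{span}}\{x_j:j\ge n\}$ for every $n$. Because $(x_n)$ and $(y_k)$ are bases of all of $X$, each of these closed subspaces is the kernel of the canonical projection $P_{n-1}$ and has codimension exactly $n$ in $X$; two nested closed subspaces of equal finite codimension must coincide. This forces $x_n\in\overline{\mathrm{span}}\{y_k:k\ge n\}$, and applying $y_k^*$ for $k<n$ yields $\beta_{kn}=0$. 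Now set $\theta_2(b)(k)=\sum_{n=0}^{k}\beta_{kn}b(n)$, again continuous and linear.

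Next I verify that $\theta_1$ and $\theta_2$ are mutually inverse. Applying the biorthogonal functional $y_m^*$ to the convergent series $y_k=\sum_n\alpha_{nk}x_n$ and using its continuity gives $\delta_{mk}=\sum_n\alpha_{nk}\beta_{mn}$; by the two triangularities this reduces to $\sum_{n=k}^{m}\alpha_{nk}\beta_{mn}$, a finite sum, which together with its mirror version justifies all reorderings and yields $\theta_2\circ\theta_1=\theta_1\circ\theta_2=\mathrm{id}_{\R^\N}$.

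Finally I establish the reduction property. If $a\in\coef(X,(y_k))$ and $z=\sum_k a(k)y_k$, then applying $x_n^*$ by continuity gives $x_n^*(z)=\sum_k a(k)\alpha_{nk}=\theta_1(a)(n)$, so $z=\sum_n\theta_1(a)(n)x_n$ and $\theta_1(a)\in\coef(X,(x_n))$. The reverse implication is the symmetric statement applied to $\theta_2$, combined with $\theta_2\circ\theta_1=\mathrm{id}$. Linearity then upgrades this to a Borel reduction $\theta_1$ of $E(X,(y_k))$ to $E(X,(x_n))$, and $\theta_2$ supplies the reverse reduction, yielding $E(X,(x_n))\sim_B E(X,(y_k))$. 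The principal obstacle is the codimension argument producing the triangularity of $\beta$; it crucially uses that both sequences are full bases of $X$, not merely basic sequences.
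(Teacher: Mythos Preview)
Your proof is correct and follows essentially the same strategy as the paper: define $\theta_1(a)(n)=\sum_{k\le n}\alpha_{nk}a(k)$, show the inverse coefficient matrix $(\beta_{kn})$ is also lower triangular, and use $\theta_2(b)(k)=\sum_{n\le k}\beta_{kn}b(n)$ as the inverse reduction. The only real difference is in establishing $\beta_{kn}=0$ for $k<n$: the paper argues algebraically by induction on $m$ from the identity $\delta_{mn}=\sum_{k\le m}\beta_{kn}\alpha_{mk}$ (implicitly using $\alpha_{mm}\ne 0$), whereas you give a geometric codimension argument showing $\overline{\mathrm{span}}\{y_k:k\ge n\}=\overline{\mathrm{span}}\{x_j:j\ge n\}$; both are valid and your version makes more transparent why the hypothesis that both sequences are full bases of $X$ is needed.
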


\begin{proof}
For any $a\in\R^\N$ and $n\in\N$, define $\theta(a)(n)=\sum_{k\le n}\alpha_{nk}a(k)$. Since $(y_k)$ is a basis of $X$, we can assume that $x_n=\sum_k\beta_{kn}y_k$ for each $n$. Then
$$\delta_{mn}=\left\{\begin{array}{ll}1, & m=n,\cr 0, & m\ne n\end{array}\right.=x^*_m(x_n)=\sum_k\beta_{kn}x^*_m(y_k)=\sum_{k\le m}\beta_{kn}\alpha_{mk}.$$
By induction on $m$, we can prove that $\beta_{kn}=0$ for any $k<n$. Furthermore, we also have $\sum_{n\le m}\alpha_{nk}\beta_{mn}=y^*_m(y_k)=\delta_{mk}$. Therefore, $\theta:\R^\N\to\R^\N$ is invertible and $\theta^{-1}(d)(k)=\sum_{n\le k}\beta_{kn}d(n)$ for any $d\in\R^\N$ and $k\in\N$.

Let $a,b\in\R^\N$. If $a-b\in\coef(X,(y_k))$, then there is $x\in X$ such that $x=\sum_k(a(k)-b(k))y_k$. Note that
$$x^*_n(x)=\sum_k(a(k)-b(k))x^*_n(y_k)=\sum_{k\le n}\alpha_{nk}(a(k)-b(k)).$$
We have $x=\sum_nx^*_n(x)x_n=\sum_n(\theta(a)(n)-\theta(b)(n))x_n$. Thus $\theta(a)-\theta(b)\in\coef(X,(x_n))$. On the other hand, using $\theta^{-1}$, we can prove that, if $\theta(a)-\theta(b)\in\coef(X,(x_n))$, then $a-b\in\coef(X,(y_k))$.

Therefore, $\theta$ and $\theta^{-1}$ witness $E(X,(x_n))\sim_BE(X,(y_k))$.
\end{proof}

\begin{theorem}
For any basis $(y_k)$ of $\ell_2$, we have $E(\ell_2,(y_k))\sim_B\R^\N/\ell_2$.
\end{theorem}

\begin{proof}
For each $n\in\N$, denote $X_n=[y_k]_{k\ge n}$. Find a normalized $x_n\in X_n$ such that $x_n\perp X_{n+1}$. Then we have $\{x_0,\cdots,x_n\}^\perp=X_{n+1}$. We claim that $(x_n)$ is an orthonormal basis of $\ell_2$. It is easy to see that $x_m\perp x_n$ for $m\ne n$. Thus $(x_n)$ is an orthonormal basis of $[x_n]_{n\in\N}$. Let $x\perp[x_n]_{n\in\N}$. Then $x\in\bigcap_n X_n$. Since $(y_k)$ is a basis, let $x=\sum_kr_ky_k$. We can see that $r_k=0$ for any $k$, so $x=0$. It follows that $[x_n]_{n\in\N}=\ell_2$.

Let $\langle\cdot,\cdot\rangle$ be the inner product of $\ell_2$. For any $n,k\in\N$, denote $\alpha_{nk}=\langle y_k,x_n\rangle$. Since $y_k\in X_k$, we have $\alpha_{nk}=0$ for $n<k$. By Lemma \ref{uptriangle}, we have $E(\ell_2,(y_k))\sim_BE(\ell_2,(x_n))=\R^\N/\ell_2$.
\end{proof}

Besides Hilbert space $\ell_2$, we would like to investigate the Borel reducibility between $E(X,(x_n))$'s with $(x_n)$ a basis of $X$. In this section, we focus on two special spaces: $c_0$ and $\ell_1$. Both of them are non-reflexive. $\ell_1$ has boundedly complete bases, while $c_0$ has none.

\begin{theorem}\label{finite}
Let $(x_n)$ be a basis of a Banach space $X$. Let $y_k=\sum_n\alpha_{nk}x_n$ satisfies that, for any $n$, there are only finitely many $k$ such that $\alpha_{nk}\ne 0$. Then
$$E(X,(y_k))\le_B E(X,(x_n))\otimes\R^\N/c_0.$$
\end{theorem}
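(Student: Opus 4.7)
The plan is to construct a Borel reduction $\theta:\R^\N\to\R^\N\times\R^\N$, $\theta(a)=(\theta_1(a),\theta_2(a))$ (which will in fact be continuous and linear), capturing the convergence of $\sum_k a(k) y_k$ in $X$ as two independent pieces. First I would set $\theta_1(a)(n)=T(a)(n):=\sum_k a(k)\alpha_{nk}$; this is a finite sum by the row-finite hypothesis, so $\theta_1$ is continuous and linear. If $\sum_k a(k)y_k$ converges to some $z\in X$, then applying $x_n^*$ yields $T(a)(n)=x_n^*(z)$, whence $T(a)\in\coef(X,(x_n))$; this is the necessary ``first factor'' condition.

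For the second coordinate, the key observation is that once $T(a)\in\coef(X,(x_n))$ so that $s(a):=\sum_n T(a)(n)x_n\in X$ exists, the remaining condition for $a\in\coef(X,(y_k))$ is simply $\|S_K(a)-s(a)\|\to 0$, where $S_K(a)=\sum_{k\le K}a(k)y_k$. Setting $K^*(n)=\max\{k:\alpha_{nk}\ne 0\}$ (finite by hypothesis) and $N(K)=\min\{n:K^*(n)>K\}$, one checks that $N(K)\to\infty$ and that $P_{N(K)-1}S_K(a)$ and $P_{N(K)-1}s(a)$ coincide, because the coefficient $b_K(n):=\sum_{k\le K}a(k)\alpha_{nk}$ equals $T(a)(n)$ whenever $n<N(K)$ (since then $K^*(n)\le K$). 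Writing $v_K(a):=(I-P_{N(K)-1})S_K(a)=S_K(a)-P_{N(K)-1}s(a)$, the fact that $P_{N(K)-1}s(a)\to s(a)$ shows that $\|S_K(a)-s(a)\|\to 0$ is equivalent to $\|v_K(a)\|_X\to 0$.

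Crucially, $v_K(a)$ lies in the finite-dimensional subspace $V_K:=(I-P_{N(K)-1})(\mathrm{span}\{y_0,\dots,y_K\})$. Using compactness of the unit sphere of $V_K$ together with Hahn--Banach, I would pick finitely many $\phi_{K,1},\dots,\phi_{K,N_K}\in X^*$ with $\|\phi_{K,i}\|\le 1$ such that $\max_i|\phi_{K,i}(v)|\ge\tfrac12\|v\|$ for every $v\in V_K$, and set $\theta_2(a)(K,i)=\phi_{K,i}(v_K(a))=\sum_{k\le K}a(k)\phi_{K,i}((I-P_{N(K)-1})y_k)$, a finite linear expression in $a$. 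Enumerating the countable index set $\{(K,i):K\in\N,\ 1\le i\le N_K\}$ as $\N$ so that all pairs with a given $K$ appear before moving on to $K+1$, one obtains $\theta_2(a)\in c_0$ iff $\max_i|\phi_{K,i}(v_K(a))|\to 0$ as $K\to\infty$, which by the $\tfrac12$-norming property is equivalent to $\|v_K(a)\|_X\to 0$. Linearity of $\theta_2$ then delivers, for $c=a-b$, the sought equivalence $c\in\coef(X,(y_k))$ iff $T(c)\in\coef(X,(x_n))$ and $\theta_2(c)\in c_0$.

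The main obstacle is the construction in the third step: one must verify that a $\tfrac12$-norming family $(\phi_{K,i})$ with finite cardinality $N_K$ exists for each $K$, which is where the finite-dimensionality of $V_K$, compactness of its unit sphere, and the Hahn--Banach extension theorem are essential. A subsidiary point is that the whole argument relies on $K^*(n)<\infty$ for every $n$---precisely the row-finite hypothesis---without which one could neither define $N(K)$ nor even make sense of $T$.
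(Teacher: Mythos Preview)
Your proof is correct and follows the same overall architecture as the paper's: both define the first coordinate $\theta_1(a)(n)=\sum_k a(k)\alpha_{nk}$, and both isolate the same ``tail'' object $v_K(a)=(I-P_{N(K)-1})S_K(a)$ (the paper writes this as $\theta_2(a)(m)=\sum_{k\le m}a(k)\sum_{n>N_m}\alpha_{nk}x_n$, with $N_m=N(m)-1$ in your notation) and establish that $a-b\in\coef(X,(y_k))$ iff $\theta_1(a)-\theta_1(b)\in\coef(X,(x_n))$ and $\|v_K(a)-v_K(b)\|\to 0$.

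The one genuine difference is how the $X$-valued null-sequence condition on the tails is converted into a $c_0$ condition on real sequences. The paper simply cites an external result (Theorem~3.4 of \cite{ding2}) giving a Borel reduction from $X^\N/\{\alpha:\|\alpha(n)\|\to 0\}$ to $\R^\N/c_0$ for separable $X$. You instead exploit the extra structure available here---that each $v_K(a)$ lies in a \emph{fixed} finite-dimensional subspace $V_K$ independent of $a$---and build the reduction by hand via a finite $\tfrac12$-norming family of functionals on each $V_K$. This makes your argument self-contained and yields a continuous linear reduction, at the modest cost of the Hahn--Banach/compactness step; the paper's route is shorter but invokes a black box.
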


\begin{proof}
For $m\in\N$, denote $N_m=\max\{N:\forall n\le N\forall k>m(\alpha_{nk}=0)\}$. From the assumption of $\alpha_{nk}$, we can see $\lim_{m\to\infty}N_m=\infty$. Define $\theta_1:\R^\N\to\R^\N$ by $\theta_1(a)(n)=\sum_ka(k)\alpha_{nk}$ for $a\in\R^\N$ and $n\in\N$, and define $\theta_2:\R^\N\to X^\N$ by
$$\theta_2(a)(m)=\sum_{k\le m}a(k)\sum_{n>N_m}\alpha_{nk}x_n$$
for $a\in\R^\N$ and $m\in\N$. The definition of $N_m$ implies that, for $n\le N_m$, $\theta_1(a)(n)=\sum_{k\le m}a(k)\alpha_{nk}$.

Let $a,b\in\R^\N$. We have
$$\begin{array}{ll}&\sum_{k\le m}(a(k)-b(k))y_k\cr
=&\sum_{k\le m}(a(k)-b(k))\left(\sum_{n\le N_m}\alpha_{nk}x_n+\sum_{n>N_m}\alpha_{nk}x_n\right)\cr
=&\sum_{n\le N_m}\left(\sum_{k\le m}(a(k)-b(k))\alpha_{nk}\right)x_n+(\theta_2(a)(m)-\theta_2(b)(m))\cr
=&\sum_{n\le N_m}\left(\theta_1(a)(n)-\theta_1(b)(n)\right)x_n+(\theta_2(a)(m)-\theta_2(b)(m)).\end{array}$$

If $a-b\in\coef(X,(y_k))$, we denote $x=\sum_k(a(k)-b(k))y_k$. Since $(x_n)$ is a basis, we have
$$\begin{array}{ll}x&=\sum_nx_n^*(x)x_n\cr
&=\sum_n\left(\sum_k(a(k)-b(k))x_n^*(y_k)\right)x_n\cr
&=\sum_n\left(\sum_k(a(k)-b(k))\alpha_{nk}\right)x_n\cr
&=\sum_n(\theta_1(a)(n)-\theta_1(b)(n))x_n.\end{array}$$
Thus $\theta_1(a)-\theta_1(b)\in\coef(X,(x_n))$. Furthermore, by $\lim_{m\to\infty}N_m=\infty$, we have
$$x=\lim_{m\to\infty}\sum_{k\le m}(a(k)-b(k))y_k=\lim_{m\to\infty}\sum_{n\le N_m}(\theta_1(a)(n)-\theta_1(b)(n))x_n.$$
Therefore $\lim_{m\to\infty}\|\theta_2(a)(m)-\theta_2(b)(m)\|=0$.

On the other hand, assume that
$$\theta_1(a)-\theta_1(b)\in\coef(X,(x_n))\quad\&\quad\lim_{m\to\infty}\|\theta_2(a)(m)-\theta_2(b)(m)\|=0.$$
Then $\sum_n\left(\theta_1(a)(n)-\theta_1(b)(n)\right)x_n$ is convergent. Furthermore, we have
$$\begin{array}{ll}\sum_k(a(k)-b(k))y_k&=\lim_{m\to\infty}\sum_{k\le m}(a(k)-b(k))y_k\cr
&=\lim_{m\to\infty}\sum_{n\le N_m}(\theta_1(a)(n)-\theta_1(b)(n))x_n\cr
&=\sum_n(\theta_1(a)(n)-\theta_1(b)(n))x_n.\end{array}$$
It follows that $a-b\in\coef(X,(y_k))$.

By Theorem 3.4 of \cite{ding2}, there is a Borel function $\theta':X^\N\to\R^\N$ such that, for $x,y\in X^\N$,
$$\lim_{m\to\infty}\|x(m)-y(m)\|=0\iff(\theta'(x)-\theta'(y))\in c_0.$$
Now we can define the desired Borel reduction of $E(X,(y_k))$ to $E(X,(x_n))\otimes\R^\N/c_0$ as $\theta(a)=(\theta_1(a),\theta'(\theta_2(a)))$ for all $a\in\R^\N$.
\end{proof}

Recall that a basis $(x_n)$ of a Banach space $X$ is called {\it subsymmetric} if it is unconditional and any subsequence $(x_{n_k})$ of $(x_n)$ is equivalent to $(x_n)$ itself. The unit vector bases of $c_0$ and $\ell_p$ are subsymmetric. The following theorem is due to Xin Ma.

\begin{theorem} [\cite{ma}, Theorem 1.1] \label{ma}
Let $(y_n)$ be a basis of a Banach space $Y$, and $(x_n)$ a subsymmetric basis of a closed subspace $X$ of $Y$. Then $E(X,(x_n))\le_B E(Y,(y_n))$.
\end{theorem}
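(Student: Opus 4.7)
My plan is to produce, after passing to a subsequence of $(x_n)$ (harmless by subsymmetry), a block basic sequence $(u_i)$ of $(y_n)$ in $Y$ that is equivalent to $(x_n)$ as a basic sequence. Once such $(u_i)$ is in hand, the conclusion assembles from three ingredients: subsymmetry gives $E([x_{n_i}],(x_{n_i}))=E(X,(x_n))$ for every subsequence; the equivalence $(u_i)\sim(x_{n_i})$ gives $\coef([u_i],(u_i))=\coef([x_{n_i}],(x_{n_i}))$, so the associated equivalence relations on $\R^\N$ coincide; and the block-basis reduction recorded in the preliminaries yields $E(Y,(u_i))\le_B E(Y,(y_n))$. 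Stringing these together will furnish the required Borel reduction.

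To locate $(u_i)$, I would first normalize both bases and expand $x_n=\sum_k\alpha_{kn}y_k$ with $\alpha_{kn}=y_k^*(x_n)$. Then I would split on the following dichotomy for subsymmetric bases: either $(x_n)$ is weakly null in $Y$, or $(x_n)$ is equivalent to the unit vector basis of $\ell_1$. To justify the dichotomy, assume $(x_n)$ is not weakly null, so that some $x^*\in X^*$ satisfies $|x^*(x_{n_k})|\ge c>0$ along a subsequence; after a further subsequence, one may assume $x^*(x_{n_k})\ge c$ throughout. A lower estimate $\|\sum_i a_ix_{n_i}\|\ge c'\sum_i|a_i|$ then follows by first handling nonnegative coefficients via $x^*$ and then invoking the unconditionality built into subsymmetry, while the matching upper estimate is immediate from normalization, so $(x_{n_i})\sim\ell_1$ and subsymmetry transfers this to $(x_n)$.

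In the weakly null case, the classical Bessaga--Pelczynski gliding-hump construction applies: inductively choose $n_1<n_2<\cdots$ and $0=K_0<K_1<\cdots$ with $\|P_{K_{i-1}}x_{n_i}\|+\|(I-P_{K_i})x_{n_i}\|<\varepsilon_i$, controlling the head via weak nullness (equivalently, $\alpha_{kn}\to 0$ as $n\to\infty$ for each fixed $k$) and the tail via convergence of the $(y_k)$-expansion of $x_{n_i}$. Setting $u_i=(P_{K_i}-P_{K_{i-1}})x_{n_i}$ produces genuine block-supported elements with $\sum_i\|x_{n_i}-u_i\|$ as small as desired, so the standard principle of small perturbations for basic sequences gives $(u_i)\sim(x_{n_i})$, completing this case.

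In the $\ell_1$ case, $X\subseteq Y$ contains an isomorphic copy of $\ell_1$, so $Y^*$ surjects onto $\ell_\infty$ and is non-separable; hence $(y_n)$ cannot be a shrinking basis of $Y$, and a classical theorem of James provides a block basic sequence $(u_i)$ of $(y_n)$ equivalent to the $\ell_1$ unit vector basis, which is automatically equivalent to $(x_n)$ since both yield $\coef=\ell_1$. The hard part, I expect, will be this $\ell_1$ case: unlike the weakly null case, $(u_i)$ cannot be realized as a small perturbation of $(x_n)$ itself, and one must argue abstractly from the failure of shrinkingness to extract an $\ell_1$-block rather than by a direct gliding-hump in $(x_n)$.
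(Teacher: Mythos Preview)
The paper does not supply a proof of this theorem; it is quoted from Ma \cite{ma} and used as a black box. So there is no in-paper argument to compare against, and I comment only on the soundness of your plan.

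Your overall strategy---produce a block basic sequence of $(y_n)$ equivalent to (a subsequence of) $(x_n)$, then chain the three reductions---is correct, and the weakly null case goes through exactly as you describe via Bessaga--Pe{\l}czy\'nski. The gap is in the $\ell_1$ case. The theorem of James you invoke, that a non-shrinking basis admits a block basic sequence equivalent to the $\ell_1$ unit vector basis, is stated and proved for \emph{unconditional} bases (this is the result recorded as \cite{james} and \cite[Theorem 1.c.10]{LT} in the present paper). No unconditionality is assumed on $(y_n)$ here, so the passage from ``$(y_n)$ is not shrinking'' to ``$(y_n)$ has an $\ell_1$-block'' is not justified as written.

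The fix is to bypass the shrinkingness detour and run the gliding hump on a modification of $(x_n)$ itself. Since $(x_n)$ is bounded in $Y$, diagonalize to a subsequence along which each coordinate $y_k^*(x_n)$ converges; then the differences $w_i=x_{n_{2i}}-x_{n_{2i+1}}$ satisfy $y_k^*(w_i)\to 0$ for every $k$, are semi-normalized, and---by the unconditionality and subsequence-invariance built into subsymmetry---are equivalent to $(x_n)$. Now Bessaga--Pe{\l}czy\'nski applies to $(w_i)$ exactly as in your weakly null case, producing the desired block basis $(u_i)$ of $(y_n)$ with $(u_i)\sim(x_n)$. Note that this single argument already handles both branches of your dichotomy, so the case split (and the appeal to James) can be dropped entirely.
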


\subsection{Bases in $c_0$}

By Theorem \ref{ma}, among all $E(c_0,(x_n))$ with $(x_n)$ a basis of $c_0$, $\R^\N/c_0$ is the minimum element with respect to Borel reducibility. We are going to find a maximum among them.

We denote
$${\rm cs}=\{a\in\R^\N:\sum_na(n)\mbox{ converges}\}.$$
Let $x^1_n=\sum_{k\le n}e_k$, where $(e_k)$ is the unit vector basis of $c_0$. Then $(x^1_n)$ is a basis of $c_0$ too. Since
$$\sum_na(n)x^1_n=(\sum_{n\ge 0}a(n),\sum_{n\ge 1}a(n),\cdots,\sum_{n\ge k}a(n),\cdots),$$
we can see ${\rm cs}=\coef(c_0,(x^1_n))$.

A simple reduction $\theta(a)=(a(0),-a(0),a(1),-a(1),\cdots)$ witnesses that $\R^\N/c_0\le_B\R^\N/{\rm cs}$. We can easily prove $\R^\N/{\rm cs}\sim_B\R^\N/c$, where $c$ is the set of all convergent sequences. It is worth noting that the unit vectors cannot form a basis of $c$.

For $m\ge 1$, note that $\bigoplus_{i=1}^mc_0\cong c_0$. We choose a suitable basis $(x^m_n)$ of $c_0$ and define ${\rm cs}^{(m)}=\coef(c_0,(x^m_n))$. To do so, let $(e^i_k)$ be the unit vector basis of the $i$-th component space $c_0$, then we set $x^m_{mj+i-1}=\sum_{k\le j}e^i_k$ for $i\le m$ and $j\in\N$.

Furthermore, recall that
$$\left(\bigoplus_{i\in\N}c_0\right)_0=\{(a_n)\in(c_0)^\N:\lim_{n\to\infty}\|a_n\|=0\}.$$
We still have $\left(\bigoplus_{i\in\N}c_0\right)_0\cong c_0$. Now fix a bijection $\langle\cdot,\cdot\rangle:\N^2\to\N$ such that, for any $i$, $\langle i,j\rangle$ is strictly increasing with respect to variable $j$. Let $(e^i_k)$ be the unit vector basis of the $i$-th component space $c_0$ in $\left(\bigoplus_{i\in\N}c_0\right)_0$. We set $x^\infty_n=\sum_{k\le j}e^i_k$ for $n=\langle i,j\rangle$. We can see that $(x^\infty_n)$ is also a basis of $c_0$. Now we denote
$${\rm cs}^{(\infty)}=\coef(c_0,(x^\infty_n)).$$

It is straight forward to check that
$${\rm cs}^{(m)}=\{a\in\R^\N:\forall i\le m(\sum_ja(mj+i-1)\mbox{ converges})\},$$
$${\rm cs}^{(\infty)}=\{a\in\R^\N:\forall j(a(\langle\cdot,j\rangle)\in c_0)\;\&\;\sum_ja(\langle\cdot,j\rangle)\mbox{ converges in }c_0\}.$$

For any Banach space $X$, we define
$${\rm cs}(X)=\{\alpha\in X^\N:\sum_n\alpha(n)\mbox{ converges in }X\},$$
and for any $\alpha\in{\rm cs}(X)$, define
$$\3\alpha\3_X=\sup_n\|\sum_{i\le n}\alpha(i)\|.$$
By Cauchy criterion, it is straightforward to check that $({\rm cs}(X),\3\cdot\3_X)$ is a Banach space. Furthermore, letting $X_n=\{\alpha\in{\rm cs}(X):\forall i\ne n(\alpha(i)=0)\}$, we can see that $(X_n)$ forms a Schauder decomposition of ${\rm cs}(X)$. For any sequence $(x_n)$ in $X$, we claim that
$$E(X,(x_n))\le_BX^\N/{\rm cs}(X).$$
Indeed, for any $a\in\R^\N$ and $k\in\N$, we define $\theta(a)(k)=a(k)x_k$. Then $\theta$ is a Borel reduction of $E(X,(x_n))$ to $X^\N/{\rm cs}(X)$.

An easy observation shows that
$$(\R^m)^\N/{\rm cs}(\R^m)\sim_B\R^\N/{\rm cs}^{(m)},$$
$$(c_0)^\N/{\rm cs}(c_0)\le_B\R^\N/{\rm cs}^{(\infty)}=E(c_0,(x^{(\infty)}_n))\le_B(c_0)^\N/{\rm cs}(c_0).$$
Therefore, $\R^\N/{\rm cs}^{(\infty)}$ is the desired maximum element. Furthermore, we have the following Theorem.

\begin{theorem}
Let $(x_n)$ be a none-zero sequence in $c_0$. Then
$$\R^\N/c_0\le_BE(c_0,(x_n))\le_B\R^\N/{\rm cs}^{(\infty)}.$$
\end{theorem}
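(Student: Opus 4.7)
I treat the two bounds separately.

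The upper bound $E(c_0,(x_n))\le_B\R^\N/{\rm cs}^{(\infty)}$ is a direct composition of two Borel reductions already recorded in the paragraph preceding the theorem: the general bound $E(X,(x_n))\le_B X^\N/{\rm cs}(X)$, valid for any sequence in any Banach space $X$ via $\theta(a)(k)=a(k)x_k$, specialised to $X=c_0$; and the observation $(c_0)^\N/{\rm cs}(c_0)\le_B\R^\N/{\rm cs}^{(\infty)}$. No further argument is needed.

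For the lower bound $\R^\N/c_0\le_B E(c_0,(x_n))$, the plan is to extract from $(x_n)$ a subsequence or block combination equivalent to the unit vector basis of $c_0$, and then transport the bound back through the block and subsequence reductions given in the preliminaries (both of which remain valid for arbitrary sequences, not merely bases). First, since $E(c_0,(x_n))\sim_B E(c_0,(r_nx_n))$ for any sequence $(r_n)$ of non-zero scalars, I rescale so that $\|x_n\|=1$. Since $(x_n)$ is bounded in $c_0\subseteq\ell_\infty$, a diagonal Bolzano--Weierstrass argument supplies a subsequence $(x_{n_k})$ converging coordinate-wise to some $x\in\ell_\infty$.

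If $x=0$, then $(x_{n_k})$ is normalized and weakly null in $c_0$, and a classical gliding-hump extraction produces a further subsequence with almost disjoint supports, equivalent to the unit vector basis of $c_0$. If $x\ne 0$, I pass instead to the weakly null differences $u_k=x_{n_{2k+1}}-x_{n_{2k}}$: either infinitely many $u_k$ vanish, so that a subsequence of $(x_{n_k})$ is constant and $\coef$ reduces to ${\rm cs}$ (giving $\R^\N/c_0\le_B\R^\N/{\rm cs}\le_B E(c_0,(x_n))$ via the telescoping reduction already noted in the paper); or gliding-hump applied to $u_k/\|u_k\|$ yields a block combination equivalent to $(e_n)$. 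In every scenario I obtain a block or subsequence $(v_j)$ of $(x_n)$ with $\coef(c_0,(v_j))=c_0$, so that $E(c_0,(v_j))=\R^\N/c_0$, and the block/subsequence reduction from the preliminaries then witnesses $\R^\N/c_0\le_B E(c_0,(x_n))$.

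The main obstacle I anticipate is the case split around the coordinate-wise limit $x$: the construction of a genuine block equivalent to $(e_n)$ is cleanest when $x=0$, but when $x\ne 0$ one must either reduce via ${\rm cs}$ (when the pairwise differences collapse to $0$ infinitely often) or pass to the weakly null differences. An alternative route bypassing much of this case analysis is to invoke Theorem \ref{ma} directly: once any basic subsequence $(x_{n_k})$ of $(x_n)$ has been produced, the $c_0$-saturation of $c_0$ ensures that $[x_{n_k}]\subseteq c_0$ contains an isomorphic copy of $c_0$ whose natural basis is equivalent to $(e_n)$, and Theorem \ref{ma} applied inside $[x_{n_k}]$ yields $\R^\N/c_0\le_B E(c_0,(x_{n_k}))\le_B E(c_0,(x_n))$.
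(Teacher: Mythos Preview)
Your upper bound is exactly the paper's. For the lower bound the overall strategy matches too, but your case analysis when the coordinatewise limit $x\ne 0$ has two gaps. First, ``infinitely many $u_k=x_{n_{2k+1}}-x_{n_{2k}}$ vanish $\Rightarrow$ a subsequence of $(x_{n_k})$ is constant'' is false: vanishing $u_k$ only gives equal \emph{pairs} $x_{n_{2k}}=x_{n_{2k+1}}$, possibly with different values for different $k$ (take $x_{n_k}=e_0+e_{1+\lfloor k/2\rfloor}$: all $u_k=0$, coordinatewise limit $e_0$, no constant subsequence). Second, the gliding hump on $u_k/\|u_k\|$ needs that normalized sequence to be weakly null, which can fail when $\|u_k\|\to 0$: for $x_{n_k}=e_0+2^{-k}e_1$ one gets $u_k/\|u_k\|\equiv -e_1$. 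This last sequence is norm-convergent to $e_0\ne 0$ and therefore has no basic subsequence, so your alternative route via Theorem~\ref{ma} does not rescue it either.

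The paper's dichotomy is organised around norm convergence in $c_0$ (after normalising and passing to a weak$^*$-convergent subsequence in $\ell_\infty=c_0^{**}$). If $(x_n)\to x$ in norm then $\|x\|=1$, and a rapidly convergent subsequence $\|x_{n_k}-x\|\le 2^{-k}$ satisfies $\coef(c_0,(x_{n_k}))={\rm cs}$, giving $\R^\N/c_0\le_B\R^\N/{\rm cs}\le_B E(c_0,(x_n))$; this is exactly the situation your pairwise differences cannot detect. In the non-convergent case the paper invokes a selection principle (cited as Eberlein--\v Smulian in \cite{diestel}) to extract a basic subsequence $(x_{n_k})$, and then applies Propositions~1.a.11, 2.a.1 and 1.a.12 of \cite{LT} to produce a further subsequence equivalent to the unit vector basis of $c_0$. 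Your difference trick is in fact well suited to the sub-case where $(x_{n_k})$ converges weakly but not in norm (there the $u_k$ are weakly null and, along a subsequence, seminormalized), so the two approaches are complementary; but the norm-convergent case must be handled separately, by the ${\rm cs}$ argument, and that is the piece your sketch is missing.
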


\begin{proof}
$E(c_0,(x_n))\le_B\R^\N/{\rm cs}^{(\infty)}$ follows from $(c_0)^\N/{\rm cs}(c_0)\sim_B\R^\N/{\rm cs}^{(\infty)}$. We only need to prove $\R^\N/c_0\le_BE(c_0,(x_n))$.

We may assume that $(x_n)$ is normalized. Note that $c_0^{**}=\ell_\infty$ and the unit ball of $\ell_\infty$ is weak$^*$ compact. There is a subsequence of $(x_n)$ which is weak$^*$ convergent in $\ell_\infty$. Without loss of generality, assume that $(x_n)$ itself is weak$^*$ convergent.

{\sl Case 1.} If $(x_n)$ is not convergent in $c_0$, by the Eberlein-\u Smulian theorem (cf. \cite[p. 41]{diestel}), there is a subsequence $(x_{n_k})$ of $(x_n)$ which is a basic sequence. By Proposition 1.a.11 of \cite{LT}, there is a basic sequence $(y_n)$ in $[x_{n_k}]_{k\in\N}$ which is equivalent to a block basis  $(u_j)$ of the unit vector basis $(e_n)$. We may also assume that $(u_j)$ is normalized. From Proposition 2.a.1 of \cite{LT}, $(u_j)$ is equivalent to $(e_n)$. Let $(x_{n_k}^*)$ be the biorthogonal functionals in $[x_{n_k}]_{k\in\N}^*$. Then $x_{n_k}^*\upharpoonright[y_n]_{n\in\N}\in[y_n]_{n\in\N}^*$. Since $(y_n)$ is equivalent to $(e_n)$, we have $(y_n^*)$ is equivalent to the unit vector basis of $\ell_1$. It follows that $\lim_{n\to\infty}x_{n_k}^*(y_n)=0$ for each $k\in\N$. Then by Proposition 1.a.12 of \cite{LT}, there is a subsequence $(y_{n_j})$ of $(y_n)$ which is equivalent to a block basis of $(x_{n_k})$. Note that $(y_{n_j})$ is still equivalent to the unit vector basis of $c_0$, we have
$$\R^\N/c_0=E(c_0,(y_{n_j}))\le_BE(c_0,(x_{n_k}))\le_BE(c_0,(x_n)).$$

{\sl Case 2.} If $(x_n)$ converges to $x\in c_0$. Then $\|x\|=1$, since $(x_n)$ is normalized. There is a subsequence $(x_{n_k})$ such that $\|x_{n_k}-x\|\le 2^{-k}$. Then for any $a\in\R^\N$, we have
$$\sum_ka(k)x_{n_k}\mbox{ converges}\iff\sum_ka(k)x\mbox{ converges}\iff a\in{\rm cs}.$$
Thus $\R^\N/{\rm cs}\le_BE(c_0,(x_n))$, and hence $\R^\N/c_0\le_BE(c_0,(x_n))$.
\end{proof}

\begin{remark}
Following the proof of the last theorem, we can also get: for any none-zero sequence $(x_n)$ in $\ell_p$ with $p>1$, we have
$$\R^\N/\ell_p\le_BE(\ell_p,(x_n))\quad\mbox{ or }\quad\R^\N/{\rm cs}\le_BE(\ell_p,(x_n)).$$
\end{remark}

\begin{corollary}
Let $(x_k)$ be a none-zero sequence in $c_0$. If for any $n$ there are only finitely many $k$ such that $x_k(n)\ne 0$, then $E(c_0,(x_k))\sim_B\R^\N/c_0$.
\end{corollary}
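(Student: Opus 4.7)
The plan is to combine the preceding theorem with Theorem~\ref{finite}. Since the preceding theorem already delivers $\R^\N/c_0\le_B E(c_0,(x_k))$ for every none-zero $(x_k)$ in $c_0$, the entire work lies in producing a Borel reduction in the reverse direction, from $E(c_0,(x_k))$ to $\R^\N/c_0$.

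My approach is to apply Theorem~\ref{finite} with $X=c_0$, taking the basis $(x_n)$ of that theorem to be the unit vector basis $(e_n)$ of $c_0$ and letting the sequence $(y_k)$ there play the role of the present $(x_k)$. Writing $x_k=\sum_n x_k(n)e_n$ in $c_0$, the coefficients become $\alpha_{nk}=x_k(n)$, and the hypothesis of the corollary---that for each $n$ only finitely many $k$ satisfy $x_k(n)\ne 0$---is exactly the column-finiteness condition required by Theorem~\ref{finite}. This immediately yields
$$E(c_0,(x_k))\le_B E(c_0,(e_n))\otimes\R^\N/c_0=\R^\N/c_0\otimes\R^\N/c_0.$$

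It then suffices to observe $\R^\N/c_0\otimes\R^\N/c_0\sim_B\R^\N/c_0$. This is immediate: the interleaving map $(a,b)\mapsto(a(0),b(0),a(1),b(1),\dots)$ produces a sequence lying in $c_0$ iff both $a$ and $b$ do, while the diagonal map $a\mapsto(a,a)$ witnesses the reverse reduction. (Equivalently, one can invoke the identity $E(c_0\oplus c_0,(z_n))\sim_B\R^\N/c_0\otimes\R^\N/c_0$ from the preliminaries together with the uniqueness of unconditional bases of $c_0$ in Theorem~2.b.10 of \cite{LT} applied to the interleaved basis.) No serious obstacle arises; the corollary is essentially a packaging of Theorem~\ref{finite} applied to the unit vector basis of $c_0$, followed by a trivial absorption step.
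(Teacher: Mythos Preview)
Your proof is correct and follows essentially the same approach as the paper: invoke the preceding theorem for $\R^\N/c_0\le_B E(c_0,(x_k))$, apply Theorem~\ref{finite} with the unit vector basis of $c_0$ to get $E(c_0,(x_k))\le_B\R^\N/c_0\otimes\R^\N/c_0$, and then absorb the product. The paper's proof is identical in structure, only terser.
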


\begin{proof}
The last theorem implies $\R^\N/c_0\le_BE(c_0,(x_k))$. From Theorem \ref{finite}, we get $E(c_0,(x_k))\le_B\R^\N/c_0\otimes\R^\N/c_0$. So we have $E(c_0,(x_k))\le_B\R^\N/c_0$, since $\R^\N/c_0\otimes\R^\N/c_0\le_B\R^\N/c_0$ is trivial.
\end{proof}

Now we are going to compare $\R^\N/c_0$, $\R^\N/{\rm cs}^{(m)}$, and $\R^\N/{\rm cs}^{(\infty)}$.

Recall that a series $\sum_kx_k$ is said to be {\it perfectly divergent} if for any $\epsilon\in\{-1,1\}^\N$ the series $\sum_k\epsilon(k)x_k$ diverges. The only interesting case is when $\lim_{k\to\infty}x_k=0$. One example in $\ell_p$ is $\sum_n(n+1)^{-1/p}e_n$. Another example in $c_0$ is the follows:
$$\begin{array}{llrrrrrrrrr}
x_0&=(1,&0,&0,&0,&0,&0,&0,&0,&0,&\cdots),\cr
x_1&=(0,&\frac{1}{2},&\frac{1}{2},&0,&0,&0,&0,&0,&0,&\cdots),\cr
x_2&=(0,&\frac{1}{2},&-\frac{1}{2},&0,&0,&0,&0,&0,&0,&\cdots),\cr
x_3&=(0,&0,&0,&\frac{1}{3},&\frac{1}{3},&\frac{1}{3},&\frac{1}{3},&0,&0,&\cdots),\cr
x_4&=(0,&0,&0,&\frac{1}{3},&\frac{1}{3},&-\frac{1}{3},&-\frac{1}{3},&0,&0,&\cdots),\cr
x_5&=(0,&0,&0,&\frac{1}{3},&-\frac{1}{3},&\frac{1}{3},&-\frac{1}{3},&0,&0,&\cdots),\cr
\cdots&\cdots\end{array}$$
In fact, Dvoretzky proved that, in any infinite-dimensional Banach space, there are perfectly divergent series whose general term tends to $0$ (see, e.g. \cite{2kadets}, Theorem 6.2.1). On the other hand, Dvoretzky-Hanani's Theorem shows that for any perfectly divergent series in a finite-dimensional space, its general term does not tend to $0$ (see, e.g. \cite{2kadets}, Theorem 2.2.1).

\begin{lemma}\label{PD}
Let $X$ be a separable infinite-dimensional Banach space and $Y$ a finite-dimensional normed space. Then
$$X^\N/{\rm cs}(X)\not\le_B Y^\N/{\rm cs}(Y).$$
\end{lemma}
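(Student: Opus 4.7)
The proof proceeds by contradiction, exploiting the contrast between Dvoretzky's theorem in $X$ and Dvoretzky-Hanani's theorem in the finite-dimensional space $Y$, with Lemma \ref{useful2} supplying the additive reduction that mediates between them.

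Suppose for contradiction that $X^\N/{\rm cs}(X)\le_B Y^\N/{\rm cs}(Y)$. By Dvoretzky's theorem choose a sequence $(u_k)\subseteq X$ with $\|u_k\|\to 0$ such that $\sum_k u_k$ is perfectly divergent. Define an equivalence relation $E$ on $\{0,1\}^\N$ by $(\alpha,\beta)\in E\iff\sum_k(\alpha_k-\beta_k)u_k$ converges in $X$. Then $E_0(\{0,1\}^\N)\subseteq E$, and $\alpha\mapsto(\alpha_k u_k)_k$ is a Borel reduction of $E$ to $X^\N/{\rm cs}(X)$; hence $E\le_B Y^\N/{\rm cs}(Y)=E({\rm cs}(Y),(Y_n))$, where $(Y_n)$ is the natural Schauder decomposition of ${\rm cs}(Y)$ into copies of $Y$. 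Apply Lemma \ref{useful2} to get an infinite set $I=\{k_0<k_1<\cdots\}\subseteq\N$ and an additive reduction $\psi:\{0,1\}^I\to Y^\N$ of $E|_I$ to $Y^\N/{\rm cs}(Y)$, with block boundaries $0=l_0<l_1<\cdots$ and block maps $H_j:\{0,1\}\to Y^{l_{j+1}-l_j}$. Set $t_j=H_j(1)-H_j(0)$, $M_j=\max_{n\in[l_j,l_{j+1})}\|t_j(n)\|$, and the block-sum $A_j=\sum_{n\in[l_j,l_{j+1})}t_j(n)\in Y$. The reduction property reads: for each $\alpha\in\{0,1\}^\N$, $\sum_j\alpha_j u_{k_j}$ converges in $X$ iff $\sum_n(\psi(\alpha)-\psi(0))(n)$ converges in $Y$.

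The contradiction comes by case analysis. If $M_j\not\to 0$ or $A_j\not\to 0$, pick an infinite $J\subseteq\N$ on which the offending norm is bounded below by some $\delta>0$, and make $J$ sparse enough that $\sum_{j\in J}u_{k_j}$ converges in $X$ (possible since $\|u_{k_j}\|\to 0$); then $\alpha=\chi_J$ yields $X$-convergence, but the $Y$-series has either terms of norm $\ge\delta$ infinitely often or block-sums of norm $\ge\delta$ infinitely often, hence diverges, violating the reduction. If $M_j\to 0$ and $A_j\to 0$, then Dvoretzky-Hanani in finite-dimensional $Y$ produces $\epsilon\in\{-1,1\}^\N$ with $\sum_j\epsilon_j A_j$ convergent. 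Splitting $\epsilon=\alpha^+-\alpha^-$ into disjointly-supported $\{0,1\}$-valued sequences, one has $(\psi(\alpha^+)-\psi(\alpha^-))(n)=\epsilon_j t_j(n)$ on block $j$, and under $M_j\to 0$ convergence in $Y$ reduces to convergence of $\sum_j\epsilon_j A_j$; the reduction then forces $\sum_j\epsilon_j u_{k_j}=\sum_j(\alpha^+_j-\alpha^-_j)u_{k_j}$ to converge in $X$, contradicting perfect divergence of $(u_k)$.

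The principal obstacle is the last step: arbitrary infinite subsequences of a null sequence in $X$ may admit absolutely convergent subseries, so Dvoretzky's theorem alone does not guarantee that $(u_{k_j})$ inherits perfect divergence from $(u_k)$. This is overcome by a refined construction using the full strength of the infinite-dimensional Dvoretzky theorem: place the $u_k$ inside nested almost-Euclidean blocks of carefully growing dimensions, and arrange the block structure so that every infinite subsequence retains enough Euclidean rigidity for all $\pm 1$-signed sums to diverge. A secondary technicality is reducing convergence in $Y$ of $\psi(\alpha)-\psi(0)$ to convergence of $\sum_j\alpha_j A_j$, handled by using that $\alpha_j t_j(n)\to 0$ is already necessary for convergence in ${\rm cs}(Y)$.
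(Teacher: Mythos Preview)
Your overall strategy---Dvoretzky in $X$, Lemma~\ref{useful2}, then Dvoretzky--Hanani in $Y$---is exactly the paper's, and your case analysis on whether the block data tend to zero is a correct, more explicit rendering of the paper's claim that $\3 H_{n_k}(x_k)-H_{n_k}(0)\3_Y\to 0$.

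The genuine gap is the fix you propose for your ``principal obstacle''. That fix is impossible: no null sequence $(u_k)$ in any Banach space can have the property that every infinite subsequence is perfectly divergent. Since $\|u_k\|\to 0$, one may always pick $k_0<k_1<\cdots$ with $\|u_{k_j}\|<2^{-j}$, and then $\sum_j\epsilon_j u_{k_j}$ converges absolutely for every $\epsilon\in\{-1,1\}^\N$. No arrangement of almost-Euclidean blocks can prevent this, so the argument as written does not close.

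The paper sidesteps the obstacle by a different choice of the finite sets fed into Lemma~\ref{useful2}. Rather than $F_n=\{0,1\}$, it takes $F_n=\{0\}\cup\{x_k:k\le n\}$, letting the alphabet grow with $n$. When Lemma~\ref{useful2} returns $I=\{n_0<n_1<\cdots\}$, one still has $x_k\in F_{n_k}$ for every $k$ (because $n_k\ge k$), and the argument is then run with the \emph{full} original sequence $(x_k)_{k\in\N}$, embedded diagonally via $n_k\mapsto x_k\in F_{n_k}$. No subsequence of $(x_k)$ is ever taken, so perfect divergence is preserved for free. The remainder---proving $\3 H_{n_k}(x_k)-H_{n_k}(0)\3_Y\to 0$ and then invoking Dvoretzky--Hanani---is exactly as you outlined.
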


\begin{proof}
Let $\sum_kx_k$ be a perfectly divergent series in $X$ with $\lim_{k\to\infty}x_k=0$. We denote $F_n=\{0\}\cup\{x_k:k\le n\}$ for each $n\in\N$, and denote by $E$ the restriction of $X^\N/{\rm cs}(X)$ on $\prod_nF_n$. Assume for contraction that $X^\N/{\rm cs}(X)\le_B Y^\N/{\rm cs}(Y)$. Then we also have $E\le_B Y^\N/{\rm cs}(Y)$.

From Lemma \ref{useful2}, we can find an infinite set $I\subseteq\N$ and an element $\mu\in\prod_{n\notin I}F_n$ such that $E|_I$ (with $\mu$) is additively reducible to $Y^\N/{\rm cs}(Y)$. Without loss of generality, we may assume that $\mu(n)=0$ for $n\notin I$. Therefore, we can find, for each $n\in I$, a natural number $l_n\ge 1$ and a map $H_n:F_n\to Y^{l_n}$ such that the following $\psi$ is a reduction of $E|_I$ to $Y^\N$. Let $(n_k)$ is the strictly increasing enumeration of $I$, then $\psi$ is defined as
$$\psi(\alpha)=H_{n_0}(\alpha(n_0))^\smallfrown H_{n_1}(\alpha(n_1))^\smallfrown H_{n_2}(\alpha(n_2))^\smallfrown\cdots,$$
for any $\alpha\in\prod_{n\in I}F_n$.

For $(y_0,\cdots,y_{l-1})\in Y^l$, denote
$$\3(y_0,\cdots,y_{l-1})\3_Y=\max_{i<l}\|y_0+\cdots+y_i\|.$$
We claim that $\lim_{k\to\infty}\3H_{n_k}(x_k)-H_{n_k}(0)\3_Y=0$. If not, there shall be an infinite $K\subseteq\N$ and $\varepsilon>0$ such that $\3H_{n_k}(x_k)-H_{n_k}(0)\3_Y\ge\varepsilon$ for each $k\in K$. Since $\lim_{k\to\infty}x_k=0$, we can find an infinite $J\subseteq K$ with $\sum_{k\in J}x_k$ converging. Now define $\alpha^\emptyset(n)=0$ for each $n\in I$ and $\alpha^J\in\prod_{n\in I}F_n$ as
$$\alpha^J(n)=\left\{\begin{array}{ll}x_k, & n=n_k,k\in J,\cr 0, & \mbox{otherwise}.\end{array}\right.$$
Then $\sum_{n\in I}(\alpha^J(n)-\alpha^\emptyset(n))=\sum_{k\in J}x_k$ converges, so $(\alpha^J,\alpha^\emptyset)\in E|_I$. Therefore $\psi(\alpha^J)-\psi(\alpha^\emptyset)\in{\rm cs}(Y)$. By Cauchy criterion, we have
$$\lim_{k\to\infty}\3H_{n_k}(\alpha^J(n_k))-H_{n_k}(\alpha^\emptyset(n_k))\3_Y=0.$$
This contradicts with $J\subseteq K$.

By Dvoretzky-Hanani's Theorem, $\sum_k(H_{n_k}(x_k)-H_{n_k}(0))$ is not perfectly divergent, so there is a sequence $\epsilon\in\{-1,1\}^\N$ such that $$\sum_k\epsilon(k)(H_{n_k}(x_k)-H_{n_k}(0))\mbox{ converges.}$$
Now define $\alpha^+,\alpha^-\in\prod_{n\in I}F_n$ as
$$\alpha^+(n_k)=\left\{\begin{array}{ll}x_k, & \epsilon(k)=1,\cr 0, &\epsilon(k)=-1,\end{array}\right.\quad \alpha^-(n_k)=\left\{\begin{array}{ll}0, & \epsilon(k)=1,\cr x_k, &\epsilon(k)=-1.\end{array}\right.$$
We have $\sum_{n\in I}(\alpha^+(n)-\alpha^-(n))=\sum_k\epsilon(k)x_k$ diverges, since $\sum_kx_k$ is perfectly divergent. It follows that $(\psi(\alpha^+)-\psi(\alpha^-))\notin{\rm cs}(Y)$. On the other hand, we can see that
$$\sum_k(H_{n_k}(\alpha^+(n_k))-H_{n_k}(\alpha^-(n_k)))=\sum_k\epsilon(k)(H_{n_k}(x_k)-H_{n_k}(0))\mbox{ converges.}$$
Comparing with $\lim_{k\to\infty}\3H_{n_k}(x_k)-H_{n_k}(0)\3_Y=0$, we have $(\psi(\alpha^+)-\psi(\alpha^-))\in{\rm cs}(Y)$. A contradiction!
\end{proof}

\begin{theorem}
For $m\ge 1$, we have
\begin{enumerate}
\item[(i)] $\R^\N/{\rm cs}^{(m)}\le_B\R^\N/{\rm cs}^{(m+1)}$,
\item[(ii)] $\R^\N/c_0<_B\R^\N/{\rm cs}^{(m)}<_B\R^\N/{\rm cs}^{(\infty)}$.
\end{enumerate}
\end{theorem}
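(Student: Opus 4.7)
The plan splits into three pieces: the easy reduction for (i), the top strict inequality in (ii), and the bottom strict inequality in (ii).

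For (i), I would write down a concrete ``insert zeros'' reduction $\theta:\R^\N\to\R^\N$ sending $a$ to $\theta(a)$ defined by $\theta(a)((m+1)j+i-1)=a(mj+i-1)$ for $1\le i\le m$ and $\theta(a)((m+1)j+m)=0$. Then $a\in{\rm cs}^{(m)}$ iff each of the $m$ sub-series $\sum_j a(mj+i-1)$ converges iff each of the $m+1$ sub-series of $\theta(a)$ converges (the last one being trivially zero). Since $\theta$ is linear, this passes to differences and gives a continuous reduction.

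For the upper $<_B$ in (ii), I would use the bireducibilities already noted in the paper: $\R^\N/{\rm cs}^{(m)}\sim_B(\R^m)^\N/{\rm cs}(\R^m)$ and $\R^\N/{\rm cs}^{(\infty)}\sim_B c_0^\N/{\rm cs}(c_0)$. The $\le_B$ direction comes from the isometric embedding $\R^m\hookrightarrow c_0$ onto the first $m$ coordinates, which induces a Borel reduction $(\R^m)^\N/{\rm cs}(\R^m)\le_B c_0^\N/{\rm cs}(c_0)$. The $\not\ge_B$ direction is an immediate application of Lemma \ref{PD} with $X=c_0$ separable infinite-dimensional and $Y=\R^m$ finite-dimensional.

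The main challenge is the lower $<_B$ in (ii), namely $\R^\N/{\rm cs}^{(m)}\not\le_B\R^\N/c_0$. The $\le_B$ direction is already given by the reduction $\R^\N/c_0\le_B\R^\N/{\rm cs}$ noted earlier combined with (i). For non-reducibility I would apply Theorem \ref{unconditional} with $(x_n)=(e_n)$, the unconditional unit vector basis of $c_0$, and $(y_n)=(x^m_n)$, the basis of $c_0$ generating ${\rm cs}^{(m)}$. What needs to be verified is that every subsequence of $(x^m_n)$ is conditional. Since $(x^m_n)$ interleaves the $m$ summing bases of $m$ copies of $c_0$, any subsequence contains, by pigeonhole, infinitely many terms from a single component $i_0$, giving a sub-subsequence of the form $y_q=\sum_{k\le j_q}e^{i_0}_k$ with $(j_q)$ strictly increasing. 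Taking $a_q=(-1)^q/(q+1)$, the partial sums of $\sum_q a_q y_q$ are uniformly controlled coordinatewise by the alternating-series estimate and converge in $c_0$, so $(a_q)\in\coef(c_0,(y_q))$; but the sequence $|a_q|=1/(q+1)$ produces partial sums whose $0$-th coordinate is a harmonic tail, hence not Cauchy, so $(|a_q|)\notin\coef(c_0,(y_q))$. This shows $(y_q)$ is not unconditional, and conditionality of a subsequence propagates upward to conditionality of $(x^m_n)$. Theorem \ref{unconditional} then yields $\R^\N/{\rm cs}^{(m)}\not\le_B\R^\N/c_0$.

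The main obstacle is exactly this conditionality verification; once it is in place, Lemma \ref{PD} and Theorem \ref{unconditional} each handle one of the two strict inequalities in (ii).
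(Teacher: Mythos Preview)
Your proposal is correct and follows essentially the same route as the paper: clause (i) is trivial, the upper strict inequality comes from Lemma~\ref{PD} via the bireducibilities $\R^\N/{\rm cs}^{(m)}\sim_B(\R^m)^\N/{\rm cs}(\R^m)$ and $\R^\N/{\rm cs}^{(\infty)}\sim_B c_0^\N/{\rm cs}(c_0)$, and the lower strict inequality comes from Theorem~\ref{unconditional}. The only difference is that the paper reduces the lower inequality to the case $m=1$ (since $\R^\N/{\rm cs}\le_B\R^\N/{\rm cs}^{(m)}$ by (i), it suffices to show $\R^\N/{\rm cs}\not\le_B\R^\N/c_0$), whereas you apply Theorem~\ref{unconditional} directly for every $m$ and therefore need the pigeonhole step; your extra work in verifying that every subsequence of the summing basis is conditional is exactly the hypothesis the paper silently invokes for $m=1$.
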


\begin{proof}
Clause (i) is trivial. Already known $\R^\N/c_0\le_B\R^\N/{\rm cs}$, Theorem \ref{unconditional} gives $\R^\N/c_0<_B\R^\N/{\rm cs}$. For proving $\R^\N/{\rm cs}^{(m)}<_B\R^\N/{\rm cs}^{(\infty)}$, we only need to prove
$$(\R^m)^\N/{\rm cs}(\R^m)<_B(c_0)^\N/{\rm cs}(c_0).$$
Since $(\R^m)^\N/{\rm cs}(\R^m)\le_B(c_0)^\N/{\rm cs}(c_0)$ is trivial, the assertion follows from Lemma \ref{PD}.
\end{proof}

\subsection{Bases of $\ell_1$}

Similar to last subsection, we know $\R^\N/\ell_1$ is the minimum element among $E(\ell_1,(x_n))$ with $(x_n)$ a basis of $\ell_1$. Unfortunately, we did not find a maximum for them so far. We managed to find some bases such that the equivalence relations generated by them are not Borel reducible to $\R^\N/\ell_1$.

We denote ${\rm bv}_0={\rm bv}\cap c_0$ where
$${\rm bv}=\{a\in\R^\N:\sum_n|a(n)-a(n+1)|<+\infty\}.$$
Let $y_0^1=e_0$ and $y_n^1=e_n-e_{n-1}$ for $n>0$, where $(e_n)$ is the unit vector basis of $\ell_1$. Then $(y_n^1)$ is a basis of $\ell_1$ too. Since
$$\sum_na(n)y_n^1=(a(0)-a(1),a(1)-a(2),\cdots,a(n)-a(n+1),\cdots),$$
we can see ${\rm bv}_0=\coef(\ell_1,(y_n^1))$.

\begin{theorem}
$\R^\N/\ell_1<_B\R^\N/{\rm bv}_0$.
\end{theorem}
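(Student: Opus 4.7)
The plan is to prove the two halves of $<_B$ separately, both relying on earlier structural results from the paper. For the inequality $\R^\N/\ell_1\le_B\R^\N/{\rm bv}_0$, I would apply Theorem \ref{ma} with $Y=X=\ell_1$, equipping $Y$ with the basis $(y^1_n)$ and $X$ (trivially a closed subspace of itself) with the unit vector basis $(e_n)$, which is unconditional and symmetric and therefore subsymmetric. This immediately yields $E(\ell_1,(e_n))\le_B E(\ell_1,(y^1_n))$, i.e.\ $\R^\N/\ell_1\le_B\R^\N/{\rm bv}_0$.

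For the strict direction $\R^\N/{\rm bv}_0\not\le_B\R^\N/\ell_1$, I would compare both sides with $\R^\N/\ell_\infty$ via Corollary \ref{fsigma}. On the $\ell_1$ side, the unit vector basis $(e_n)$ is boundedly complete, since $\sup_n\|\sum_{i\le n}a(i)e_i\|_1<+\infty$ says exactly that $\sum_i|a(i)|<+\infty$; hence Corollary \ref{fsigma} gives $\R^\N/\ell_1\le_B\R^\N/\ell_\infty$. On the ${\rm bv}_0$ side, I would verify that $(y^1_n)$ is \emph{not} boundedly complete in $\ell_1$: by the telescoping identity
$$\sum_{i\le n}y^1_i = e_0+\sum_{i=1}^n(e_i-e_{i-1})=e_n,$$
the constant sequence $a(n)\equiv 1$ produces partial sums of $\ell_1$-norm $1$, yet the series $\sum_n y^1_n$ fails to converge (because $\|e_n-e_m\|_1=2$ for $n\ne m$). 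Corollary \ref{fsigma} then gives $\R^\N/{\rm bv}_0\not\le_B\R^\N/\ell_\infty$. A hypothetical reduction $\R^\N/{\rm bv}_0\le_B\R^\N/\ell_1$, composed with $\R^\N/\ell_1\le_B\R^\N/\ell_\infty$, would contradict this.

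The only genuine computation in the argument is the telescoping witness for the failure of bounded completeness of $(y^1_n)$; everything else is a direct appeal to Theorem \ref{ma} and Corollary \ref{fsigma}. Accordingly, I do not anticipate a substantive obstacle, and I expect the write-up to be short.
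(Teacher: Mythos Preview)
Your proposal is correct and, for the strict direction, identical to the paper's argument: both verify that $(y^1_n)$ is not boundedly complete via the telescoping $\sum_{i\le n}y^1_i=e_n$ and then invoke Corollary~\ref{fsigma}. The only difference is in the easy direction $\R^\N/\ell_1\le_B\R^\N/{\rm bv}_0$: the paper writes down the explicit reduction $\theta(a)(2k)=a(k)$, $\theta(a)(2k+1)=0$, whereas you appeal to Theorem~\ref{ma}; both are valid, and indeed the paper already remarks just before this theorem that Theorem~\ref{ma} makes $\R^\N/\ell_1$ minimal among Schauder equivalence relations from bases of $\ell_1$.
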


\begin{proof}
For $a\in\R^\N$ and $k\in\N$, define $\theta(a)(2k)=a(k)$ and $\theta(a)(2k+1)=0$. It is clear that $\theta$ is a Borel reduction of $\R^\N/\ell_1$ to $\R^\N/{\rm bv}_0$.

Note that $\|y_0^1+\cdots+y_n^1\|=1$ for any $n\in\N$, but $\sum_ny_n^1$ does not converge. It follows that $(y_n^1)$ is not boundedly complete. By Corollary \ref{fsigma}, we have $\R^\N/{\rm bv}_0\not\le_B\R^\N/\ell_\infty$. And hence $\R^\N/{\rm bv}_0\not\le_B\R^\N/\ell_1$.
\end{proof}

For $m\ge 1$, similar to ${\rm cs}^{(m)}$, note that $\bigoplus_{i=1}^m\ell_1\cong\ell_1$, we choose a suitable basis $(y_n^m)$ of $\ell_1$ such that $\coef(\ell_1,(y_n^m))={\rm bv}_0^{(m)}={\rm bv}^{(m)}\cap c_0$, where
$${\rm bv}^{(m)}=\{a\in\R^\N:\forall i\le m(\sum_j|a(mj+i-1)-a(m(j+1)+i-1)|<+\infty)\}.$$

Also similar to ${\rm cs}^{(\infty)}$, we define ${\rm bv}_0^{(\infty)}$ as follows. Recall that
$$\left(\bigoplus_{i\in\N}\ell_1\right)_1=\{(a_n)\in(\ell_1)^\N:\sum_n\|a_n\|<+\infty\}.$$
We still have $\left(\bigoplus_{i\in\N}\ell_1\right)_1\cong\ell_1$. Fix a bijection $\langle\cdot,\cdot\rangle:\N^2\to\N$ such that, for any $i$, $\langle i,j\rangle$ is strictly increasing with respect to variable $j$. We can find a basis $(y_n^\infty)$ of $\ell_1$ such that  $\coef(\ell_1,(y_n^\infty))={\rm bv}_0^{(\infty)}={\rm bv}^{(\infty)}\cap c_0$, where
$${\rm bv}^{(\infty)}=\{a\in\R^\N:\sum_i\sum_j|a(\langle i,j\rangle)-a(\langle i,j+1\rangle)|<+\infty\}.$$

It is trivial that
$$\R^\N/{\rm bv}_0^{(m)}\le_B\R^\N/{\rm bv}_0^{(m+1)}\le_B\R^\N/{\rm bv}_0^{(\infty)}.$$
We do not know whether they are Borel bireducible with each other. We also compare them with the equivalence relations appear in last subsection.

\begin{theorem}\label{bv}
For any $m\in\N$, we have
\begin{enumerate}
\item[(i)] $\R^\N/\ell_1\not\le_B\R^\N/{\rm cs}^{(m)}$,
\item[(ii)] $\R^\N/c_0\not\le_B\R^\N/{\rm bv}_0^{(\infty)}$,
\item[(iii)] $\R^\N/{\rm bv}_0^{(\infty)}<_B\R^\N/\ell_1\otimes\R^\N/c_0$.
\end{enumerate}
\end{theorem}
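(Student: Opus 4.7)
The plan is to prove the three parts in order, deriving (iii) from (ii). For the reduction direction of (iii), I would define the continuous linear map $\theta:\R^\N\to\R^\N\times\R^\N$ by $\theta(a)=(\delta a,a)$, where $(\delta a)(\langle i,j\rangle)=a(\langle i,j\rangle)-a(\langle i,j+1\rangle)$; since $a-b\in{\rm bv}_0^{(\infty)}$ is equivalent to $\delta(a-b)\in\ell_1$ together with $a-b\in c_0$, this $\theta$ is a Borel (indeed continuous and linear) reduction. The non-reduction direction $\R^\N/\ell_1\otimes\R^\N/c_0\not\le_B\R^\N/{\rm bv}_0^{(\infty)}$ will then follow immediately from (ii), since the embedding $a\mapsto(0,a)$ witnesses $\R^\N/c_0\le_B\R^\N/\ell_1\otimes\R^\N/c_0$, and composing a hypothetical reduction of the product into $\R^\N/{\rm bv}_0^{(\infty)}$ would yield $\R^\N/c_0\le_B\R^\N/{\rm bv}_0^{(\infty)}$, contradicting (ii).

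For parts (i) and (ii) my approach mirrors the proof of Lemma \ref{PD}: assume a Borel reduction exists, apply Lemma \ref{useful2} to a carefully chosen restriction $E:=(\R^\N/\ell_1)\upharpoonright\prod_nF_n$, respectively $(\R^\N/c_0)\upharpoonright\prod_nF_n$, with $E_0\subseteq E$ to obtain an additive reduction $\psi$ built from block maps $H_{n_k}$, and extract a contradiction from the mismatch between source and target. For (i) I would take $F_n=\{-1/n,0,1/n\}$ and realize the target as the decomposition relation $E({\rm cs}(\R^m),(X_n))$ with each $X_n\cong\R^m$; Lemma \ref{useful2} then yields an infinite $I=\{n_k\}$ and blocks $h_k^\pm=H_{n_k}(\pm1/n_k)-H_{n_k}(0)\in(\R^m)^{l_{k+1}-l_k}$.

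The central technical step is a sparse-subset argument showing that the cumulative block norms $\Phi_k^\pm=\max_l\|\sum_{i\le l}(h_k^\pm)^{(i)}\|_{\R^m}$ tend to zero, so that in particular the block sums $\tau_k^\pm\in\R^m$ tend to zero. Because $\ell_1$-membership is sign-invariant, once $I$ is arranged so that $\sum_k1/n_k=\infty$, the additivity of $\psi$ forces the target series $\sum_kh_k^{\epsilon(k)}$ to diverge in ${\rm cs}(\R^m)$ for every $\epsilon\in\{-1,1\}^\N$; combined with $\tau_k^\pm\to0$ this violates Dvoretzky--Hanani in the finite-dimensional space $\R^m$. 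For (ii) the same template is to be applied with target $E(\ell_1,(y_n^\infty))$, using the $c_0$-character of the source together with the ${\rm bv}$-component of ${\rm bv}_0^{(\infty)}$ to play the role analogous to finite-dimensionality in (i).

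The main obstacle I foresee is arranging that Lemma \ref{useful2} does not drop us into a sparse regime where $\sum_k1/n_k<\infty$ and the restricted source equivalence collapses to a triviality, since there Dvoretzky--Hanani loses all its bite. To avoid this I expect either to refine $F_n$, possibly by re-indexing $\R^\N$ as $(\R^\N)^\N$ so as to encode an $\ell_1$-valued perfectly divergent series with $\|x_k\|_1\to0$ directly inside the source in the spirit of the remark following Lemma \ref{useful2}, or to apply Lemma \ref{useful2} a second time after passing to a denser sub-index-set; an analogous delicate choice will be needed in (ii), where the conditionality of $(y_n^\infty)$ means the sign freedom driving the contradiction has to be extracted from the specific combinatorial structure of $(y_n^\infty)$ rather than from the target geometry.
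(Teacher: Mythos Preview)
Your plan for (iii) is fine, and your direct reduction $\theta(a)=(\delta a,a)$ is arguably cleaner than the paper's route via Theorem~\ref{finite}; the strictness in both cases comes from (ii), exactly as you say.

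For (i), the paper's argument follows your template but avoids precisely the obstacle you name by choosing $F_n=\{i/2^n:0\le i\le 2^n\}$ rather than your three-point sets. With your $F_n=\{-1/n,0,1/n\}$, Lemma~\ref{useful2} may well return $I=\{n_k\}$ with $\sum_k1/n_k<\infty$, and then $E|_I$ collapses and nothing can be extracted. With the paper's rich $F_n$, no matter which $I$ the lemma returns, one can \emph{afterwards} pick $i_{n_k}\in\{0,\dots,2^{n_k}\}$ so that $x_k=i_{n_k}/2^{n_k}\to 0$ while $\sum_kx_k=+\infty$, and then run the Dvoretzky--Hanani argument of Lemma~\ref{PD} verbatim. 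So the fix is not to reapply the lemma or re-index the source, but simply to enlarge $F_n$ so that the divergent series can be chosen after $I$ is fixed.

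For (ii) your proposal is off track. The paper does \emph{not} use Lemma~\ref{useful2} here at all; it uses a Borel-complexity argument. One writes ${\rm bv}_0^{(\infty)}={\rm bv}^{(\infty)}\cap A$ where ${\rm bv}^{(\infty)}$ is $F_\sigma$ and $A=\{a:\forall i\,\exists^\infty j\,(|a(\langle i,j\rangle)|<1/p)\}$ is $G_\delta$ (the equality holds because membership in ${\rm bv}^{(\infty)}$ already forces each $a(\langle i,\cdot\rangle)$ to converge). Hence ${\rm bv}_0^{(\infty)}$ is $D_2({\bf\Sigma}^0_2)\subseteq{\bf\Delta}^0_3$, and Theorem~8.5.2 and Lemma~8.5.3 of \cite{gaobook} then give $\R^\N/c_0\not\le_B\R^\N/{\rm bv}_0^{(\infty)}$. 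Your additive-reduction sketch for (ii) is left entirely heuristic: there is no finite-dimensional target, so Dvoretzky--Hanani is unavailable, and you do not say what combinatorial property of the blocks $H_{n_k}$ in $\ell_1$ would replace it. The complexity argument bypasses all of this in two lines.
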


\begin{proof}
(i) The proof is combined proofs of Theorem \ref{unconditional} and Lemma \ref{PD}, So we omit some similar arguments.

Since $(\R^m)^\N/{\rm cs}(\R^m)\sim_B\R^\N/{\rm cs}^{(m)}$, we assume for contradiction that $\R^\N/\ell_1\le_B(\R^m)^\N/{\rm cs}(\R^m)$. Denote $F_n=\{i/2^n:i=0,1,\cdots,2^n\}$. From Lemma \ref{useful2}, we can find an infinite set $I\subseteq\N$, a natural number $l_n\ge 1$ and a map $H_n:F_n\to(\R^m)^{l_n}$ for each $n\in I$, satisfying the following requirements. Letting by $(n_k)$ the strictly increasing enumeration of $I$, we define $\psi$ as
$$\psi(a)=H_{n_0}(a(n_0))^\smallfrown H_{n_1}(a(n_1))^\smallfrown H_{n_2}(a(n_2))^\smallfrown\cdots,$$
for any $a\in\prod_{n\in I}F_n$, then we have, for $a,b\in\prod_{n\in I}F_n$,
$$\sum_{n\in I}|a(n)-b(n)|<+\infty\iff(\psi(a)-\psi(b))\in{\rm cs}(\R^m).$$

Choose an $i_n\in\{0,1,\cdots,2^n\}$ for each $n\in I$ such that
$$\lim_{k\to\infty}\frac{i_{n_k}}{2^{n_k}}=0,\quad\sum_{n\in I}\frac{i_n}{2^n}=+\infty.$$
Now denote $x_k=i_{n_k}/2^{n_k}$ for each $k\in\N$. The rest part of proof is almost word for word a copy of the proof of Lemma \ref{PD}.

(ii) By Theorem 8.5.2 and Lemma 8.5.3 of \cite{gaobook}, we only need to prove that ${\rm bv}_0^{(\infty)}$ is ${\bf\Sigma}^0_3$. Define a subset $A\subseteq\R^\N$ satisfying that, for any $a\in\R^\N$,
$$\begin{array}{ll}a\in A &\iff\forall i\in\N(\mbox{there is a subsequence of }a(\langle i,\cdot\rangle)\mbox{ converges to }0)\cr
&\iff\forall i,p,N\in\N\exists j>N(|a(\langle i,j\rangle)|<1/p).\end{array}$$
Note that, for any $a\in{\rm bv}^{(\infty)}$, we have $\lim_{j\to\infty}a(\langle i,j\rangle)$ converges for each $i\in\N$, and $a(\langle i,\cdot\rangle)$ is uniformly convergent to $0$ as $i\to\infty$. It follows that ${\rm bv}_0^{(\infty)}={\rm bv}^{(\infty)}\cap c_0={\rm bv}^{(\infty)}\cap A$. Since ${\rm bv}^{(\infty)}$ is $F_\sigma$ and $A$ is $G_\delta$, we see that ${\rm bv}_0^{(\infty)}$ is ${\bf\Delta}^0_3$. Indeed, ${\rm bv}_0^{(\infty)}$ is a $D_2({\bf\Sigma}^0_2)$ set (for definition of $D_2({\bf\Sigma}^0_2)$, see, e.g., \cite{kechris}, 22.E).

(iii) $\R^\N/{\rm bv}_0^{(\infty)}<_B\R^\N/\ell_1\otimes\R^\N/c_0$ follows from Theorem \ref{finite} and (ii).
\end{proof}

However, we do not know whether $\R^\N/\ell_1\le_B\R^\N/{\rm cs}^{(\infty)}$.

For $p>1$, $(y^1_n)$ is also a sequence in $\ell_p$, but not a basis of $\ell_p$. We have
$$\coef(\ell_p,(y^1_n))=c_0\cap\{a\in\R^\N:\sum_n|a(n)-a(n+1)|^p<+\infty\}.$$
For the Borel reducibility, we claim $E(\ell_p,(y^1_n))\sim_B\R^\N/\ell_p\otimes\R^\N/c_0$. This is because of Theorem \ref{finite} and the following $\theta$ which witnesses $\R^\N/\ell_p\otimes[0,1]^\N/c_0\le_B E(\ell_p,(y^1_n))$. For $a\in\R^\N,b\in[0,1]^\N$ and $n\in\N$, we define
$$\theta(a,b)(n)=\left\{\begin{array}{ll}b(0), & 0\le n\le 3,\cr a(k)+b(k), & n=2^{k+2},\cr
b(k)+\frac{(i-1)(b(k+1)-b(k))}{2^{k+2}-2},& n=2^{k+2}+i,1\le i<2^{k+2}.\end{array}\right.$$
Comparing with Theorem \ref{unconditional}, we are interested in these examples because the unit vector basis of $\coef(\ell_p,(y^1_n))$ is conditional while $\R^\N/\ell_p\otimes\R^\N/c_0$ is generated by an unconditional basis.

\subsection{Rearrangements of bases}

\begin{lemma}\label{rearrange}
Let $(x_n)$ be a basis of a Banach space $X$, $\pi$ a permutation on $\N$. If $(x_{\pi(n)})$ is also a basis, then $E(X,(x_n))\sim_B E(X,(x_{\pi(n)}))$.
\end{lemma}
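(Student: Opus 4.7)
The natural candidate map is the pull-back by $\pi$: define $\theta:\R^\N\to\R^\N$ by $\theta(a)(n)=a(\pi(n))$. This is a homeomorphism of $\R^\N$ (with inverse $c\mapsto c\circ\pi^{-1}$) and is clearly linear, so $\theta(a)-\theta(b)=\theta(a-b)$. Thus the entire problem reduces to showing
$$a\in\coef(X,(x_n))\iff\theta(a)\in\coef(X,(x_{\pi(n)})).$$
Once this is established, $\theta$ is a (continuous) reduction of $E(X,(x_n))$ to $E(X,(x_{\pi(n)}))$, and by applying the same fact with $\pi^{-1}$ in place of $\pi$ (using the hypothesis that $(x_{\pi(n)})$ is a basis, so that $(x_{\pi(\pi^{-1}(n))})=(x_n)$ is the permuted basis), we get the reduction in the opposite direction, yielding $E(X,(x_n))\sim_B E(X,(x_{\pi(n)}))$.

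The plan for the equivalence of convergence is to use biorthogonal functionals. Let $(x_n^*)$ and $(y_n^*)$ be the biorthogonal functionals of $(x_n)$ and $(x_{\pi(n)})$ respectively; both are bounded linear functionals on $X$ by the basis property. From $y_n^*(x_{\pi(m)})=\delta_{nm}=x_{\pi(n)}^*(x_{\pi(m)})$ and the fact that $[x_{\pi(m)}]_{m\in\N}=X$, it follows that $y_n^*=x_{\pi(n)}^*$.

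Now suppose $a\in\coef(X,(x_n))$ and set $x=\sum_ma(m)x_m$. By continuity of $x_{\pi(n)}^*$,
$$x_{\pi(n)}^*(x)=\sum_m a(m)x_{\pi(n)}^*(x_m)=a(\pi(n)),$$
so the unique expansion of $x$ in the basis $(x_{\pi(n)})$ is $x=\sum_n y_n^*(x)x_{\pi(n)}=\sum_n a(\pi(n))x_{\pi(n)}$, giving $\theta(a)\in\coef(X,(x_{\pi(n)}))$. The converse is symmetric: if $\sum_n a(\pi(n))x_{\pi(n)}$ converges to some $y\in X$, then $x_m^*(y)=a(m)$ by the same biorthogonality calculation (the only surviving term in the series is $n=\pi^{-1}(m)$), whence $y=\sum_m a(m)x_m$ converges in the basis $(x_n)$, so $a\in\coef(X,(x_n))$.

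There is no real obstacle: the lemma is essentially a bookkeeping statement that the coefficient sequences transform via $\pi$ once we know both sequences are genuine bases. The only subtle point is that without the hypothesis that $(x_{\pi(n)})$ is a basis, one cannot recover $y$ as a series in $(x_n)$ from the partial sums of the rearranged series, so that hypothesis is used precisely to make the backward direction valid and to guarantee that $y_n^*$ exists and equals $x_{\pi(n)}^*$.
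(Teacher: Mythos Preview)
Your proof is correct and follows essentially the same approach as the paper: both define $\theta(a)(n)=a(\pi(n))$ and use the biorthogonal functionals (noting $y_n^*=x_{\pi(n)}^*$) to verify that $a\in\coef(X,(x_n))\iff\theta(a)\in\coef(X,(x_{\pi(n)}))$. Your reduction to a single sequence via linearity and your explicit identification of $y_n^*$ are slightly more detailed than the paper's version, but the argument is the same.
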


\begin{proof}
We define $\theta(a)(n)=a(\pi(n))$ for $a\in\R^\N$ and $n\in\N$. Let $a,b\in\R^\N$. If $a-b\in\coef(X,(x_n))$, we denote $x=\sum_n(a(n)-b(n))x_n$. Since $(x_{\pi(n)})$ is also a basis, we have
$$x=\sum_nx_{\pi(n)}^*(x)x_{\pi(n)}=\sum_n(a(\pi(n))-b(\pi(n)))x_{\pi(n)}.$$
So $\theta(a)-\theta(b)\in\coef(X,(x_{\pi(n)}))$. By the same arguments, we can show that $\theta(a)-\theta(b)\in\coef(X,(x_{\pi(n)}))$ implies $a-b\in\coef(X,(x_n))$ too.
\end{proof}

\begin{corollary}
Let $(x_n)$ be an unconditional basis of a Banach space $X$, then for any permutation $\pi$ on $\N$, we have $E(X,(x_n))\sim_B E(X,(x_{\pi(n)}))$.
\end{corollary}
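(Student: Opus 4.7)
The plan is essentially a one-line reduction to Lemma \ref{rearrange}. Recall that the paper defines a basis $(x_n)$ to be \emph{unconditional} precisely when $(x_{\pi(n)})$ is a basis of $X$ for every permutation $\pi:\N\to\N$. So for an unconditional basis $(x_n)$ and an arbitrary permutation $\pi$, the hypothesis of Lemma \ref{rearrange} is automatically satisfied.

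Concretely, I would fix $\pi$, invoke the definition of unconditionality to conclude that $(x_{\pi(n)})$ is a basis of $X$, and then apply Lemma \ref{rearrange} directly to obtain $E(X,(x_n))\sim_B E(X,(x_{\pi(n)}))$. No new Borel reduction needs to be constructed, since the reduction $\theta(a)(n)=a(\pi(n))$ built in the proof of Lemma \ref{rearrange} already serves both directions (its inverse corresponds to $\pi^{-1}$, and $\pi^{-1}$ is also a permutation, so $(x_{\pi^{-1}(\pi(n))})=(x_n)$ is trivially a basis too).

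There is no real obstacle here; the only thing to verify is that the definition of unconditional basis in this paper is the permutation-invariance formulation (which it is, as stated in Section 2), so Lemma \ref{rearrange} applies without modification.
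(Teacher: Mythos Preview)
Your proposal is correct and matches the paper's approach exactly: the corollary is stated without proof because it is an immediate consequence of Lemma~\ref{rearrange} together with the permutation-invariance definition of unconditionality given in Section~2. Nothing further is needed.
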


Now we consider rearrangements of the bases $(x^m_n)$ and $(x^\infty_n)$ of $c_0$. Since they are conditional bases, there must be some rearrangements are not bases. However, we always have:

\begin{theorem}
Let $\pi:\N\to\N$ be a permutation. For $m\ge 1$ or $m=\infty$, we have $\R^\N/{\rm cs}^{(m)}\sim_BE(c_0,(x^m_{\pi(n)}))$.
\end{theorem}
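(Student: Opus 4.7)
The plan is to establish both Borel reductions by combining Theorem \ref{finite} for the ``$\le_B$'' direction with a direct subsequence construction for the ``$\ge_B$'' direction, first handling $m=1$ and then extending to arbitrary $m$ via the block decomposition of $c_0$.

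For $E(c_0,(x^m_{\pi(n)}))\le_B\R^\N/{\rm cs}^{(m)}$, I would apply Theorem \ref{finite} to the basis $(x^m_n)$ of $c_0$ and the sequence $y_k=x^m_{\pi(k)}$. The column-finiteness hypothesis is immediate since each $y_k$ is a single basis vector (only one $n$ has $\alpha_{nk}\ne 0$), yielding $E(c_0,(x^m_{\pi(n)}))\le_B\R^\N/{\rm cs}^{(m)}\otimes\R^\N/c_0$. I would then absorb the $\R^\N/c_0$ factor via the three-way interleaving $\psi(a,b)(3k)=a(k)$, $\psi(a,b)(3k+1)=b(k)$, $\psi(a,b)(3k+2)=-b(k)$: the partial sums cycle through $\sum_{k<K}a(k)$, $\sum_{k\le K}a(k)$, $\sum_{k\le K}a(k)+b(K)$, so $\sum_n\psi(a,b)(n)$ converges iff $a\in{\rm cs}$ and $b\in c_0$, giving $\R^\N/{\rm cs}\otimes\R^\N/c_0\le_B\R^\N/{\rm cs}$; applying this to one factor of $\R^\N/{\rm cs}^{(m)}=(\R^\N/{\rm cs})^m$ absorbs the $c_0$.

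For $\R^\N/{\rm cs}^{(m)}\le_B E(c_0,(x^m_{\pi(n)}))$, the core is the $m=1$ case. Given a permutation $\sigma$ of $\N$, I would greedily pick a subsequence $n_0<n_1<\cdots$ with $\sigma(n_0)<\sigma(n_1)<\cdots$, which always exists since $\sigma$ is a bijection of $\N$. A direct coordinate computation gives: for any strictly increasing integer sequence $(m_k)$, the $j$-th coordinate of the $K$-th partial sum of $\sum_k a(k)x^1_{m_k}$ equals $\sum_{k=K^*_j}^K a(k)$ with $K^*_j=\min\{k:m_k\ge j\}$, and the $c_0$-norm of the error from the pointwise limit equals $\sup_{l\ge K}|\sum_{k>l}a(k)|$, so the series converges in $c_0$ iff $a\in{\rm cs}$. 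Defining $\theta(a)(n_k)=a(k)$ and $\theta(a)(n)=0$ otherwise yields the reduction. For $m<\infty$, decompose $c_0\cong\bigoplus_{i=1}^m c_0^{(i)}$; each $x^m_{\pi(n)}$ lies in exactly one summand, and the max-norm convergence decouples, so running the $m=1$ construction inside each $N_i=\{n:\pi(n)\in c_0^{(i)}\}$ and combining the results gives the reduction. For $m=\infty$, the same decomposition into $(\bigoplus_i c_0)_0$ is used, but the within-component subsequences must be chosen carefully across $i$ to upgrade the per-component convergence to uniform convergence in the $(\bigoplus_i c_0)_0$-norm, matching the cross-component $c_0$-decay built into ${\rm cs}^{(\infty)}$; arranging this is the main technical obstacle.
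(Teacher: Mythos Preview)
Your $\le_B$ direction is exactly the paper's argument: Theorem~\ref{finite} applied with basis $(x^m_n)$ and $y_k=x^m_{\pi(k)}$, followed by the three-way interleaving $\theta(a,b)(3k)=a(k)$, $\theta(a,b)(3k+1)=b(k)$, $\theta(a,b)(3k+2)=-b(k)$ to absorb the $\R^\N/c_0$ factor. This is correct, and your remark that the absorption propagates from $m=1$ to finite $m$ via $\R^\N/{\rm cs}^{(m)}\sim_B(\R^\N/{\rm cs})^m$ is fine (and extends easily to $m=\infty$).

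For the $\ge_B$ direction you take a different, more hands-on route than the paper. The paper simply writes ``By Theorem~\ref{finite}, we only need to show $\R^\N/{\rm cs}^{(m)}\otimes\R^\N/c_0\le_B\R^\N/{\rm cs}^{(m)}$'' and treats both directions as consequences of Theorem~\ref{finite} together with the absorption; the intended second application is inside the Banach space $L'=\coef(c_0,(x^m_{\pi(n)}))$ with its unit-vector basis, using $y_k=e_{\pi^{-1}(k)}$. Your explicit increasing-subsequence construction is a legitimate alternative, and your core computation for $m=1$ (that for any strictly increasing $(m_k)$ the series $\sum_k a(k)x^1_{m_k}$ converges in $c_0$ iff $a\in{\rm cs}$) is correct; the decoupling into $m$ summands for finite $m$ also works as you describe, since the $c_0$-norm on $\bigoplus_{i=1}^m c_0$ is the max of the component norms and the partial sums split accordingly.

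The genuine gap is the $m=\infty$ case, which you flag but do not close. Here the difficulty is real: running your per-component subsequence argument inside each $N_i$ yields per-component convergence, but the $(\bigoplus_i c_0)_0$-norm demands \emph{uniform} smallness of the component tails, and nothing in your construction forces this. One has to choose the subsequences $\{n^i_l\}$ jointly across $i$ (for instance so that the resulting global sequence $(n_k)$ has $\pi(n_k)$ increasing and visits every component infinitely often in a controlled way) and then verify that the resulting reduction really reproduces the ${\rm cs}^{(\infty)}$ condition. The paper's approach via Theorem~\ref{finite} is uniform in $m$ and sidesteps this bookkeeping entirely, which is its main advantage over your component-by-component construction.
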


\begin{proof}
By Theorem \ref{finite}, we only need to show $\R^\N/{\rm cs}^{(m)}\otimes\R^\N/c_0\le_B\R^\N/{\rm cs}^{(m)}$. We only prove for $m=1$, since other cases are similar. For any $a,b\in\R^\N$ and $n\in\N$, define
$$\theta(a,b)(n)=\left\{\begin{array}{ll}a(k), & n=3k,\cr b(k), & n=3k+1,\cr -b(k), & n=3k+2.\end{array}\right.$$
It is easy to see that $\theta$ is a disired Borel reduction.
\end{proof}

The situation is different when we consider rearrangements of the basis $(y^1_n)$ of $\ell_1$. We only present a special rearrangement of $(y^1_n)$ as:
$$y^1_0,y^1_2,y^1_1,\quad y^1_4,y^1_6,y^1_3,\quad y^1_8,y^1_{10},y^1_5,\quad\cdots.$$
More precisely, define a permutation $\pi_0:\N\to\N$ as follows:
$$\pi_0(n)=\left\{\begin{array}{ll}4k, & n=3k,\cr 4k+2, & n=3k+1,\cr 2k+1, & n=3k+2,\end{array}\right.$$
then we consider the rearranged sequence $(y^1_{\pi_0(n)})$.

\begin{example}
$E(\ell_1,(y^1_{\pi_0(n)}))\sim_B\R^\N/\ell_1\otimes\R^\N/c_0$, i.e., $E(\ell_1,(y^1_{\pi_0(n)}))$ has the highest allowable complexity of Theorem \ref{finite}.
\end{example}

\begin{proof}
It is well known that $\R^\N/c_0\sim_B[0,1]^\N/c_0$. Thus, by Theorem \ref{finite}, we only need to show $\R^\N/\ell_1\otimes[0,1]^\N/c_0\le_B E(\ell_1,(y^1_{\pi_0(n)}))$. For any $a\in\R^\N,b\in[0,1]^\N$, and $j\in\N$, we define
$$\theta(a,b)(j)=\left\{\begin{array}{ll}a(l), & \pi_0(j)=2^{l+1},\cr (2^{l+1}-1)^{-1}, & 2^{l+2}+2\le\pi_0(j)\le 2^{l+2}+2[(2^{l+1}-1)b(l)],\cr 0, &\mbox{otherwise}.\end{array}\right.$$
Let $a_1,a_2\in\R^\N$ and $b_1,b_2\in[0,1]^\N$. Denote $d=\theta(a_1,b_1)-\theta(a_2,b_2)$. Then
$$d\in\coef(\ell_1,(y^1_{\pi_0(n)}))\iff\left\{\begin{array}{l}(d(\pi_0^{-1}(n)))\in{\rm bv}_0={\rm bv}\cap c_0,\cr
\sum_jd(j)y^1_{\pi_0(j)}=\sum_nd(\pi_0^{-1}(n))y^1_n.\end{array}\right.$$
It is easy to see that
$$(d((\pi_0^{-1}(n)))\in c_0\iff d\in c_0\iff a_1-a_2\in c_0,$$
$$\begin{array}{ll}(d(\pi_0^{-1}(n)))\in{\rm bv}&\iff2\sum_l\left(|a_1(l)-a_2(l)|+(2^{l+1}-1)^{-1}\right)<+\infty\cr
&\iff a_1-a_2\in\ell_1.\end{array}$$
Thus
$$(d(\pi_0^{-1}(n)))\in{\rm bv}_0\iff a_1-a_2\in\ell_1.$$
Now suppose $a_1-a_2\in\ell_1$. Then there exists $x\in\ell_1$ such that $x=\sum_nd(\pi_0^{-1}(n))y^1_n$. Note that $\lim_{j\to\infty}d(j)=0$. We have
$$\sum_jd(j)y^1_{\pi_0(j)}=x\iff\lim_{k\to\infty}\sum_{j<3k}d(j)y^1_{\pi_0(j)}=x.$$
By the definition of $\pi_0$,
$$\sum_{j<3k}d(j)y^1_{\pi_0(j)}=\sum_{n<2k}d(\pi_0^{-1}(n))y^1_n+\sum_{k\le i<2k}d(\pi_0^{-1}(2i))y^1_{2i}.$$
If $k=2^{l+1}$ for some $l\in\N$, then
$$\begin{array}{ll}&\|\sum_{2^{l+1}\le i<2^{l+2}}d(\pi_0^{-1}(2i))y^1_{2i}\|\cr
=&2|a_1(l+1)-a_2(n+1)|+2(2^{l+1}-1)^{-1}\left|[(2^{l+1}-1)b_1(l)]-[(2^{l+1}-1)b_2(l)]\right|;\end{array}$$
otherwise, we have $2^{l+1}<k<2^{l+2}$ for some $l\in\N$, then
$$\begin{array}{ll}&\|\sum_{k\le i<2k}d(\pi_0^{-1}(2i))y^1_{2i}\|\cr
\le&\|\sum_{k\le i<2^{l+2}}d(\pi_0^{-1}(2i))y^1_{2i}\|+\|\sum_{2^{l+2}\le i<2k}d(\pi_0^{-1}(2i))y^1_{2i}\|\cr
\le&\|\sum_{2^{l+1}\le i<2^{l+2}}d(\pi_0^{-1}(2i))y^1_{2i}\|+\|\sum_{2^{l+2}\le i<2^{l+3}}d(\pi_0^{-1}(2i))y^1_{2i}\|.\end{array}$$
Hence
$$\sum_jd(j)y^1_{\pi_0(j)}=x\iff\lim_{k\to\infty}\sum_{k\le i<2k}d(\pi_0^{-1}(2i))y^1_{2i}=0\iff b_1-b_2\in c_0.$$
To sum up, $\theta$ is a desired Borel reduction.
\end{proof}

\section{James' space and F.D.D. equivalence relations}

James' space $J$ serves as an example of a non-reflexive space whose double dual is isomorphic to itself. Furthermore, it is also an example of a space with a basis has no unconditional basis (see, e.g., \cite{LT}, Example 1.d.2). In this section, we wish to compare $\R^\N/\ell_p$ and $\R^\N/J$. For this purpose, F.D.D. equivalence relations turn out to be an useful tool.

For $a\in\R^\N$, denote
$$\begin{array}{ll}\|a\|_J=\frac{1}{\sqrt{2}}\sup[&\!\!\!\!\!(a(n_1)-a(n_2))^2+(a(n_2)-a(n_3))^2+\cdots\cr
&+(a(n_{m-1})-a(n_m))^2+(a(n_m)-a(n_1))^2]^{1/2},\end{array}$$
where the supremum taken over all choices of $m$ and $n_1<n_2<\cdots<n_m$. Then James' space defined as
$$J=c_0\cap\{a\in\R^\N:\|a\|_J<+\infty\}.$$
Since the unit vector basis of $J$ is not boundedly complete, by Corollary \ref{fsigma}, $J$ is not $F_\sigma$. Similar to (ii) of proof of Theorem \ref{bv}, we can see that $J$ is ${\bf\Delta}^0_3$. These imply that
$$\R^\N/J\not\le_B\R^\N/\ell_\infty,\quad\R^\N/c_0\not\le_B\R^\N/J.$$

Recall that $\ell^n_p$ is $\R^n$ equipped with the $\ell_p$ norm. By the same sprit, we denote by $J^n$ the $n$-dimensional space equipped with the following norm:
$$\begin{array}{ll}&\|(r_0,\cdots,r_{n-1})\|_J\stackrel{\rm Def}{=}\|(r_0,\cdots,r_{n-1},0,0,\cdots)\|_J\cr
=&\frac{1}{\sqrt{2}}\sup[(r_{n_1}-r_{n_2})^2+\cdots+(r_{n_{m-1}}-r_{n_m})^2+r_{n_m}^2+r_{n_1}^2]^{1/2},\end{array}$$
where the supremum taken over all choices of $m$ and $n_1<n_2<\cdots<n_m<n$. Note that $(J^n)$ is a finite dimensional decomposition of $(\bigoplus_{n\ge 1}J^n)_2$.  Now we consider the F.D.D. equivalence relation $E((\bigoplus_{n\ge 1}J^n)_2,(J^n))$.

Let $(e_{n_k^*})$ be a subsequence of the unit vector basis $(e_n)$ such that, if $k=1+2+\cdots+l$ for some $l\ge1$, then $1+n_{k-1}^*<n_k^*$; otherwise $1+n_{k-1}^*=n_k^*$. The following is one of such examples:
$$e_0,\quad e_2,e_3,\quad e_5,e_6,e_7,\quad e_9,e_{10},e_{11},e_{12},\quad\cdots.$$
It is straightforward to check that there is a canonical Lipschitz isomorphic $\Phi$ from $(\bigoplus_{n\ge 1}J^n)_2$ to the closed linear span $[e_{n_k^*}]_{k\in\N}$ satisfying $\|x\|\le\|\Phi(x)\|_J\le\sqrt{2}\|x\|$ for $x\in(\bigoplus_{n\ge 1}J^n)_2$. Thus
$$E(\left(\bigoplus_{n\ge 1}J^n\right)_2,(J^n))\sim_BE(J,(e_{n_k^*})).$$

Recall that a basis $(x_n)$ of a Banach space $X$ is {\it symmetric} if, for any permutation $\pi:\N\to\N$, $(x_{\pi(n)})$ is equivalent to $(x_n)$. It is well known that every symmetric basis is unconditional and semi-normalized. From Proposition 3.a.3 of \cite{LT}, we know that, for any injection $\sigma:\N\to\N$, $(x_{\sigma(n)})$ is also equivalent to $(x_n)$.

\begin{lemma}\label{cube}
Let $(x_n)$ be a symmetric basis of Banach space $X$. Then
$$E(X,(x_n))\sim_B[0,1]^\N/\coef(X,(x_n)).$$
\end{lemma}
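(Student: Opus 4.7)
The direction $[0,1]^\N/\coef(X,(x_n))\le_B E(X,(x_n))$ is immediate from the inclusion $[0,1]^\N\hookrightarrow\R^\N$, which is itself a continuous reduction. The substance is the reverse inequality, for which I propose an explicit continuous map $\theta:\R^\N\to[0,1]^\N$. Write $L=\coef(X,(x_n))$ for short, and fix a bijection $\N\leftrightarrow\N\times\Z$ so as to identify $[0,1]^\N$ with $[0,1]^{\N\times\Z}$. Define
$$\theta(a)(n,k)=\max\bigl(0,\min(1,a(n)-k)\bigr),$$
a clipped ramp in $k$ sitting at height $1$ for $k\le a(n)-1$, zero for $k\ge a(n)$, and linear between. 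Two features drive the reduction: the telescoping identity $\sum_k(\theta(a)(n,k)-\theta(b)(n,k))=a(n)-b(n)$, and the fact that $|\theta(a)(n,k)-\theta(b)(n,k)|\in[0,1]$ is supported on at most $\lceil|a(n)-b(n)|\rceil+1$ consecutive values of $k$.

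Set $c_n=|a(n)-b(n)|$ and $d_{n,k}=|\theta(a)(n,k)-\theta(b)(n,k)|$. Since $(x_n)$ is unconditional, $a-b\in L$ iff $(c_n)\in L$, and $\theta(a)-\theta(b)\in L$ iff $d\in L$, where $d$ is viewed as a sequence on $\N$ via the chosen bijection; by the symmetric-basis property the resulting basis on $\N\times\Z$ is equivalent to $(x_n)$ (via any injection $\N\to\N$). So it suffices to show $d\in L\iff(c_n)\in L$.

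For $(c_n)\in L\Rightarrow d\in L$, note that $L\subseteq c_0$ since $(x_n)$ is semi-normalized, so $c_n\to 0$; apart from finitely many blocks, each block has at most two nonzero $d$-entries, each bounded by $c_n$. Splitting $d=d^{(1)}+d^{(2)}$ by unconditionality so that each $d^{(j)}$ has at most one nonzero entry per block, the symmetric/injection property lets me view each $d^{(j)}$ as an $\N$-sequence with entries dominated pointwise by $(c_n)$; the comparison principle for unconditional bases (if $|e_k|\le|f_k|$ and $f\in L$ then $e\in L$) then puts each $d^{(j)}$ in $L$, the residual finitely supported part belonging to $L$ trivially. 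For the converse I argue the contrapositive. If $(c_n)\notin L$, either $c_n\not\to 0$, in which case in infinitely many blocks the largest $d$-entry is bounded below by a positive constant (a block with $c_n\ge 1$ contributes an entry equal to $1$, and a block with $\epsilon\le c_n<1$ contributes an entry of size at least $c_n/2$), so $d\notin c_0\supseteq L$; or $c_n\to 0$ but $(c_n)\notin L$, in which case for $n$ large the maximum $d$-entry in block $n$ is at least $c_n/2$, and the resulting one-per-block sequence, transported to $\N$ by symmetry, pointwise dominates $(c_n/2)\notin L$, hence itself lies outside $L$ by comparison, forcing $d\notin L$.

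The main obstacle is the symmetric/unconditional bookkeeping translating sequences on $\N\times\Z$ to sequences on $\N$; the inputs I will need are, explicitly, (i) for unconditional bases, membership in $L$ is stable under restriction to arbitrary index subsets and under pointwise domination; (ii) for symmetric bases, $(x_{\sigma(n)})_{n\in\N}$ is equivalent to $(x_n)_{n\in\N}$ for every injection $\sigma:\N\to\N$ (Proposition 3.a.3 of \cite{LT}), so the coefficient condition depends only on the multiset of values; and (iii) $\ell_1\subseteq L\subseteq c_0$ for semi-normalized symmetric bases. With these in hand the above argument delivers the continuous reduction $\theta$ and hence the required $E(X,(x_n))\le_B[0,1]^\N/\coef(X,(x_n))$.
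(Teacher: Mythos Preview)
Your proposal is correct and is essentially the paper's own proof: the reduction map $\theta(a)(n,k)=\max(0,\min(1,a(n)-k))$ is literally the same clipped ramp the paper uses (with indices relabeled), and the verification invokes the same ingredients---unconditionality for comparison/restriction, symmetry via injections (Proposition~3.a.3 of \cite{LT}), and $\ell_1\subseteq L\subseteq c_0$. The only cosmetic difference is that the paper organizes the case analysis by the value of $|[a(k)]-[b(k)]|\in\{0,1,\ge 2\}$, whereas you run a forward/contrapositive argument using the bounds $d_{n,k}\le c_n$ and $\max_k d_{n,k}\ge c_n/2$; these are two presentations of the same computation.
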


\begin{proof}
Fix a bijection $\langle\cdot,\cdot\rangle:\N\times\Z\to\N$. For any $a\in\R^\N$ and $k,l\in\N$, define
$$\theta(a)(\langle k,l\rangle)=\left\{\begin{array}{ll}0, & a(k)<l,\cr a(k)-l, & l\le a(k)<l+1,\cr 1, & a(k)\ge l+1.\end{array}\right.$$
For any $a,b\in\R^\N$, we split $\N$ into three sets
$$\begin{array}{ll}I_0&=\{k\in\N:[a(k)]=[b(k)]\},\cr
I_1&=\{k\in\N:|[a(k)]-[b(k)]|=1\},\cr
I_2&=\{k\in\N:|[a(k)]-[b(k)]|\ge 2\}.\end{array}$$

For $k\in I_0$, denote $l_k=[a(k)]=[b(k)]$. Then
$$(\theta(a)-\theta(b))(\langle k,l_k\rangle)=a(k)-b(k),$$
while $(\theta(a)-\theta(b))(\langle k,l\rangle)=0$ for $l\ne l_k$.

For $k\in I_1$, denote $l_k=\max\{[a(k)],[b(k)]\}$. Then
$$|(\theta(a)-\theta(b))(\langle k,l_k\rangle)|+|(\theta(a)-\theta(b))(\langle k,l_k-1\rangle)|=|a(k)-b(k)|,$$
while $(\theta(a)-\theta(b))(\langle k,l\rangle)=0$ for $l\ne l_k,l_k-1$.

For $k\in I_2$, we have $|a(k)-b(k)|\ge 1$ and $|(\theta(a)-\theta(b))(\langle k,l\rangle)|=1$ for some $l$.

If $I_2$ is infinite, since $(x_n)$ is semi-normalized, neither $a-b$ nor $\theta(a)-\theta(b)$ is in $\coef(X,(x_n))$. If $I_2$ is finite, without loss of generality, we can assume $I_2=\emptyset$, and both $I_0$, $I_1$ are infinite. Because $(x_n)$ is unconditional, by Proposition 1.c.6 of \cite{LT}, we have
$$\begin{array}{ll}&\sum_k(a(k)-b(k))x_k\mbox{ converges}\cr
\iff&\sum_k|a(k)-b(k)|x_k\mbox{ converges}\cr
\iff&\sum_{k\in I_0}|a(k)-b(k)|x_k,\sum_{k\in I_1}|a(k)-b(k)|x_k\mbox{ are convergent}.\end{array}$$
Still because $(x_n)$ is unconditional, the convergency of $\sum_{k\in I_1}|a(k)-b(k)|x_k$ is equivalent to both
$$\sum_{k\in I_1}|(\theta(a)-\theta(b))(\langle k,l_k\rangle)|x_k,\quad\sum_{k\in I_1}|(\theta(a)-\theta(b))(\langle k,l_k-1\rangle)|x_k$$
are convergent. Their convergency are equivalent to both
$$\sum_{k\in I_1}|(\theta(a)-\theta(b))(\langle k,l_k\rangle)|x_{\langle k,l_k\rangle},\quad\sum_{k\in I_1}|(\theta(a)-\theta(b))(\langle k,l_k-1\rangle)|x_{\langle k,l_k-1\rangle}$$
are convergent, since $(x_n)$ is symmetric and both $k\mapsto\langle k,l_k\rangle$, $k\mapsto\langle k,l_k-1\rangle$ are injection. By the same reason
$$\sum_{k\in I_0}|a(k)-b(k)|x_k\mbox{ converges}\iff\sum_{k\in I_0}|(\theta(a)-\theta(b))(\langle k,l_k\rangle)|x_{\langle k,l_k\rangle}\mbox{ converges}.$$
Since $(\theta(a)-\theta(b))(n)=0$ for any $n$ outside the range of $\langle k,l_k\rangle$ and $\langle k,l_k-1\rangle$, we get
$$\begin{array}{ll}&\sum_k(a(k)-b(k))x_k\mbox{ converges}\cr
\iff&\sum_{n}|(\theta(a)-\theta(b))(n)|x_n\mbox{ converges}\cr
\iff&\sum_{n}((\theta(a)-\theta(b))(n))x_n\mbox{ converges}\end{array}$$
as desired.
\end{proof}

\begin{theorem}\label{symmetric}
Let $(x_n)$ be a symmetric basis of Banach space $X$. Then
$$E(X,(x_n))\le_B\R^\N/J\iff E(X,(x_n))\le_BE(\left(\bigoplus_{n\ge 1}J^n\right)_2,(J^n)).$$
\end{theorem}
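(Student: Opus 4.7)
For $(\Leftarrow)$: recall from the earlier discussion that $E((\bigoplus_{n\ge 1}J^n)_2,(J^n))\sim_B E(J,(e_{n_k^*}))$. Since $E(J,(e_{n_k^*}))$ is the Schauder equivalence relation $\R^\N/\coef(J,(e_{n_k^*}))$ whose coset space is contained in $J$, the map $\theta:\R^\N\to\R^\N$ with $\theta(a)(n_k^*)=a(k)$ and $\theta(a)(n)=0$ otherwise exhibits $E(J,(e_{n_k^*}))\le_B\R^\N/J$; composing with the hypothesized reduction yields $E(X,(x_n))\le_B\R^\N/J$.

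For $(\Rightarrow)$, assume $E(X,(x_n))\le_B\R^\N/J$. By Lemma \ref{cube} this gives a reduction $[0,1]^\N/\coef(X,(x_n))\le_B\R^\N/J$; restrict to $\prod_nF_n$ with $F_n=\{i/2^n:0\le i\le 2^n\}$, obtaining a relation $E$ with $E_0(\prod_nF_n)\subseteq E$ since $\coef(X,(x_n))$ contains the finitely supported sequences. Applying Lemma \ref{useful2} to the trivial Schauder decomposition $(\R e_n)$ of $J$ produces an infinite $I=\{n_k:k\in\N\}$, block-maps $H_{n_k}:F_{n_k}\to\R^{m_k}$, and the additive reduction
$$\psi(\alpha)=H_{n_0}(\alpha(n_0))^\smallfrown H_{n_1}(\alpha(n_1))^\smallfrown\cdots$$
of $E|_I$ (with $\mu=0$) to $\R^\N/J$. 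By symmetry of $(x_n)$ via Proposition 3.a.3 of \cite{LT}, every subsequence of $(x_n)$ is equivalent to $(x_n)$; combining with Lemma \ref{cube} yields $E|_I\sim_B E(X,(x_n))$, so it suffices to reduce $E|_I$ to $E((\bigoplus_{n\ge 1}J^n)_2,(J^n))$.

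Define $\widetilde\psi:\prod_{n\in I}F_n\to\prod_kJ^{m_k}$ by $\widetilde\psi(\alpha)(k)=H_{n_k}(\alpha(n_k))-H_{n_k}(0)\in J^{m_k}$, interpreting each block as a vector in the finite-dimensional James space $J^{m_k}$. The target claim is that $\widetilde\psi$ reduces $E|_I$ to $E((\bigoplus_kJ^{m_k})_2,(J^{m_k}))$, which in turn Borel-embeds into $E((\bigoplus_{n\ge 1}J^n)_2,(J^n))$ by padding missing dimensions with zero blocks. Writing $c_k=H_{n_k}(\alpha(n_k))-H_{n_k}(\beta(n_k))$, this amounts to the equivalence
\[
\psi(\alpha)-\psi(\beta)\in J\;\Longleftrightarrow\;\sum_k\|c_k\|_{J^{m_k}}^2<+\infty,
\]
whose $\Leftarrow$ direction follows from a standard block estimate: splitting any index choice into within-block and cross-block parts and bounding cross-block transitions via $|c_k(i)|\le\|c_k\|_{J^{m_k}}$ gives $\|\psi(\alpha)-\psi(\beta)\|_J^2\le C\sum_k\|c_k\|_{J^{m_k}}^2$.

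The main obstacle is the $\Rightarrow$ direction, which does not follow formally from the additive reduction alone---indeed, one can exhibit block sequences with $\|c_0{}^\smallfrown c_1{}^\smallfrown\cdots\|_J<\infty$ yet $\sum\|c_k\|_{J^{m_k}}^2=\infty$. My plan is to invoke the classical fact that every unconditional basic sequence in $J$ is equivalent to the $\ell_2$ unit vector basis (reflecting that the conditional basis of $J$ admits no unconditional analogue). Following the pattern of Theorem \ref{unconditional}, the unconditionality of $(x_n)$ (inherited from symmetry) allows for any $(\alpha,\beta)\in E|_I$ and sign pattern $\sigma\in\{-1,1\}^\N$ the construction of $(\alpha',\beta')\in E|_I$ with $\alpha'-\beta'=\sigma\cdot(\alpha-\beta)$; the block at position $k$ of $\psi(\alpha')-\psi(\beta')$ takes the form $\sigma_k\,\mathrm{sign}(\alpha(n_k)-\beta(n_k))\cdot w_k$, where $w_k=H_{n_k}(|\alpha(n_k)-\beta(n_k)|)-H_{n_k}(0)$. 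Thus $\bigoplus_k\tau_k w_k\in J$ for every $\tau\in\{-1,1\}^\N$, identifying $(w_k)$ (restricted to nonzero indices) as an unconditional basic sequence in $J$, hence $\ell_2$-equivalent; this forces $\sum_k\|w_k\|_{J^{m_k}}^2<+\infty$. The remaining comparison of $\sum\|c_k\|^2$ with $\sum\|w_k\|^2$ is the delicate point; it is handled by reducing pair-relations to single-sequence relations via Lemma \ref{cube} (i.e.\ passing to pairs $(\alpha,0)$ with $\alpha\in[0,1]^\N$, where $c_k$ and $w_k$ coincide), allowing the unconditional-basic-sequence conclusion to be applied directly to $\widetilde\psi$.
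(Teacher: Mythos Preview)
Your setup for $(\Rightarrow)$ via Lemma \ref{useful2} is correct, and you have correctly located the real difficulty: for consecutive blocks $c_k\in\R^{m_k}$ concatenated without separation, membership of $c_0{}^\smallfrown c_1{}^\smallfrown\cdots$ in $J$ is \emph{not} equivalent to $\sum_k\|c_k\|_{J^{m_k}}^2<\infty$. But your proposed repair does not close the gap. First, from ``$\bigoplus_k\tau_k w_k\in J$ for all $\tau$'' you only get that the single series $\sum_k w_k$ converges unconditionally in $J$; this does not make the block basic sequence $(w_k)$ an \emph{unconditional basic sequence} (unconditionality of a basis is a statement about all coefficient sequences, not just the constant one), so the appeal to ``unconditional basic sequences in $J$ are $\ell_2$-equivalent'' is not available. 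Second, even granting an $\ell_2$ estimate for each fixed $\alpha$, you would need the equivalence constants to be uniform in $\alpha$ to obtain a reduction, and nothing in the argument provides that. Third, restricting to pairs $(\alpha,0)$ does make $c_k=w_k$, but a Borel reduction of $E|_I$ must respect \emph{all} pairs $(\alpha,\beta)$; since $\widetilde\psi$ is not a group homomorphism, knowing the $(\alpha,0)$ case does not determine the general case.

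The paper sidesteps the obstacle entirely with a one-line structural trick you are missing: after obtaining the additive reduction $\psi$, throw away the odd-indexed blocks and replace them by zero strings, defining
\[
\psi'(a)=H_{n_0}(a(n_0))^\smallfrown 0^{l_{n_1}}{}^\smallfrown H_{n_2}(a(n_2))^\smallfrown 0^{l_{n_3}}{}^\smallfrown\cdots.
\]
Because each pair of active blocks is now separated by at least one zero coordinate, every cross-block jump in the $J$-norm factors through $0$, and one checks directly that the closed span of the corresponding subsequence $(e_{n'_k})$ of the unit vectors is Lipschitz isomorphic to $\bigl(\bigoplus_kJ^{l_{n_{2k}}}\bigr)_2$. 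Thus $\psi'$ lands in a space where the equivalence you wanted is \emph{automatic}, with no appeal to unconditionality of $(x_n)$ beyond what symmetry already gives. Passing from $\prod_kF_{n_{2k}}/\coef(X,(x_{n_{2k}}))$ back to $E(X,(x_n))$ then uses symmetry and Lemma \ref{cube} exactly as you indicated.
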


\begin{proof}
``$\Leftarrow$'' follows from $E(J,(e_{n_k^*}))\le_B\R^\N/J$. We only prove the ``$\Rightarrow$'' side.

Denote $F_n=\{i/2^n:i=0,1,\cdots,2^n\}$. Since $E(X,(x_n))\le_B\R^\N/J$, from Lemma \ref{useful2}, we can find an infinite set $I\subseteq\N$, a natural number $l_n\ge 1$ and a map $H_n:F_n\to\R^{l_n}$ for each $n\in I$, satisfying the following requirements. Letting by $(n_k)$ the strictly increasing enumeration of $I$, we define $\psi$ as
$$\psi(a)=H_{n_0}(a(n_0))^\smallfrown H_{n_1}(a(n_1))^\smallfrown H_{n_2}(a(n_2))^\smallfrown\cdots,$$
for any $a\in\prod_{n\in I}F_n$, then we have, for $a,b\in\prod_{n\in I}F_n$,
$$\sum_{n\in I}(a(n)-b(n))x_n\mbox{ converges}\iff(\psi(a)-\psi(b))\in J.$$
Now we define, for any $a\in\prod_kF_{n_{2k}}$,
$$\psi'(a)=H_{n_0}(a(0))^\smallfrown(\overbrace{0,\cdots,0}^{l_{n_1}})^\smallfrown H_{n_2}(a(n_2))^\smallfrown(\overbrace{0,\cdots,0}^{l_{n_3}})^\smallfrown\cdots.$$
Set $s_k=l_{n_0}+l_{n_1}+\cdots+l_{n_k}$. Let $(e_{n_k'})$ be the following subsequence of $(e_n)$:
$$e_0,e_1\cdots,e_{s_0-1},\quad e_{s_2},e_{s_2+1},\cdots,e_{s_3-1},\quad e_{s_4},e_{s_4+1},\cdots,e_{s_5-1},\quad\cdots.$$
We still have, for $a,b\in\prod_kF_{n_{2k}}$,
$$\sum_k(a(k)-b(k))x_{n_{2k}}\mbox{ converges}\iff(\psi'(a)-\psi'(b))\in[e_{n_k'}]_{k\in\N}.$$
Therefore, $\psi'$ is a Borel reduction of $\prod_kF_{n_{2k}}$ to $E(J,(e_{n_k'}))$. Note that $[e_{n_k'}]$ is Lipschtz isomorphic to $\left(\bigoplus_kJ^{l_{2k}}\right)_2$, thus Lipschitz embeds into $(\bigoplus_{n\ge 1}J^n)_2$. We get
$$\prod_kF_{n_{2k}}/\coef(X,(x_{n_{2k}}))\le_BE(J,(e_{n_k'}))\le_BE(\left(\bigoplus_{n\ge 1}J^n\right)_2,(J^n)).$$
Since $(x_n)$ is symmetric, $\coef(X,(x_{n_{2k}}))=\coef(X,(x_k))$. Furthermore, since $(x_k)$ is semi-normalized and $\sum_k1/2^{n_{2k}}<+\infty$, we have
$$[0,1]^\N/\coef(X,(x_k))\sim_B\prod_kF_{n_{2k}}/\coef(X,(x_k)).$$
Then Lemma \ref{cube} gives the required result.
\end{proof}

\begin{theorem}
For $p\ge 1$, we have $\R^\N/\ell_p\le_B\R^\N/J\iff p\le 2$.
\end{theorem}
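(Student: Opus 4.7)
\textbf{($\Leftarrow$)} For $p\le 2$ I plan to first invoke the Dougherty--Hjorth reduction $\R^\N/\ell_p\le_B\R^\N/\ell_2$ cited in the introduction, then reduce $\R^\N/\ell_2$ into $\R^\N/J$. For the second step, consider the block sequence $u_k=e_{2k}-e_{2k+1}$ in $J$: a direct computation of $\|\sum_k a_k u_k\|_J$, obtained by plugging the full increasing index sequence into the supremum defining $\|\cdot\|_J$ and bounding the resulting quadratic expression above and below by a multiple of $\sum_k a_k^2$, will show that $(u_k)$ is equivalent to the unit vector basis of $\ell_2$, so $[u_k]_{k\in\N}$ is a closed subspace of $J$ linearly isomorphic to $\ell_2$. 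Since the unit vector basis of $\ell_2$ is subsymmetric, Ma's theorem (Theorem \ref{ma}) will then give $\R^\N/\ell_2=E([u_k]_{k\in\N},(u_k))\le_B E(J,(e_n))=\R^\N/J$. Composition yields $\R^\N/\ell_p\le_B\R^\N/J$.

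\textbf{($\Rightarrow$)} Suppose $p>2$ and assume, for contradiction, that $\R^\N/\ell_p\le_B\R^\N/J$. The unit vector basis of $\ell_p$ being symmetric, Theorem \ref{symmetric} supplies a Borel reduction of $\R^\N/\ell_p$ to $E((\bigoplus_{n\ge 1}J^n)_2,(J^n))$. With $F_n=\{i/2^n:0\le i\le 2^n\}$, the restriction of $\R^\N/\ell_p$ to $\prod_n F_n$ contains $E_0(\prod_n F_n)$, so Lemma \ref{useful2} will produce an infinite $I\subseteq\N$, integers $l_n\ge 1$, and maps $H_n:F_n\to J^{l_n}$ ($n\in I$) whose additive concatenation reduces $E|_I$ to the F.D.D.\ relation. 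Concretely, for all $a,b\in\prod_{n\in I}F_n$,
\begin{equation*}
\sum_{n\in I}|a(n)-b(n)|^p<+\infty\iff\sum_{n\in I}\|H_n(a(n))-H_n(b(n))\|_J^2<+\infty.
\end{equation*}

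To extract a contradiction from the gap between $p>2$ and the $\ell_2$-type summation on the right, I set $\phi_n(c)=\|H_n(c)-H_n(0)\|_J$. The constant sequence $a(n)=1,\,b(n)=0$ forces $\sum_{n\in I}\phi_n(1)^2=+\infty$, which (after ruling out any infinite sub-collection on which $\phi_n(1)^2$ is summable by the equivalence applied to that sub-collection) will give $\liminf_{n\in I}\phi_n(1)>0$. For each dyadic $K=2^m$, I plan to apply the same reduction to pairs $a(n)=c_{i+1},\,b(n)=c_i$ with $c_i=i/K\in F_n$ (valid for $n\ge m$) to obtain an asymptotic bound $\|H_n(c_{i+1})-H_n(c_i)\|_J\lesssim K^{-p/2}$. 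The triangle inequality will then give
\begin{equation*}
\phi_n(1)\le\sum_{i=0}^{K-1}\|H_n(c_{i+1})-H_n(c_i)\|_J\lesssim K\cdot K^{-p/2}=K^{1-p/2},
\end{equation*}
and letting $K\to\infty$ (using $p>2$) drives $\phi_n(1)\to 0$ along an infinite sub-collection of $n\in I$, contradicting the lower bound. The main obstacle will be upgrading the equi-summability condition --- which is inherently a tail-along-sequences statement --- to the pointwise-in-$n$ H\"older-type bound $\|H_n(c_{i+1})-H_n(c_i)\|_J\lesssim K^{-p/2}$; I plan to iterate Lemma \ref{useful2} on sub-products where each $a(n)$ ranges over the two-point set $\{c_i,c_{i+1}\}$, or invoke a Baire-category genericity argument on $\prod_{n\in I}F_n$, to extract an infinite subset on which each $H_n$ carries a uniform H\"older-$p/2$ modulus of continuity.
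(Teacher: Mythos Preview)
Your $(\Leftarrow)$ direction is correct; it differs slightly from the paper's route, which goes through the F.D.D.\ side of Theorem~\ref{symmetric} (noting that $\R^\N/\ell_2\le_B E((\bigoplus_{n\ge 1}J^n)_2,(J^n))$ is trivial) rather than exhibiting a copy of $\ell_2$ inside $J$ directly, but both arguments are short and valid. In fact you do not even need Ma's theorem here: since $(u_k)$ is a block basis of $(e_n)$, the inequality $E(J,(u_k))\le_B E(J,(e_n))$ already follows from the block-basis remark in Section~2.

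Your $(\Rightarrow)$ direction, however, has a genuine gap that you yourself flag but do not close. The additive reduction furnished by Lemma~\ref{useful2} gives only a \emph{qualitative} equivalence of summability,
\[
\sum_{n\in I}|a(n)-b(n)|^p<\infty\ \Longleftrightarrow\ \sum_{n\in I}\bigl\|H_n(a(n))-H_n(b(n))\bigr\|^2<\infty,
\]
and from this one cannot extract the pointwise bound $\|H_n(c_{i+1})-H_n(c_i)\|\lesssim K^{-p/2}$ that your triangle-inequality step requires. Indeed, plugging in the constant pair $a\equiv c_{i+1}$, $b\equiv c_i$ makes the left-hand side infinite, so the equivalence tells you only that the right-hand side is infinite too --- a lower bound, the opposite of what you want. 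Restricting to two-point sub-products $\{c_i,c_{i+1}\}$ and re-applying Lemma~\ref{useful2} does not help: you again obtain an infinite index set on which the squared differences fail to be summable, i.e., another lower bound in the wrong direction. (Your argument that $\liminf_{n\in I}\phi_n(1)>0$ is correct precisely because this mechanism produces lower bounds.)

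Converting a tail-summability equivalence into a uniform H\"older modulus on the maps $H_n$ is a substantial step, not a routine Baire-category refinement; this is exactly the content of Theorem~4.8 and Lemma~5.1 of \cite{ding2}, which the paper invokes as a black box after passing via Theorem~\ref{symmetric} to the relation $E((J^n)_{n\in\N};2)\le_B E(J;2)$. Your proposal would need to reproduce that machinery, or supply an equivalent argument, rather than gesture at iterating Lemma~\ref{useful2}.
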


\begin{proof}
Because $\ell_p$ is symmetric, by Theorem \ref{symmetric}, we only need to consider $\R^\N/\ell_p\le_BE((\bigoplus_{n\ge 1}J^n)_2,(J^n))$. Since $\R^\N/\ell_2\le_BE((\bigoplus_{n\ge 1}J^n)_2,(J^n))$ is trivial, and by Dougherty-Hjorth's theorem, $\R^\N/\ell_p\le_B\R^\N/\ell_2$ for any $p\le 2$, we finish the ``$\Leftarrow$'' side.

For proving the ``$\Rightarrow$'' side, suppose $\R^\N/\ell_p\le_BE((\bigoplus_{n\ge 1}J^n)_2,(J^n))$. In Definition 3.2 of \cite{ding2}, the equivalence relation $E((\bigoplus_{n\ge 1}J^n)_2,(J^n))$ was denoted as $E((J^n)_{n\in\N};2)$. It is clear that $E((J^n)_{n\in\N};2)\le_BE(J;2)$, so we have $\R^\N/\ell_p\le_BE(J;2)$. From Theorem 4.8 and Lemma 5.1 of \cite{ding2}, we get $p\le 2$.
\end{proof}

For proving the following theorem, we need a notion of ultraproduct of Banach space. An ultrafilter $\mathfrak A$ on $\N$ is called free if it does not contain any finite set. Let $X$ be a Banach space. Consider the space $\ell_\infty(X)$ of all bounded sequences $\alpha\in X^\N$ with the norm $\|\alpha\|=\sup_n\|\alpha(n)\|$. Its subspace $N=\{\alpha:\lim_{\mathfrak A}\|\alpha(n)\|=0\}$ is closed. The ultraproduct $(X)_{\mathfrak A}$ is the quotient space $\ell_\infty(X)/N$ with the norm $\|(\alpha)_{\mathfrak A}\|_{\mathfrak A}=\lim_{\mathfrak A}\|\alpha(n)\|$. For more details on ultraproducts in Banach space theory, see \cite{heinrich}.

\begin{theorem}
$\R^\N/\ell_2<_BE((\bigoplus_{n\ge 1}J^n)_2,(J^n))<_B\R^\N/J$.
\end{theorem}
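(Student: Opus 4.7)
The plan is to handle the two strict inequalities separately.

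For the left inequality $\R^\N/\ell_2\le_B E((\bigoplus_{n\ge 1}J^n)_2,(J^n))$, I would define a direct reduction by placing each scalar $a(n-1)$ in the direction of a unit vector in $J^n$: explicitly, $\theta(a)(n)=a(n-1)e_0^{(n)}$ where $e_0^{(n)}\in J^n$ has James norm $1$. Then $\|\theta(a)(n)-\theta(b)(n)\|_J=|a(n-1)-b(n-1)|$, so summability in the $\ell_2$-sum matches $\ell_2$-summability of coordinates, and the coset relation is preserved.

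For the right inequality $E((\bigoplus_{n\ge 1}J^n)_2,(J^n))\le_B\R^\N/J$, the bound itself is already implicit in Section~6: the Lipschitz isomorphism $\Phi$ constructed just before Lemma~\ref{cube} gives $E((\bigoplus_{n\ge 1}J^n)_2,(J^n))\sim_B E(J,(e_{n_k^*}))\le_B\R^\N/J$. Strictness amounts to proving $\R^\N/J\not\le_B E((\bigoplus_{n\ge 1}J^n)_2,(J^n))$. Since each $J^n$ is finite-dimensional, $(\bigoplus_{n\ge 1}J^n)_2$ is reflexive, so its FDD $(J^n)$ is boundedly complete. Theorem~\ref{decomposition} then gives $E((\bigoplus_{n\ge 1}J^n)_2,(J^n))\le_B\R^\N/\ell_\infty$. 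Combined with the already-observed $\R^\N/J\not\le_B\R^\N/\ell_\infty$ (since the unit vector basis of $J$ is not boundedly complete, Corollary~\ref{fsigma}), a hypothetical reduction would compose to contradict $\R^\N/J\not\le_B\R^\N/\ell_\infty$.

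Strictness on the left, namely $E((\bigoplus_{n\ge 1}J^n)_2,(J^n))\not\le_B\R^\N/\ell_2$, is the hard part, and is where I would invoke the ultraproduct setup introduced immediately before the theorem. Suppose a Borel reduction exists; pick nested finite grids $F_n\subset J^n$ in the unit ball with mesh tending to $0$, restrict, and apply Lemma~\ref{useful2} to extract an additive reduction encoded by maps $H_k:F_{n_k}\to\ell_2^{l_k}$ along an infinite $I=(n_k)\subseteq\N$, with the key reduction property
$$\sum_k\|\alpha(n_k)-\beta(n_k)\|_{J^{n_k}}^2<\infty\iff\sum_k\|H_k(\alpha(n_k))-H_k(\beta(n_k))\|_2^2<\infty.$$
A further diagonal/Ramsey refinement should upgrade this coarse equivalence to uniform two-sided estimates $c\|v-w\|_J\le\|H_k(v)-H_k(w)\|_2\le C\|v-w\|_J$ on a ball of fixed radius in $J^{n_k}$, with $c,C>0$ independent of $k$. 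Passing to a free ultrafilter $\mathfrak A$ on $\N$, the family $(H_k)$ assembles into a bi-Lipschitz map $H_{\mathfrak A}:(J^{n_k})_{\mathfrak A}\to(\ell_2^{l_k})_{\mathfrak A}$. Since $(\ell_2^{l_k})_{\mathfrak A}$ is a Hilbert space while $J$ embeds isometrically into $(J^{n_k})_{\mathfrak A}$ via truncation $a\mapsto[(a\!\upharpoonright\!n_k)]_{\mathfrak A}$ (valid because $\|a\!\upharpoonright\!n\|_J\to\|a\|_J$ in $J$), we would obtain a bi-Lipschitz embedding of $J$ into a Hilbert space, contradicting the non-Hilbertian character of James' space.

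The main obstacle is precisely the upgrade from the coarse square-summability equivalence to uniform bi-Lipschitz estimates on balls of $J^{n_k}$: without this, the ultraproduct step does not yield a legitimate embedding. I expect this step to combine a perfectly-divergent-series argument in the style of Lemma~\ref{PD}, exploiting the Dvoretzky-Hanani dichotomy inside each finite-dimensional $\ell_2^{l_k}$, with a subsequence extraction that tames the additive constants coming from the reduction and localizes the maps $H_k$ on fixed-radius balls.
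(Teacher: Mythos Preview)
Your treatment of the right strict inequality and of the easy reduction $\R^\N/\ell_2\le_B E((\bigoplus_{n\ge 1}J^n)_2,(J^n))$ matches the paper's exactly.

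For the left strictness, the paper takes a shorter path than yours. Instead of starting from Lemma~\ref{useful2} and trying to upgrade an additive reduction to uniform bi-Lipschitz control on balls of $J^{n_k}$, the paper invokes an external result (Theorem~4.8 of \cite{ding2}) which says directly that a Borel reduction $E((J^n)_{n\in\N};2)\le_B E(\ell_2;2)$ forces $J$ to \emph{finitely Lipschitz embed} into $\ell_2$: there is a single constant $A$ and, for every finite $F\subseteq J$, a map $T_F:F\to\ell_2$ with $A^{-1}\|a-b\|_J\le\|T_F(a)-T_F(b)\|\le A\|a-b\|_J$. This is exactly the ``upgrade'' you flag as the main obstacle, already packaged as a theorem; your sketch via Dvoretzky--Hanani and subsequence extraction is plausible in spirit but is not carried out, and the paper simply does not attempt it here. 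From finite Lipschitz embeddability, both you and the paper pass to an ultraproduct to assemble a genuine Lipschitz embedding $\tilde T:J\to(\ell_2)_{\mathfrak A}$.

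There is one further point where the paper is sharper. A bi-Lipschitz (nonlinear) embedding of $J$ into a Hilbert space is not by itself an immediate contradiction; ``non-Hilbertian character'' is a linear notion. The paper closes the argument by citing Corollary~7.10 of \cite{BL}, which promotes a Lipschitz embedding into a reflexive space to a linear isomorphic embedding, and then appeals to the non-reflexivity of $J$. You should either cite the same result or otherwise justify why a bi-Lipschitz copy of $J$ cannot sit inside a Hilbert space.
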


\begin{proof}
We only need to prove $E((\bigoplus_{n\ge 1}J^n)_2,(J^n))\not\le_B\R^\N/\ell_2$ and $\R^\N/J\not\le_B E((\bigoplus_{n\ge 1}J^n)_2,(J^n))$. The second one is because that, by Theorem \ref{decomposition}, the equivalence relation $E((\bigoplus_{n\ge 1}J^n)_2,(J^n))$ is Borel reducible to $\R^\N/\ell_\infty$, while $\R^\N/J$ is not.

Assume for contradiction that $E((\bigoplus_{n\ge 1}J^n)_2,(J^n))\le_B\R^\N/\ell_2$. By Theorem 4.8 of \cite{ding2}, $J$ is finitely Lipschitz embeds (see Definition 4.7 of \cite{ding2}) into $\ell_2(\ell_2)\cong\ell_2$. Fix a sequence of finite subsets $(F_n)$ of $J$ such that
$$\{0\}\subseteq F_0\subseteq F_1\subseteq\cdots\subseteq F_n\subseteq\cdots$$
and $\bigcup_nF_n$ is dense in $J$. By the finitely Lipschitz embeddability, there exist $A>0$ and $T_n:F_n\to\ell_2$ satisfying, for $a,b\in F_n$,
$$A^{-1}\|a-b\|_J\le\|T_n(a)-T_n(b)\|\le A\|a-b\|_J.$$
Without loss of generality, we many assume that $T_n(0)=0$ for each $n$. Fix a free ultrafilter $\mathfrak A$ on $\N$. For any $a\in\bigcup_nF_n$, set $m=\min\{n:a\in F_n\}$. Since $\|T_n(a)\|\le A\|a\|_J$ for $n\ge m$, we can define
$$T(a)=(\overbrace{0,\cdots,0}^m,T_m(a),T_{m+1}(a),\cdots,T_n(a),\cdots)_\mathfrak A.$$
By the definition of the norm on $(\ell_2)_\mathfrak A$, it is easy to see that, for any $a,b\in\bigcup_nF_n$,
$$A^{-1}\|a-b\|_J\le\|T(a)-T(b)\|_\mathfrak A\le A\|a-b\|_J.$$
Since $\bigcup_nF_n$ is dense in $J$, $T$ can be extended to a Lipschitz embedding $\tilde T:J\to(\ell_2)_\mathfrak A$. Note that $(\ell_2)_\mathfrak A$ is still a Hilbert space (actually nonseparable, see, e.g., Proposition F.3 of \cite{BL}), so it is reflexive. By Corollary 7.10 of \cite{BL}, $J$ is isomorphic to a closed subspace of $(\ell_2)_\mathfrak A$. This is impossible, because $J$ is not reflexive.
\end{proof}

A well known generalization of James' space is $v_p^0$ for $p>1$ (cf. \cite{pisier}). For $a\in\R^\N$, denote
$$\begin{array}{ll}\|a\|=2^{-1/p}\sup[&\!\!\!\!\!|a(n_1)-a(n_2)|^p+|a(n_2)-a(n_3)|^p+\cdots\cr
&+|a(n_{m-1})-a(n_m)|^p+|a(n_m)-a(n_1)|^p]^{1/p},\end{array}$$
where the supremum taken over all choices of $m$ and $n_1<n_2<\cdots<n_m$. Define $v_p=\{a\in\R^\N:\|a\|<+\infty\}$ and $v_p^0=c_0\cap v_p$. Then $J=v_2^0$.

Similar proof gives
\begin{enumerate}
\item[(i)] Let $(x_n)$ be a symmetric basis of Banach space $X$. Then
$$E(X,(x_n))\le_B\R^\N/v_p^0\iff E(X,(x_n))\le_BE(\left(\bigoplus_{n\ge 1}v_p^n\right)_p,(v_p^n)).$$
\item[(ii)] For $q\ge 1$, we have $\R^\N/\ell_q\le_B\R^\N/v_p^0\iff q\le p$.
\item[(iii)] $\R^\N/\ell_p<_BE((\bigoplus_{n\ge 1}v_p^n)_p,(J^n))<_B\R^\N/v_p^0$.
\end{enumerate}

Furthermore, if we extend the definition of $v_p^0$ to $p=1$, the resulted space is just the space ${\rm bv}_0$. For symmetric basis $(x_n)$ of $X$, we still have
$$E(X,(x_n))\le_B\R^\N/{\rm bv}_0\iff E(X,(x_n))\le_BE(\left(\bigoplus_{n\ge 1}{\rm bv}_0^n\right)_1,({\rm bv}_0^n)).$$
Unlike the previous clause (iii), we have $E((\bigoplus_{n\ge 1}{\rm bv}_0^n)_1,({\rm bv}_0^n))\le_B\R^\N/\ell_1$. A desired Borel reduction $\theta$ defined as, for $\alpha\in\prod_{n\ge 1}{\rm bv}_0^n$,
$$\theta(\alpha)=(\alpha(0),\alpha(1)_0-\alpha(1)_1,\alpha(1)_1,\alpha(2)_0-\alpha(2)_1,\alpha(2)_1-\alpha(2)_2,\alpha(2)_2,\cdots).$$
Therefore, though $\R^\N/\ell_1<_B\R^\N/{\rm bv}_0$, we still have, for symmetric basis $(x_n)$ of $X$,
$$E(X,(x_n))\le_B\R^\N/{\rm bv}_0\iff E(X,(x_n))\le_B\R^\N/\ell_1.$$

\section{Further remarks}

Perhaps the most interesting question is:
\begin{question}
If $(x_n)$ and $(y_n)$ are two bases of a reflexive Banach space $X$, does $E(X,(x_n))\sim_BE(Y,(y_n))$?
\end{question}

Let us denote by $\rm SER$ the set of all Schauder equivalence relations. Ma \cite{ma} showed that, in $\rm SER$, there is a maximum element and $\R^\N/c_0$ is a minimal element with respect to Borel reducibility. Hjorth's dichotomy below $\R^\N/\ell_1$ (see Corollary 5.6 of \cite{hjorth}) implies that $\R^\N/\ell_1$ is another minimal element in $\rm SER$. If we restrict attention on two main classes of spaces $X$ whose unit vector basis is symmetric, i.e., Orlicz sequence spaces and Lorentz sequence spaces, we always have either $\R^\N/\ell_1\le_B\R^\N/X$ or $\R^\N/c_0\le_B\R^\N/X$. Therefore, our first question is:
\begin{question}
Let $(x_n)$ be a symmetric basis of $X$, does either $\R^\N/\ell_1\le_BE(X,(x_n))$ or $\R^\N/c_0\le_BE(X,(x_n))$ hold?
\end{question}
We say two elements $E,F\in{\rm SER}$ are incompatible in $\rm SER$, if no element in $\rm SER$ can be Borel reducible to both $E$ and $F$. It is well known that $\R^\N/\ell_1$ and $\R^\N/c_0$ form an incompatible pair. Ma \cite{ma} also indicated, Farah \cite{farah1} potentially proved that, for any $\alpha$-Tsirelson space $T_\alpha$, $\R^\N/T_\alpha$ are incompatible with either $\R^\N/\ell_1$ or $\R^\N/c_0$, furthermore, whenever $\alpha\ne\beta$, we have $\R^\N/T_\alpha$ and $\R^\N/T_\beta$ are incompatible.

Let $(x_n)$ be an unconditional basis of a Banach space $X$. James \cite{james} proved that, if $(x_n)$ is not boundedly complete, then there exists a block basis $(u_k)$ of $(x_n)$ such that $\coef(X,(u_k))=c_0$ (see also \cite{LT}, Theorem 1.c.10). Comparing with Corollary \ref{fsigma}, we get a dichotomy that, for unconditional basis $(x_n)$ of $X$, exactly one of the following holds:
\begin{enumerate}
\item[(i)] $E(X,(x_n))\le_B\R^\N/\ell_\infty$,
\item[(ii)] $\R^\N/c_0\le_BE(X,(x_n))$.
\end{enumerate}
This dichotomy cannot be generalized to conditional basis, since either $\R^\N/{\rm bv}_0$ or $\R^\N/J$ can serve as a counterexample. This is because clause (i) is equivalent to say that $\coef(X,(x_n))$ is $F_\sigma$, while clause (ii) implies it is ${\bf\Pi}^0_3$-complete, and ${\rm bv}_0$ and $J$ are both $D_2({\bf\Sigma}^0_2)$. Thus we ask two related questions:
\begin{question}
\begin{enumerate}
\item[(I)] Is there a basis $(x_n)$ of $X$ such that $\coef(X,(x_n))$ is ${\bf\Delta}^0_3$ but not $D_2({\bf\Sigma}^0_2)$?
\item[(II)] For any basis $(x_n)$ of $X$, if $\coef(X,(x_n))$ is a ${\bf\Pi}^0_3$-complete set in $\R^\N$, does $\R^\N/c_0\le_BE(X,(x_n))$?
\end{enumerate}
\end{question}

We can use an unconditional basis $(x_n)$ of $X$ to generate an ideal on $\N$. Denote ${\mathcal I}(X,(x_n))=\{A\subseteq\N:\sum_{n\in A}x_n\mbox{ converges}\}$. It is clear that $P(\N)/{\mathcal I}(X,(x_n))\le_BE(X,(x_n))$. Let $(x_n)$ and $(y_n)$ be unconditional bases of $X$ and $Y$ respectively, and suppose $E(X,(x_n))\le_BE(Y,(y_n))$. Applying Lemma \ref{useful2} on $\{0,1\}^\N$, we can find a subsequence $(x_{n_k})$ of $(x_n)$ and a block basis $(u_k)$ of $(y_n)$ such that ${\mathcal I}(X,(x_{n_k}))={\mathcal I}(Y,(u_k))$. Furthermore, for any block basis $(v_k)$ of $(x_n)$, we can find a subsequence of $(v_k)$ and a block basis of $(y_n)$ such that they generate the same ideal. Therefore, we may consider the following question:
\begin{question}
Let $(x_n)$ be an unconditional basis of $X$. To what extent can ideals generated by block bases of $(x_n)$ determine the equivalence relation $E(X,(x_n))$?
\end{question}
It is worth noting that block bases of the unit bases of any space $\ell_p$ (in fact, any Orlicz sequence space) generate the same class of ideals, though they generate totally different equivalence relations with respect to Borel reducibility.

While $(x_n)$ is a conditional basis of $X$, ${\mathcal I}(X,(x_n))$ is not an ideal in general. A powerful alternative tool is ${\mathcal S}(X,(x_n))=\{\epsilon\in\{-1,1\}^\N:\sum_n\epsilon(n)x_n\mbox{ converges}\}$. In fact, it is already used in proofs of theorems \ref{unconditional}, \ref{bv}, and Lemma \ref{PD}.

Besides unconditional bases, we are also interested in H.I. spaces (hereditarily indecomposable Banach spaces). Gowers \cite{gowers} proved that any basis of a Banach space has a block basis which is either unconditional or a basis of an H.I. subspace. Then another interesting question is:
\begin{question}
Let $(x_n)$ be an unconditional basis of $X$, $(y_n)$ a basis of an H.I. space $Y$. Is it possible that $E(X,(x_n))$ and $E(Y,(y_n))$ are compatible in $\rm SER$?
\end{question}
In contract, it is well known that, in this situation, no infinite dimensional Banach space can embed into both $X$ and $Y$.


\subsection*{Acknowledgments}
The main results of this paper were conducted while I visited Nanyang Technological University and Institute for Mathematical Sciences of the National University of Singapore. The article was written while I visited the University of North Texas. I would like to thank Guohua Wu, Chi Tat Chong, and Su Gao for invitations of these visits. I am grateful to Rui Liu, Xin Ma, Zhi Yin, Minggang Yu, and Yukun Zhang for conversations in seminars. Special thanks are due to Su Gao and Bunyamin Sari for useful suggestions.

\end{document}